\documentclass{preprint}
\usepackage{amsfonts}
\usepackage{amsmath}
\usepackage{amssymb}
\usepackage{wrapfig}
\usepackage{tikz}
\usepackage{times}
\usepackage{Symbols}
\usepackage{mhequ}
\usepackage{mhenvs}


\newtheorem{assumption}[lemma]{Assumption}

\setlength{\textheight}{19.5cm}
\setlength{\textwidth}{14.5cm}
\oddsidemargin1cm

\newcommand{\cM} {{ \mathcal M }} 
\newcommand{\cP} {{ \mathcal P }} 
\newcommand{\cC} {{ \mathcal C }} 

\newcommand{\cQ} {{ \mathcal Q }}
\newcommand{\cT} {{ \mathcal T }}
\newcommand{\PI} {\Pi}
\newcommand{\Norm}[1] {|\!|\!|#1|\!|\!|}
 
\newcommand{\Y } {  \mathbf{Y}  } 
\newcommand{\X } {  \mathbf{X}  } 
\newcommand{\ccdot} {\,\cdot\,} 

\def\CV{C_V}
\def\KV{K_{\!V}}
\def\Law{\mop{Law}}

\newcommand{\push } {_{\hbox{\scriptsize\#}}} 

\def\P{{\mathbf P}}
\def\Q{{\mathbf Q}}

\def\eps{\varepsilon}

\def\E{{\bf E}}

\def\C{{\cal C}}

\def\R{{\bf R}}
\def\dd{{\mathrm{d}}}
\def\e{\,{\mathrm{e}}}
\renewcommand{\one}{\mathbf{1}}

\renewcommand{\Z}{{\mathbb Z}}

\renewcommand{\N}{{\bf N}}

\def\TV{\mathrm{TV}}

\begin{document}

\title{{\bf Asymptotic coupling and a weak form of Harris' theorem
    \\with applications to stochastic delay equations}}

\author{HMS}

\author{ M. Hairer\inst{1,2}, 
  J. C. Mattingly\inst{3}, 
  M. Scheutzow\inst{4}  }

\institute{Mathematics Institute, The University
    of Warwick, Coventry CV4 7AL, UK \\ 
    \email{M.Hairer@Warwick.ac.uk}
     \and 
     Courant Institute, New York University, New York NY 10012, USA\\
    \email{Martin.Hairer@courant.nyu.edu}
    \and
   Department of Mathematics, Duke University,
    Durham NC 27708, USA\\
    \email{jonm@math.duke.edu}
    \and
     Institut f\"ur Mathematik, Sekr.~MA 7-5,
    Fakult\"at II -- Mathematik und Naturwissenschaften, TU Berlin, Str.~des 17.~Juni 136, 10623 Berlin,
    Germany\\
    \email{ms@math.tu-berlin.de}}

\maketitle

\begin{abstract}\parindent1em
  There are many Markov chains on infinite dimensional spaces whose one-step transition kernels are mutually singular when starting from different initial conditions.  We give results which prove unique ergodicity under minimal assumptions on one hand and the existence of a spectral gap under conditions reminiscent of Harris' theorem.

  The first uses the existence of couplings which draw the solutions together as time goes to infinity. Such ``asymptotic couplings'' were central to \cite{EMS,MatNS,H,BM} on which this work builds. As in \cite{BM} the emphasis here is on stochastic differential delay equations.
 
  Harris' celebrated theorem states that if a Markov chain admits a
  Lyapunov function whose level sets are ``small'' (in the sense that
  transition probabilities are uniformly bounded from below), then it
  admits a unique invariant measure and transition probabilities
  converge towards it at exponential speed. This convergence takes
  place in a total variation norm, weighted by the Lyapunov function.

  A second aim of this article is to replace the notion of a ``small set'' by
  the much weaker notion of a ``$d$-small set,'' which takes the
  topology of the underlying space into account via a distance-like
  function $d$. With this notion at hand, we prove an analogue to
  Harris' theorem, where the convergence takes place in a
  Wasserstein-like distance weighted again by the Lyapunov function.

  This abstract result is then applied to the framework of stochastic
  delay equations. In this framework, the usual theory of Harris
  chains does not apply, since there are natural examples for which
  there exist \textit{no} small sets (except for sets consisting of
  only one point). This gives a solution to the long-standing open
  problem of finding natural conditions under which a stochastic delay
  equation admits at most one invariant measure and transition
  probabilities converge to it.
\end{abstract}

\bigskip

\noindent \textit{Keywords}: Stochastic delay equation, invariant
measure, Harris' theorem, weak convergence, spectral gap, asymptotic
coupling.

\section{Introduction}

There are many Markov chains on infinite dimensional spaces whose one-step transition kernels are mutually singular when starting from 
different initial conditions. Many standard techniques used in the study of Markov chains as exposed for example in \cite{MT} can not be applied to such a singular setting.  In this article, we provide two sets of results which can be applied to general Markov processes even in such a singular 
settings. The first set of results gives minimal, verifiable conditions which are equivalent to the existence of at most one invariant measure.  The second set of results gives a weak version of Harris' theorem which proves the existence of a spectral gap under the existence of a Lyapunov function and a modified ``small set'' condition.

The study of the ergodic theory for stochastic partial differential
equations (SPDEs) has been one of the principal motivations to develop
this theory. While even simple, formally elliptic, linear SPDEs can have transition
probabilities which are mutually singular, the bulk of recent work has
been motivated by equations driven by noise which is ``degenerate''
to varying degrees \cite{EH,BKL,EMS,KS,MatNS,HM,Gap}. The current article
focuses on stochastic delay differential equations (SDDEs) and makes
use of the techniques developed in the SPDE context. That the SPDE
techniques are applicable to the SDDE setting is not surprising since
\cite{EMS} reduced the original SPDE, the stochastic Navier-Stokes
equations, to an SDDE to prove unique ergodicity. In \cite{BM}, the
same ideas were applied directly to SDDEs. There the emphasis was on
additive noise, here we generalize the results to the setting of state
dependent noise. The works \cite{EMS,MatNS,H,BM} all share the central idea
of using a shift in the driving Wiener process to force solutions
starting at different initial conditions together asymptotically as
time goes to infinity.  In \cite{EMS,MatNS,BM}, the asymptotic
coupling was achieved by driving as subset of the degrees of freedom
together in finite time. Typically these were the dynamically unstable
directions, which ensured the remaining degrees of freedom would
converge to each other asymptotically. In \cite{H,HM} the unstable
directions were only stabilized sufficiently by shifting the driving
Wiener processes to ensure that all of the degrees of freedom
converged together asymptotically. This broadens the domain of
applicability and is the tact taken in Section~\ref{sec:absErgodic} to
prove a very general theorem which gives verifiable conditions which
are equivalent to unique ergodicity. In particular, this result
applies to the setting when the transition probabilities are mutually
singular for many initial conditions.

 A simple, instructive example which motivates our discussion  is the following SDDE: 
\begin{equation}\label{sdde}
\dd X(t) = -c X(t)\,\dd t + g(X(t-r))\,\dd W(t)\;,
\end{equation}
where $r > 0$, $W$ is a standard Wiener process, $c > 0$, and $g: \R\to \R$ is a strictly positive, bounded and strictly increasing function.  This can be viewed as a Markov process $\{X_t\}_{t\ge 0}$ on the space $\X = \C([-r,0], \R)$ which possesses an invariant measure for sufficiently large $c$. 
However, in this particular case, given the solution $X_t$ for any $t>0$, the initial condition $X_0 \in \X$ can be recovered \textit{with probability one}, exploiting the law of the iterated logarithm for Brownian motion (see \cite{S05}, Section 2). Thus, if the initial conditions in $\CC([-r,0],\R)$ do not agree, then the transition probabilities for any step of this chain are always mutually singular. In particular, the corresponding Markov semigroup does not have the strong Feller
property and, even worse, the only ``small sets'' for this system are those consisting of one single point. 
The results in Section~\ref{sec:absErgodic} nevertheless apply and allow us to show that \eref{sdde} can have at most one invariant measure and
that converges toward it happens at exponential rate.

While the main application considered in this article is that of stochastic delay equations, the principal theorems are also applicable to a large class of stochastic PDEs driven by degenerate noise. In particular, Theorem~5.4 in \cite{Ergodic} yields a very large class of degenerate SPDEs (essentially semilinear SPDEs with polynomial nonlinearities driven by additive noise, satisfying a H\"ormander condition) for which it is possible to find a contracting distance $d$, see Section~\ref{sec:contractSPDE} below.

\subsection{Overview of main results}

We now summarise the two principal results of this article. The first
is an abstract ergodic theorem which is useful in a number of
different settings and gives conditions equivalent to unique
ergodicity. The second result gives a weak version of Harris' theorem
which ensures the existence of a spectral gap if there exists an
appropriate Lyapunov function.

\subsubsection{Asymptotic coupling and unique ergodicity} 
Let $\X$ be a Polish space with metric $d$ and let
$\X^\infty=\X^{\N_0}$ be the associated space of one-sided infinite
sequences. Given a Markov transition kernel $\CP$ on $\X$, we will
write $\CP_{[\infty]}:\X \to \cM(\X^\infty)$ as the probability kernel
defined by stepping with the Markov kernel $\CP$. Here
$\cM(\X^\infty)$ is the space of probability measures on $\X^\infty$. If $\mu$ is a probability measure on $\X$, then we write $\CP_{[\infty]}\mu$ for the measure in $\cM(\X^\infty)$ defined by  $\int_\X \CP_{[\infty]}(x, \ccdot)\mu(dx)$ .

In general, we will denote by $\cM(\Y)$ the set of probability
measures over a Polish space $\Y$.  Given $\mu_1, \mu_2 \in \cM(\Y)$,
$\cC(\mu_1,\mu_2)$ will denote the set of all couplings of the two
measures. Namely, \begin{equation*} \cC(\mu_1,\mu_2)=\Big\{ \Gamma \in
  \cM(\Y \times \Y) : \PI^{(i)}\push\Gamma = \mu_i \text{ for $i=1,2$}
  \Big\},
\end{equation*}
where $\PI^{(i)}$ is the projection defined by
$\PI^{(i)}(y_1,y_2)=y_i$ and $f\push \mu$ is the push-forward of the
measure $\mu$ defined by $(f\push \mu)(A)=\mu(f^{-1}(A))$. We define
the \textit{diagonal at infinity}
\begin{align*}
  \mathcal{D}=\Big\{ (x^{(1)},x^{(2)}) \in \X^\infty \times \X^\infty:
  \lim_{n\rightarrow \infty} d(x^{(1)}_n ,x^{(2)}_n) =0 \Big\}
\end{align*}
as the set of paths which converge to each other asymptotically. Given
two measures $m_1$ and $m_2$ on $\X^\infty$, we say that $\Gamma \in
\CC(m_1,m_2)$ is an \textit{asymptotic coupling} of $m_1$ and $m_2$ if
$\Gamma(\mathcal{D})=1$.

It is reasonable to expect that if two invariant measure $\mu_1$ and
$\mu_2$ are such that there exists an asymptotic coupling of
$\cP_{[\infty]} \mu_1$ and $\cP_{[\infty]} \mu_2$ then in fact
$\mu_1=\mu_2$. We will see that on the infinite product structure a
seemingly weaker notion is sufficient to prove $\mu_1=\mu_2$.

To this end we define
\begin{equ}[e:defcoupl]
  \widetilde \cC(\mu_1,\mu_2) = \Big\{ \Gamma \in \cM(\Y \times \Y) :
  \PI^{(i)}\push\Gamma \ll \mu_i \text{ for $i=1,2$} \Big\},
\end{equ}
where $ \PI^{(i)}\push\Gamma \ll \mu_i$ means that $\PI^{(i)}\push\Gamma$ is absolutely
continuous with respect to $\mu_i$.

To state the main results of this section, we recall that an invariant
measure $\mu$ for $\cP$ is said to be ergodic if for any invariant
$\phi:\X \rightarrow \R$ ($\cP\phi=\phi$), $\phi$ is $\mu$-almost
surely constant.  
\begin{theorem}
  \label{thm:mainSimpleErgodic} 
  Let $\cP$ be a Markov operator on a Polish space $\X$ admitting two
  ergodic invariant measures $\mu_1$ and $\mu_2$.  The following
  statements are equivalent:
  \begin{enumerate}
  \item \label{mainSimpleErgodic:1} $\mu_1 = \mu_2$.
  \item \label{mainSimpleErgodic:2}There exists an asymptotic coupling of
    $\cP_{[\infty]} \mu_1$ and $\cP_{[\infty]} \mu_2$.
  \item \label{mainSimpleErgodic:3} There exists $\Gamma \in \widetilde \cC(
    \cP_{[\infty]}\mu_1,\cP_{[\infty]}\mu_2)$ such that $\Gamma(\mathcal{D}) > 0$.
  \end{enumerate}
\end{theorem}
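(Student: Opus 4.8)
The plan is to prove the cycle $(\ref{mainSimpleErgodic:1})\Rightarrow(\ref{mainSimpleErgodic:2})\Rightarrow(\ref{mainSimpleErgodic:3})\Rightarrow(\ref{mainSimpleErgodic:1})$, the first two implications being essentially immediate. For $(\ref{mainSimpleErgodic:1})\Rightarrow(\ref{mainSimpleErgodic:2})$: if $\mu_1=\mu_2=:\mu$, the push-forward of $\cP_{[\infty]}\mu$ under the diagonal map $x\mapsto(x,x)$ is a coupling of $\cP_{[\infty]}\mu$ with itself concentrated on $\{(x,x):x\in\X^\infty\}\subseteq\mathcal{D}$, hence an asymptotic coupling. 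For $(\ref{mainSimpleErgodic:2})\Rightarrow(\ref{mainSimpleErgodic:3})$: any asymptotic coupling $\Gamma$ of $\cP_{[\infty]}\mu_1$ and $\cP_{[\infty]}\mu_2$ has $\PI^{(i)}\push\Gamma=\cP_{[\infty]}\mu_i$, so in particular $\PI^{(i)}\push\Gamma\ll\cP_{[\infty]}\mu_i$, and $\Gamma(\mathcal{D})=1>0$, so $\Gamma$ already witnesses (\ref{mainSimpleErgodic:3}).

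The substance is $(\ref{mainSimpleErgodic:3})\Rightarrow(\ref{mainSimpleErgodic:1})$, for which I would rely on two facts. First, since $\mu_i$ is an ergodic invariant measure for $\cP$, the shift $\theta$ on $\X^\infty$ is ergodic with respect to $\cP_{[\infty]}\mu_i$ --- the standard equivalence between ergodicity of an invariant measure and ergodicity of the associated stationary path measure --- so that Birkhoff's ergodic theorem applies to $\cP_{[\infty]}\mu_i$. Second, any event of full measure under $\cP_{[\infty]}\mu_i$ also has full measure under every $\nu\ll\cP_{[\infty]}\mu_i$. Now fix a countable family $\{\phi_k\}_{k\ge 1}$ of bounded Lipschitz functions on $(\X,d)$ which is convergence-determining for the weak topology on $\cM(\X)$. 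Applying Birkhoff's theorem to the bounded functions $x\mapsto\phi_k(x_0)$ on $\X^\infty$, and using that $\cP_{[\infty]}\mu_i$ is $\theta$-invariant with zeroth coordinate marginal $\mu_i$, yields that for $\cP_{[\infty]}\mu_i$-almost every $x\in\X^\infty$,
\begin{equ}
  \frac1N\sum_{n=0}^{N-1}\phi_k(x_n)\;\longrightarrow\;\int_\X\phi_k\,\dd\mu_i\qquad\text{for every }k\ge 1.
\end{equ}

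Let $\Gamma$ be as in (\ref{mainSimpleErgodic:3}) and set $\Gamma_i=\PI^{(i)}\push\Gamma\ll\cP_{[\infty]}\mu_i$. By the second fact above, the displayed convergence holds $\Gamma_1$-almost surely in the first coordinate and $\Gamma_2$-almost surely in the second, so there is a measurable set $G\subseteq\X^\infty\times\X^\infty$ with $\Gamma(G)=1$ such that for every $(x,y)\in G$ one has $\tfrac1N\sum_{n<N}\phi_k(x_n)\to\int_\X\phi_k\,\dd\mu_1$ and $\tfrac1N\sum_{n<N}\phi_k(y_n)\to\int_\X\phi_k\,\dd\mu_2$ for all $k$. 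Since $\Gamma(\mathcal{D})>0$, the set $G\cap\mathcal{D}$ is nonempty; pick $(x,y)$ in it. Then $d(x_n,y_n)\to 0$, hence $|\phi_k(x_n)-\phi_k(y_n)|\le\mathrm{Lip}(\phi_k)\,d(x_n,y_n)\to 0$, so the Ces\`aro averages of $\phi_k(x_n)$ and of $\phi_k(y_n)$ have the same limit and therefore $\int_\X\phi_k\,\dd\mu_1=\int_\X\phi_k\,\dd\mu_2$ for every $k$. Since $\{\phi_k\}$ is convergence-determining, $\mu_1=\mu_2$.

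The only step requiring genuine care is the first fact above: deducing ergodicity of the shift on $(\X^\infty,\cP_{[\infty]}\mu_i)$ from ergodicity of $\mu_i$ in the sense defined before the theorem. This is classical for stationary Markov chains but should be cited or given a short argument (by relating $\theta$-invariant events to $\cP$-invariant functions through $x\mapsto\cP_{[\infty]}(x,A)$). The remaining points are routine: measurability of $\mathcal{D}$ and of the Birkhoff exceptional sets, the existence of a countable convergence-determining family of bounded Lipschitz functions on a Polish space, and the fact that the Ces\`aro average of a null sequence is null. Conceptually, the reason the weakened hypothesis of (\ref{mainSimpleErgodic:3}) --- absolute continuity of the marginals rather than equality, and positive rather than full mass on $\mathcal{D}$ --- still suffices is that time averaging along the chain washes out the Radon--Nikodym densities $\dd\Gamma_i/\dd(\cP_{[\infty]}\mu_i)$, so that a single asymptotically coupled pair of generic trajectories already identifies $\mu_1$ with $\mu_2$.
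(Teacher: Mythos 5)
Your proposal is correct and follows essentially the same route as the paper: the easy implications are handled identically, and the implication $(\ref{mainSimpleErgodic:3})\Rightarrow(\ref{mainSimpleErgodic:1})$ uses exactly the paper's argument of shift-ergodicity of $\cP_{[\infty]}\mu_i$, Birkhoff's theorem for $\phi(x_0)$ with $\phi$ bounded Lipschitz, absolute continuity to transfer the full-measure Birkhoff sets to the marginals of $\Gamma$, and a single pair in the intersection with $\mathcal D$ to identify the two spatial averages. The only cosmetic difference is that you intersect over a countable convergence-determining family of Lipschitz functions at once, while the paper runs the argument for each bounded Lipschitz $\phi$ separately; both are fine.
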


\begin{remark}
In \eref{e:defcoupl}, we could have replaced absolute continuity by equivalence. If we have a `coupling'
$\Gamma$ satisfying the current condition, the measure ${1\over 2} \bigl(\Gamma + \cP_{[\infty]} \mu_1\otimes \cP_{[\infty]} \mu_2\bigr)$ satisfies that stronger condition.
\end{remark}

\begin{remark}
  At first, it might seem surprising that it is sufficient to have an
  asymptotically coupling measure which is only \textit{equivalent}
  and not \textit{equal} to the law of the Markov process. However, it
  is important to recall that equivalence on an infinite time horizon
  is a much stronger statement than on a finite one. The key
  observation is that the time average of any function along a typical
  infinite trajectory gives almost surely the value of the integral of the function
  against some invariant measure. This was the key fact used in
  related results in \cite{EMS} and will be central to the proof
  below.
\end{remark}

\begin{remark}
This theorem was formulated in discrete time for simplicity. Since it only
concerns uniqueness of the invariant measure, this is not a restriction since
one can apply it to a continuous time system simply by subsampling it at integer times.
\end{remark}

\subsubsection{A weak version of Harris' Theorem}
\label{sec:harris} 
We now turn to an extension of the usual Harris
theorem on the exponential convergence of Harris chains under a
Lyapunov condition. Recall that, given a Markov semigroup
$\{\CP_t\}_{t \ge 0}$ over a measurable space $\X$, a measurable
function $V \colon \X \to \R_+$ is called a \textit{Lyapunov function}
for $\CP_t$ if there exist strictly positive constants $\CV, \gamma,
\KV$ such that
\begin{equ}
\CP_t V(x) \le \CV e^{-\gamma t} V(x) + \KV\;,
\end{equ}
holds for every $x \in \X$ and every $t \ge 0$. Another omnipresent
notion in the theory of Harris chains is that of a \textit{small
  set}. Recall that $A \subset \CX$ is small for a Markov operator
$\CP_t$ if there exists $\delta > 0$ such that
\begin{equ}[e:smallset]
\|\CP_t(x,\cdot\,) - \CP_t(y,\cdot\,)\|_\TV \le 1-\delta\;,
\end{equ}
holds for every $x,y \in A$.\footnote{In this article, we normalise
  the total variation distance in such a way that mutually singular
  probability measures are at distance $1$ of each other. This differs
  by a factor $2$ from the definition sometimes found in the
  literature.} (This is actually a slightly weaker notion of a small
set than that found in \cite{MT}, but it turns out to be sufficient
for the results stated in this article.) With these definitions at
hand, Harris' theorem states that:

\begin{theorem}
 \label{thm:harrisFirst}
  Let $\{\CP_t\}_{t \ge 0}$ be a Markov semigroup over a measurable
  space $\X$ admitting a Lyapunov function $V$ and a time
  $t_\star > \gamma^{-1} \log \CV$ such that the level sets $\{x \in
  \X\,:\, V(x) \le C\}$ are small for $\CP_{t_\star}$ for every
  $C>0$. Then, there exists a unique probability measure $\mu_\star$
  on $\X$ that is invariant for $\CP_t$. Furthermore, there exist
  constants $\tilde C>0$ and $\tilde \gamma > 0$ such that the
  transition probabilities $\CP_t(x,\cdot\,)$ satisfy
\begin{equ}
  \|\CP_t(x,\cdot\,) - \mu_\star\|_\TV \le \tilde C
  \bigl(1+V(x)\bigr)e^{-\tilde \gamma t}\;,
\end{equ}
for every $t \ge 0$ and every $x \in \X$.
\end{theorem}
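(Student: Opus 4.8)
The plan is to reduce the statement to a single Markov operator and prove a contraction in a $V$-weighted Wasserstein distance, following the coupling-free argument of Hairer--Mattingly. Write $\CP := \CP_{t_\star}$ and set $\eta := \CV e^{-\gamma t_\star}$, which is strictly less than $1$ by the hypothesis $t_\star > \gamma^{-1}\log\CV$. For a parameter $\beta > 0$ to be fixed later, introduce the distance-like function $d_\beta(x,y) = \one_{x\neq y}\,(2 + \beta V(x) + \beta V(y))$ on $\X$ and let $\mathcal W_\beta$ be the associated Wasserstein-$1$ distance, $\mathcal W_\beta(\mu,\nu) = \inf_{\Gamma\in\cC(\mu,\nu)}\int d_\beta\,d\Gamma$. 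This $d_\beta$ is tailored so that, on one hand, $d_\beta \ge \one_{x\neq y}$, so that $\mathcal W_\beta$ dominates the (normalised) total variation distance; and, on the other hand, $d_\beta(x,y) \le 2 + \beta V(x) + \beta V(y)$, so that $\mathcal W_\beta(\mu,\nu) \le 2 + \beta\mu(V) + \beta\nu(V)$. These two features are exactly what is needed to pass from a contraction to the stated conclusion.

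The core of the argument is to show that there are $\beta > 0$ and $\bar\alpha \in (0,1)$ with
\begin{equation*}
 \mathcal W_\beta(\CP(x,\cdot),\CP(y,\cdot)) \le \bar\alpha\, d_\beta(x,y) \qquad\text{for all } x,y\in\X .
\end{equation*}
I would prove this by distinguishing, for a large radius $R$, the regime $V(x) + V(y) > R$ from the regime where $x,y$ both lie in $\{V \le R\}$. In the first regime one uses the product coupling together with the Lyapunov bound: $\mathcal W_\beta(\CP(x,\cdot),\CP(y,\cdot)) \le 2 + \beta\CP V(x) + \beta\CP V(y) \le 2 + \beta\eta(V(x)+V(y)) + 2\beta\KV$, which, divided by $d_\beta(x,y) = 2 + \beta(V(x)+V(y))$, is bounded away from $1$ as soon as $R > 2\KV/(1-\eta)$ --- a threshold that does \emph{not} involve $\beta$. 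In the second regime one invokes the small-set hypothesis \eref{e:smallset}: it yields a coupling $\Gamma$ of $\CP(x,\cdot)$ and $\CP(y,\cdot)$ putting mass at least $\delta$ on the diagonal, where $d_\beta$ vanishes, while on its complement $d_\beta(x',y') \le 2 + \beta V(x') + \beta V(y')$; hence $\int d_\beta\,d\Gamma \le 2(1-\delta) + \beta\CP V(x) + \beta\CP V(y) \le 2(1-\delta) + 2\beta\eta R + 2\beta\KV$, and since $d_\beta(x,y) \ge 2$ here, the ratio is at most $(1-\delta) + \beta(\eta R + \KV)$, which is $<1$ once $\beta < \delta/(\eta R + \KV)$. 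Fixing first $R$ (in terms of $\KV,\eta$) and then $\beta$ (in terms of $R,\delta,\KV$) produces a common rate $\bar\alpha < 1$.

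The remainder is routine. The space of probability measures on $\X$ is complete for total variation; since $\mathcal W_\beta$ dominates it and $\CP^n(x,\cdot)$ has uniformly bounded $V$-moment by the Lyapunov bound, the $\mathcal W_\beta$-Cauchy sequence $\CP^n(x,\cdot)$ (Cauchy by the contraction) has a limit $\mu_\star$ with $\mu_\star(V)<\infty$, necessarily the unique $\CP$-invariant measure (any two invariant measures have finite $V$-moment, so the contraction forces them to coincide). Iterating the contraction, $\mathcal W_\beta(\CP^n(x,\cdot),\mu_\star) \le \bar\alpha^n\,\mathcal W_\beta(\delta_x,\mu_\star) \le \bar\alpha^n\,(2 + \beta V(x) + \beta\mu_\star(V))$, and since $\mathcal W_\beta$ dominates total variation this gives $\|\CP^n(x,\cdot) - \mu_\star\|_\TV \le \tilde C(1+V(x))\bar\alpha^n$. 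For a general time $t$, note $\CP_s\mu_\star = \mu_\star$ for all $s\ge 0$ (as $\CP_s\mu_\star$ is again $\CP_{t_\star}$-invariant), so for $t = n t_\star + s$ with $0\le s < t_\star$ one has $\|\CP_t(x,\cdot) - \mu_\star\|_\TV = \|\CP_s\CP_{n t_\star}(x,\cdot) - \CP_s\mu_\star\|_\TV \le \|\CP_{n t_\star}(x,\cdot) - \mu_\star\|_\TV$, and using $n \ge t/t_\star - 1$ one absorbs the loss into fresh constants $\tilde C,\tilde\gamma > 0$.

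The step I expect to be the main obstacle is the Wasserstein contraction, and within it the issue of making the two regimes contract at a \emph{common} rate: the small-set regime provides only an additive gain of size $\sim\delta$ against a $d_\beta(x,y)$ that can be as large as $2 + 2\beta R$, forcing $\beta$ small, while the large-$V$ regime needs $R$ large; the resolution is precisely the observation that the threshold $R > 2\KV/(1-\eta)$ is independent of $\beta$, so $R$ is chosen first and $\beta$ afterwards. A secondary technical point to state carefully is the completeness argument for $\mu_\star$ (using that $\mathcal W_\beta$ dominates total variation together with the uniform $V$-moment bound), together with the lower-semicontinuity of $\mathcal W_\beta$ needed to identify the limit; both are standard, but should not be skipped.
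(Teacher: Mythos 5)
Your argument is correct, and it is worth being clear about what it is being compared to: the paper never proves Theorem~\ref{thm:harrisFirst} directly --- it cites \cite{MT,Harris} and, internally, recovers it as the special case $d=d_\TV$ of the weak Harris theorem of Section~\ref{sec:Harris} (Theorem~\ref{theo:HarrisWeak}), since $d_\TV$ is trivially contracting and ``small'' coincides with ``$d_\TV$-small''. Your proof is essentially the direct argument of the cited note \cite{Harris}: a contraction for $\CP_{t_\star}$ in the distance $\one_{x\neq y}(2+\beta V(x)+\beta V(y))$, proved by splitting into the Lyapunov regime $V(x)+V(y)>R$ and the small-set regime, with $R$ fixed first and $\beta$ last. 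Compared with the paper's internal route this buys three things: only two regimes are needed (the ``close to each other'' step of Theorem~\ref{theo:HarrisWeak} is vacuous for $d_\TV$); no Cauchy--Schwarz, so the gap comes out in the $(1+V)$-weighted norm rather than the $(1+\sqrt V)$-weighted one (the paper's remark after Theorem~\ref{theo:HarrisWeak} makes exactly this point); and existence of $\mu_\star$ via completeness of total variation rather than via the corollary requiring a complete metric and the Feller property --- which matters here because the theorem is stated over a bare measurable space, where the machinery of Section~\ref{sec:Harris} (Polish space, lower semicontinuous $d$) is not directly available.

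A few places to tighten. First, you use the contraction at the level of measures (for uniqueness, and when iterating against $\mu_\star$), but you only prove it for Dirac masses; the cleanest fix, which also disposes of coupling-measurability worries (on a general measurable space the diagonal of $\X\times\X$ need not even be measurable, so ``mass on the diagonal'' is delicate), is to note that for this particular $d_\beta$ the lifted distance is just the weighted total variation $\int (1+\beta V)\,\dd|\mu-\nu|$ up to a factor $2$, and to run the whole contraction in that linear norm, as in \cite{Harris}. Second, in the Lyapunov regime the contraction factor does depend on $\beta$ (it tends to $1$ as $\beta\to0$); only the positivity threshold $R>2\KV/(1-\eta)$ is $\beta$-free. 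Since $\beta$ is chosen last and each regime's factor is strictly below $1$ for the fixed pair $(R,\beta)$, your order of choices is fine, but the phrasing should reflect this. Third, the claim that every invariant measure has finite $V$-moment deserves its one-line truncation argument ($\int V\wedge M\,\dd\mu=\int \CP^n(V\wedge M)\,\dd\mu\le \eta^n\int V\wedge M \,\dd\mu+\KV/(1-\gamma t_\star\text{-rate})$, then $n\to\infty$, $M\to\infty$), since it is what licenses applying the contraction to a pair of invariant measures.
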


\begin{remark}
  The convergence actually takes place in a stronger total variation
  norm weighted by $V$, in which the Markov semigroup then admits a
  spectral gap, see \cite{MT,Harris}.
\end{remark}

While this result has been widely applied in the study of the long-time behaviour of Markov processes \cite{MT}, it does not seem to 
be very suitable for the study of infinite-dimensional evolution equations because the notion of small sets requires that the transition 
measure not be mutually singular for nearby points.

This suggests that one should seek for a version of Harris' theorem
that makes use of a relaxed notion of a small set, allowing for
transition probabilities to be mutually singular. To this effect, we
will introduce the notion of a \textit{$d$-small set} for a given
function $d \colon \X \times \X \to [0,1]$ used to measure distances
between transition probabilities. This will be the content of
Definition~\ref{def:dsmall} below.  If we lift $d$ to the space of
probability measures in the same way that one defines Wasserstein-$1$
distances, then this notion is just \eref{e:smallset} with the total
variation distance replaced by $d$.

However, we can of course not use any distance function $d$ and expect
to obtain a convergence result by combining it simply with Lyapunov
stability.  We therefore introduce the concept of a distance $d$ that
is \textit{contracting} for $\CP_t$ if there exists $\alpha < 1$ such
that
\begin{equ}[e:contractd2]
d \bigl(\CP_t(x,\cdot\,), \CP_t(y,\cdot\,)\bigr) \le \alpha d(x,y)\;,
\end{equ} 
holds for any two points $x, y\in \X$ such that $d(x,y) < 1$. This
seems to be a very stringent condition at first sight (one has the
impression that \eref{e:contractd2} alone is already sufficient to
give exponential convergence of $\CP_t \mu$ to $\mu_\star$ when
measured in the distance $d$), but it is very important to note that
\eref{e:contractd2} is \textit{not} assumed to hold when $d(x,y) =
1$. Therefore, the interesting class of distance functions $d$ will
consist of functions that are equal to $1$ for ``most'' pairs of points
$x$ and $y$. Compare this with the fact that the total variation distance
can be viewed as the Wasserstein-$1$ distance corresponding to the trivial metric
that is equal to $1$ for any two points that are not identical.

With these definitions at hand, a slightly simplified version of our
main abstract theorem states that:

\begin{theorem}\label{thm:main}
  Let $\{\CP_t\}_{t \ge 0}$ be a Markov semigroup over a Polish space
  $\X$ that admits a Lyapunov function $V$.  Assume furthermore that
  there exists $t_\star > \gamma^{-1}\log \CV$ and a lower
  semi-continuous metric $d \colon \X\times \X \to [0,1]$ such that
\begin{claim}
\item $d^2$ is contracting for $\CP_{t_\star}$,
\item level sets of $V$ are $d$-small for $\CP_{t_\star}$.
\end{claim}
Then there exists a unique invariant measure $\mu_\star$ for $\CP_t$ and the convergence 
$d(\CP_t(x,\cdot\,), \mu_\star) \to 0$ is exponential for every $x \in \X$.
\end{theorem}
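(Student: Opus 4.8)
The plan is to introduce a ``distance-like'' function on $\X$ that simultaneously feels the metric $d$ and the Lyapunov function $V$, to show that $\CP_{t_\star}$ is a strict contraction in the associated Wasserstein-$1$ cost, and then to read off uniqueness and the exponential rate from that contraction. For a parameter $\beta>0$ to be chosen later, set $\tilde d(x,y):=\sqrt{d(x,y)\bigl(1+\beta V(x)+\beta V(y)\bigr)}$ (and $\tilde d(x,x):=0$), and let $W_{\tilde d}(\mu,\nu):=\inf_{\Gamma}\int \tilde d(u,v)\,\Gamma(du,dv)$ be the corresponding transport cost. Two elementary facts will be used repeatedly: $\tilde d$ is lower semi-continuous (inherited from $d$, and from $V$ which we may take lower semi-continuous), and $d(x,y)\le\tilde d(x,y)$ because $d\le1$, so $W_d\le W_{\tilde d}$. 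Write $\CP:=\CP_{t_\star}$, $\gamma_0:=\CV e^{-\gamma t_\star}<1$ (this is where $t_\star>\gamma^{-1}\log\CV$ enters), $K:=\KV$; the Lyapunov bound at time $t_\star$ reads $\CP V\le\gamma_0 V+K$.

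The core step is the estimate $W_{\tilde d}(\CP\mu,\CP\nu)\le\bar\alpha\,W_{\tilde d}(\mu,\nu)$ for some $\bar\alpha<1$ and all $\mu,\nu$ with finite $V$-integral. By a standard gluing argument --- choosing an optimal coupling of $\mu$ and $\nu$ and, measurably in $(x,y)$, near-optimal couplings of $\CP(x,\cdot)$ and $\CP(y,\cdot)$ --- it suffices to prove the one-point bound $W_{\tilde d}(\CP(x,\cdot),\CP(y,\cdot))\le\bar\alpha\,\tilde d(x,y)$ for all $x,y$. Put $s:=V(x)+V(y)$ and split into three regimes. In regime (i), $d(x,y)<1$: since $d^2$ is contracting there is a coupling $\Gamma$ with $\int d(u,v)^2\,\Gamma\le\alpha\,d(x,y)^2$, hence by Jensen $\int d\,\Gamma\le(\int d^2\,\Gamma)^{1/2}\le\sqrt{\alpha}\,d(x,y)$, and by Cauchy--Schwarz together with $\int(1+\beta V(u)+\beta V(v))\,\Gamma\le 1+\beta\gamma_0 s+2\beta K$ one gets $\int\tilde d(u,v)\,\Gamma\le\alpha^{1/4}d(x,y)^{1/2}\bigl(1+\beta\gamma_0 s+2\beta K\bigr)^{1/2}$. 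Dividing by $\tilde d(x,y)=d(x,y)^{1/2}(1+\beta s)^{1/2}$ and noting that $s\mapsto(1+\beta\gamma_0 s+2\beta K)/(1+\beta s)$ is decreasing (supremum $1+2\beta K$ at $s=0$), the ratio is at most $\alpha^{1/4}(1+2\beta K)^{1/2}<1$ for $\beta$ small. It is precisely this step --- paying an extra square root on the $d$-part when turning an $L^2$ bound into an $L^1$ bound --- that forces the contraction hypothesis to be imposed on $d^2$ rather than on $d$.

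In regime (ii), $d(x,y)=1$ and $s>R$ (threshold $R$ to be fixed): use the independence coupling and $d\le1$, so $W_{\tilde d}(\CP(x,\cdot),\CP(y,\cdot))\le(1+\beta\gamma_0 s+2\beta K)^{1/2}$ while $\tilde d(x,y)=(1+\beta s)^{1/2}$, giving ratio at most $\bigl((1+\beta\gamma_0 R+2\beta K)/(1+\beta R)\bigr)^{1/2}$, which is $<1$ as soon as $R>2K/(1-\gamma_0)$ --- and, crucially, this threshold does not involve $\beta$. In regime (iii), $d(x,y)=1$ and $s\le R$: then $x,y\in\{V\le R\}$, which is $d$-small, so there is a coupling with $\int d(u,v)\,\Gamma\le 1-\eps$ for the smallness constant $\eps=\eps_R>0$; Cauchy--Schwarz and $\tilde d(x,y)\ge1$ bound the ratio by $\bigl((1-\eps)(1+\beta\gamma_0 R+2\beta K)\bigr)^{1/2}<1$ for $\beta$ small. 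The constants are therefore fixed in the order: first $R:=2K/(1-\gamma_0)+1$ (legitimate because (ii) imposes no constraint linking $R$ and $\beta$), then $\eps=\eps_R$ from the $d$-smallness of $\{V\le R\}$, then $\beta$ small enough for (i) and (iii); $\bar\alpha$ is the maximum of the three ratios.

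Given the contraction, the conclusions follow. Every invariant measure $\mu$ has $\int V\,d\mu\le K/(1-\gamma_0)<\infty$ by the Lyapunov bound, so any two invariant $\mu,\mu'$ are at finite $W_{\tilde d}$-distance and $W_{\tilde d}(\mu,\mu')=W_{\tilde d}(\CP^n\mu,\CP^n\mu')\le\bar\alpha^n W_{\tilde d}(\mu,\mu')\to0$, whence $W_d(\mu,\mu')=0$ and $\mu=\mu'$; existence of $\mu_\star$ follows by a standard argument (e.g. the iterates $\CP^n\mu_0$ have geometrically summable consecutive $W_{\tilde d}$-distances, hence are $W_d$-Cauchy, and the contraction forces any limit to be invariant, a routine argument then promoting the $\CP$-fixed point to an invariant measure of the whole semigroup; alternatively, tightness from the Lyapunov function). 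For the rate, $W_d(\CP_{nt_\star}(x,\cdot),\mu_\star)\le W_{\tilde d}(\CP^n\delta_x,\CP^n\mu_\star)\le\bar\alpha^n W_{\tilde d}(\delta_x,\mu_\star)\le\bar\alpha^n\bigl(1+\beta V(x)+\beta K/(1-\gamma_0)\bigr)^{1/2}$, and for general $t=nt_\star+r$ one absorbs the initial step using $\CP_r V\le\CV V+\KV$, obtaining $d(\CP_t(x,\cdot),\mu_\star)\le C\,(1+V(x))^{1/2}e^{-\tilde\gamma t}$ with $\tilde\gamma=-(\log\bar\alpha)/t_\star$. The step I expect to be the main obstacle is the one-point contraction estimate --- in particular arranging that the three regimes close with a single consistent choice of $(R,\eps,\beta)$, which hinges on the $\beta$-independence of the threshold in regime (ii) --- together with the more routine but genuinely necessary care in setting up $\tilde d$ and its transport cost (lower semi-continuity, behaviour under gluing, and the point that $d$ need not generate the topology of $\X$).
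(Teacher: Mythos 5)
Your contraction argument is, in substance, the paper's own proof: the rigorous form of this statement is Theorem~\ref{theo:HarrisWeak}, and you reproduce its mechanism exactly --- the weighted cost $\sqrt{d(x,y)(1+\beta V(x)+\beta V(y))}$, the reduction (by convexity/gluing with a measurable choice of near-optimal couplings) to a pointwise bound, the same three regimes ``close to each other'', ``far from the origin'', ``close to the origin'' closed by Cauchy--Schwarz, the Lyapunov estimate and $d$-smallness, and the same order of fixing constants, including the key observation that the ``far'' regime's constant depends on $\beta$ but is below $1$ for every fixed $\beta$. The only differences are cosmetic: the paper first enlarges $t_\star$ so that $\CP_{t_\star}V\le \frac18 V+\KV$ and works with the single level set $\{V\le 4\KV\}$, whereas you keep $\gamma_0=\CV e^{-\gamma t_\star}<1$ and choose $R>2\KV/(1-\gamma_0)$ (legitimate here, since all level sets are assumed $d$-small); and the paper would apply its theorem to the distance-like function $d^2$ directly, whereas you convert the $d^2$-contraction into a $\sqrt{\alpha}$-contraction for $d$ via Jensen --- same effect. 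Your uniqueness argument and the derivation of the exponential rate (including the interpolation to non-multiples of $t_\star$ via $\CP_r V\le \CV V+\KV$) also match the paper.

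The one genuine gap is existence. Neither of your suggested shortcuts follows from the stated hypotheses: level sets of $V$ need not be compact in a Polish (typically infinite-dimensional) space, so the Lyapunov bound gives no tightness; $d$ is merely a bounded lower semicontinuous metric with no stated relation to the topology of $\X$, so the space of probability measures with the cost $W_d$ need not be complete; and even granting a limit point of $\{\CP^n\mu_0\}$, identifying it as invariant requires a Feller-type continuity that is nowhere assumed --- the contraction holds in $W_{\tilde d}$, which is not controlled by $W_d$ along a merely $W_d$-convergent sequence, so ``the contraction forces any limit to be invariant'' does not close. This is exactly why the paper states Theorem~\ref{theo:HarrisWeak} with the conclusion ``at most one invariant measure'' plus the spectral gap, and proves existence separately in the corollary that follows it, under the additional assumptions that $\CP_t$ is Feller and that there is a complete metric $d_0\le\sqrt d$ on $\X$; with those hypotheses (or with compact level sets of $V$) your Cauchy-sequence argument goes through verbatim. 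So either import those assumptions explicitly or flag that the existence claim in the simplified statement is to be read with them.
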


\subsection{Structure of paper}

In Section~\ref{sec:absErgodic}, we give the proof of
Theorem~\ref{thm:mainSimpleErgodic} as well as a result which under
related hypotheses ensures that the transition probabilities starting from
any point converge to the expected invariant measure. In
Section~\ref{sec:uniqueness}, we apply the results of the preceding
theorems to prove the unique ergodicity and convergence of transition
probabilities for a wide class of SDDE, including those with state
dependent coefficients. In Section~\ref{sec:Harris}, we prove a weak version
of Harris' ergodic theorem which implies exponential convergence in a
type of weighted Wasserstein-$1$ distance on measures and an associated spectral
gap.  In Section~\ref{sec:applgap}, we apply these results to the
SDDE setting under the additional assumption of a Lyapunov
function in order to obtain a spectral gap result. Lastly, in Section~\ref{sec:contractSPDE}, we show how to
apply the results to the SPDE setting, thus providing an alternative proof to the
results in \cite{Gap}.

\vspace{1em}\noindent \textbf{Acknowledgements:} Work on this paper began while
JCM and MS were visiting the Centro di Ricerca Matematica Ennio De
Giorgi in 2006. MH and MS continued the work while both visited the Mittag-Leffler institute and later on during
a workshop at the Oberwolfach mathematical research institute. Lastly, the end of the tunnel was seen while JCM visited the
Warwick Maths Institute. We thank all of these fine institutions for
their hospitality and role in this collaboration. We would also like to thank P. Bubak,  M.S. Kinnally 
and R.J. Williams for pointing out several 
errors in a previous version of the manuscript.

MH gratefully acknowledges support from an EPSRC Advanced Research
Fellowship (grant number EP/D071593/1).  JCM gratefully acknowledges
support from an NSF PECASE award (DMS-0449910) and a Sloan foundation
fellowship. MS gratefully acknowledges support from the DFG Forschergruppe 718 
``Analysis and Stochastics in Complex Physical Systems''.

\section{Asymptotic coupling}
\label{sec:absErgodic}
This section contains the proof of Theorem~\ref{thm:mainSimpleErgodic}
and a number of results which use related ideas. The goal throughout
this section is to use minimal assumptions. We also provide a criterion that yields convergence 
rates toward the invariant measure using a coupling argument. However, in the case of exponential convergence,
a somewhat cleaner statement will be provided by the weak version of Harris' theorem in Section~\ref{sec:Harris}. 
\subsection{Proof of Theorem~\ref{thm:mainSimpleErgodic}: Unique ergodicity through asymptotic coupling}
The proof given below abstracts the essence of the arguments given in
\cite{EMS,BM}. A version of this theorem is presented in
\cite{saintFlourJCM08}. The basic idea is to leverage the fact that if
the initial condition is distributed as an ergodic invariant measure, then
Birkhoff's ergodic theorem implies that the average along a trajectory of a
given function is an almost sure property of the trajectories.
\begin{proof}[of Theorem \ref{thm:mainSimpleErgodic}]
Throughout this proof we will use the shorthand notation $m_i = \cP_{[\infty]} \mu_i$ for $i=1,2$.
  Clearly \ref{mainSimpleErgodic:1}. implies \ref{mainSimpleErgodic:2}. because if $\mu_1
  = \mu_2$, then we can take the asymptotic coupling to be the measure
  $m_1$ (which then equals $m_2$)
  pushed onto the diagonal of $\X^\infty \times \X^\infty$. Clearly
  \ref{mainSimpleErgodic:2}. implies \ref{mainSimpleErgodic:3}. since $\cC(
  \cP_{[\infty]} \mu_1, \cP_{[\infty]} \mu_2) \subset \widetilde \cC(
  \cP_{[\infty]} \mu_1, \cP_{[\infty]} \mu_2)$ and the definition
  of asymptotic coupling implies that there exists a $\Gamma \in \cC(
  \cP_{[\infty]} \mu_1, \cP_{[\infty]} \mu_2)$ with
  $\Gamma(\mathcal{D})=1$. We now turn to the meat of the result,
  proving that \ref{mainSimpleErgodic:3}. implies \ref{mainSimpleErgodic:1}.

  Defining the shift $\theta\colon\X^\infty \rightarrow \X^\infty$ by
  $(\theta x)_k = x_{k+1}$ for $k \in \N_0$, we observe that
  $\theta\push m_i =m_i$ for $i=1,2$. Or in other words $m_i$ is an
  invariant measure for the map $\theta$. In addition, one sees that
  $m_i$ is an ergodic invariant measure for the shift $\theta$ since
  $\mu_i$ was ergodic for $\cP$.

  Fixing any bounded, globally Lipschitz $\phi:\X \rightarrow \R$ we
  extend it to a bounded function $\tilde \phi: \X^\infty \rightarrow \R$ by setting
  $\tilde \phi(x) = \phi(x_0)$ for $x \in \X^\infty$. Now Birkhoff's ergodic
  theorem and the fact that the $m_i$ are ergodic for $\theta$ ensure the
  existence of sets $A_i^\phi \subset \X^\infty$ with $m_i(A^\phi_i)=1$ so
  that if $x \in A_i^\phi$ then
  \begin{align}
    \label{eq:converge}
    \lim_{n \rightarrow \infty} \frac1n \sum_{k=0}^{n-1} \phi(x_k)
    =\lim_{n \rightarrow \infty} \frac1n \sum_{k=0}^{n-1} \tilde\phi(\theta^k x)
    = \int_{\X^\infty} \tilde\phi(x) m_i(\dd x)=\int_\X \phi(z) \mu_i(\dd z)\;.
  \end{align}
  Here, the first and last implications follow from the definition of
$\tilde \phi$. Now let $\Gamma \in \widetilde \cC( m_1,m_2)$ with
  $\Gamma(\mathcal{D}) >0$. Since $\PI^{(i)}\push \Gamma \ll m_i$ and
  both are probability measures we know that $(\PI^{(i)}\push
  \Gamma)(A_i^\phi)= m_i(A_i^\phi)=1$ for $i=1,2$. This in turn implies
  that $\Gamma(\X^\infty \times A_2^\phi)=\Gamma(A_1^\phi \times
  \X^\infty)=1$, so that $\Gamma(A_1^\phi \times
  A_2^\phi)=1$. Hence if $\mathcal{\widetilde D} = \mathcal{D} \cap (
  A_1^\phi \times A_2^\phi )$, we have that $\Gamma(\mathcal{\widetilde D})
  > 0$. In particular, this implies that $\mathcal{\widetilde D}$ is not
  empty. Observe that for any $(x^{(1)},x^{(2)}) \in \mathcal{\widetilde
    D}$ we know that for $i =1,2$
  \begin{align*}
    \lim_{n \rightarrow \infty} \frac1n \sum_{k=0}^{n-1} \phi(x_k^{(i)})=
    \int_\X \phi(z) \mu_i(\dd z)\,.
  \end{align*}
  On the other hand, since $(x^{(1)}, x^{(2)}) \in
  \mathcal{\widetilde D}\subset\mathcal{D}$, we know that $\lim_n
  d(x_n^{(1)},x_n^{(2)})=0$. Combining these facts gives
  \begin{align*}
    \left| \int \phi(x) \mu_1(\dd x) - \int \phi(x) \mu_2(\dd x) \right| &=
    \Bigl| \lim_{n \rightarrow \infty} \Bigl(\frac1n \sum_{k=0}^{n-1}
      \phi(x_k^{(1)}) - \frac1n \sum_{k=0}^{n-1} \phi(x_k^{(2)})\Bigr)\Bigr|\\
    &\leq \lim_{n \rightarrow \infty} \frac{C}{n} \sum_{k=0}^{n-1} d\big(
      x_k^{(1)}, x_k^{(2)}\big) = 0\;,
   \end{align*}
where both the first equality and the second inequality follow from the fact that we choose $(x^{(1)},x^{(2)})$ in $\tilde \CD$.
Therefore,
  \begin{align*}
    \int \phi(x) \mu_1(\dd x) = \int \phi(x) \mu_2(\dd x)
  \end{align*}  
  for any Lipschitz and bounded $\phi$ which implies that $\mu_1=\mu_2$.
\end{proof}

\begin{remark}
Note that it is not true in general that the uniqueness of the invariant measure implies that there exist asymptotic couplings for
any two starting points! See for example \cite{Lindvall,CraWan} for a discussion on the relation between coupling, shift coupling, ergodicity, and mixing.
\end{remark}

\begin{corollary}\label{cor:coupling}
  Let $\cP$ be a Markov operator on a Polish space. If there exists a measurable
  set $A \subset \X$ with the following two properties:
  \begin{claim}
\item   $\mu(A)>0$ for any invariant probability measure $\mu$ of $\CP$,
\item there exists a measurable map $A \times A \ni (x,y) \mapsto \Gamma_{x,y} \in \widetilde \cC( \cP_{[\infty]}\delta_x, \cP_{[\infty]}\delta_y)$
 such that $\Gamma_{x,y}(\mathcal{D}) > 0$ for every $x,y \in A$.
\end{claim}
then $\CP$ has at most one invariant probability measure.
\end{corollary}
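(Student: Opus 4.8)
The plan is to derive this from Theorem~\ref{thm:mainSimpleErgodic}. Since $\X$ is Polish, every invariant probability measure of $\cP$ is a mixture of ergodic invariant measures (ergodic decomposition), so it suffices to show that any two ergodic invariant measures $\mu_1,\mu_2$ of $\cP$ coincide: if all ergodic invariant measures agree then $\cP$ has at most one invariant measure, and if there are no ergodic invariant measures then there is nothing to prove.

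So fix two ergodic invariant measures $\mu_1,\mu_2$ of $\cP$. By the first hypothesis $\mu_i(A)>0$, so $\bar\mu_i := \mu_i(A)^{-1}\,\one_A\,\mu_i$ is a probability measure supported on $A$ with $\bar\mu_i \ll \mu_i$. Using the measurability of the map $(x,y)\mapsto \Gamma_{x,y}$, I would then set
\[
\Gamma \;:=\; \int_{A\times A} \Gamma_{x,y}\;\bar\mu_1(\dd x)\,\bar\mu_2(\dd y)\; \in\; \cM(\X^\infty\times\X^\infty)\;,
\]
which is a genuine probability measure since each $\Gamma_{x,y}$ is one and $\bar\mu_1\otimes\bar\mu_2$ is a probability measure on $A\times A$.

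It remains to check that $\Gamma \in \widetilde\cC(\cP_{[\infty]}\mu_1, \cP_{[\infty]}\mu_2)$ and that $\Gamma(\mathcal D)>0$, and then to invoke Theorem~\ref{thm:mainSimpleErgodic}. Since pushforward commutes with integration of kernels,
\[
\PI^{(1)}\push\Gamma \;=\; \int_A \Bigl(\int_A \PI^{(1)}\push\Gamma_{x,y}\;\bar\mu_2(\dd y)\Bigr)\bar\mu_1(\dd x)\;.
\]
By the second hypothesis $\PI^{(1)}\push\Gamma_{x,y} \ll \cP_{[\infty]}\delta_x$ for all $x,y\in A$, so the inner integral is $\ll \cP_{[\infty]}\delta_x$; then, using that $\nu_x \ll \rho_x$ for all $x$ implies $\int \nu_x\,\lambda(\dd x) \ll \int \rho_x\,\lambda(\dd x)$, one obtains $\PI^{(1)}\push\Gamma \ll \int_A \cP_{[\infty]}\delta_x\,\bar\mu_1(\dd x) = \cP_{[\infty]}\bar\mu_1 \ll \cP_{[\infty]}\mu_1$, the last step because $\bar\mu_1 \ll \mu_1$; the second marginal is handled symmetrically. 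Finally $\mathcal D$ is a Borel subset of $\X^\infty\times\X^\infty$ (as $d$ is continuous on $\X$), so $m\mapsto m(\mathcal D)$ is measurable and $\Gamma(\mathcal D) = \int_{A\times A}\Gamma_{x,y}(\mathcal D)\,\bar\mu_1(\dd x)\,\bar\mu_2(\dd y) > 0$, the integrand being strictly positive on $A\times A$ by the second hypothesis. Theorem~\ref{thm:mainSimpleErgodic} then yields $\mu_1=\mu_2$, completing the argument.

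The only real work lies in the measure-theoretic bookkeeping: that $\Gamma$ is well defined as a measure on the product space, that its marginals are obtained by integrating the marginals of the $\Gamma_{x,y}$, and above all that absolute continuity survives both the inner integration over $y$ and the ``mixing'' over $x$ against $\bar\mu_1$. Each of these steps uses the measurability hypothesis on $(x,y)\mapsto\Gamma_{x,y}$ in an essential way, but none is difficult; the entire conceptual content is carried by Theorem~\ref{thm:mainSimpleErgodic}.
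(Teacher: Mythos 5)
Your proposal is correct and follows essentially the same route as the paper: reduce to two ergodic invariant measures via the ergodic decomposition, build a mixture $\Gamma$ of the pointwise measures $\Gamma_{x,y}$ against the invariant measures, verify $\Gamma\in\widetilde\cC(\cP_{[\infty]}\mu_1,\cP_{[\infty]}\mu_2)$ and $\Gamma(\mathcal D)>0$, and invoke Theorem~\ref{thm:mainSimpleErgodic}. The only (cosmetic) difference is that you integrate over $A\times A$ against the normalised restrictions $\bar\mu_i$, while the paper extends $\Gamma_{x,y}$ by the product coupling off $A\times A$ and integrates against $\mu_1\otimes\mu_2$; both yield the same conclusion.
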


\begin{remark}
The measurability of the map $\Gamma$ means that the map $(x,y) \mapsto \int \phi\,d\Gamma_{x,y}$ is
measurable for every bounded continuous function $\phi \colon \X^\infty \times \X^\infty \to \R$.
\end{remark}

\begin{proof}[of Corollary~\ref{cor:coupling}]
Assume that there are two invariant measures $\mu_1$ and $\mu_2$. By the ergodic decomposition, we can assume that $\mu_1$ and $\mu_2$ are both 
ergodic since any invariant  measure can be decomposed into ergodic invariant measures.  We extend the definition of $\Gamma$ to $\X \times \X$ by $\Gamma_{x,y}=\cP_{[\infty]}\delta_x \times \cP_{[\infty]}\delta_y$ for $(x,y) \not \in A\times A$.
For measurable sets $B \subset \X^\infty \times \X^\infty $  define the measure $\Gamma$ by
\begin{align*}
  \Gamma(B) = \int_{\X\times \X} \Gamma_{x,y}(B)\mu_1(dx)\mu_2(dy) 
\end{align*}
and notice that $\Gamma \in  \widetilde\cC( \cP_{[\infty]}\mu_1, \cP_{[\infty]}\mu_2)$ by construction. Furthermore by the assumption that $\Gamma_{x,y}(\mathcal{D}) >0$ for $(x,y) \in A \times A$, we see that $\Gamma(\mathcal{D}) >0$. Hence Theorem~\ref{thm:mainSimpleErgodic}  implies that $\mu_1=\mu_2$.
\end{proof}

\subsection{Convergence of transition probabilities}\label{subs:convergence}

In this section, we give a simple criterion for the convergence of
transition probabilities towards an invariant measure under extremely
weak conditions that essentially state that:
\begin{claim}
\item[1.] There exists a point $x_0 \in \X$ such that the process
  returns ``often'' to arbitrarily small neighbourhoods of $x_0$.
\item[2.] The coupling probability between trajectories starting at
  $x$ and $y$ converges to $1$ as $y \to x$.
\end{claim}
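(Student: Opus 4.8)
We read the two displayed items as the hypotheses of the convergence statement of this subsection; concretely, that there is a point $x_0\in\X$ such that: (1) for every $\rho>0$ the chain started from an arbitrary $x\in\X$ satisfies $\liminf_{n\to\infty}\frac1n\sum_{k=0}^{n-1}\one_{B_\rho(x_0)}(X_k)>0$ almost surely; and (2) for every $\eps>0$ there is $\rho>0$ together with a jointly measurable family $(x,y)\mapsto\Gamma_{x,y}\in\cC\bigl(\cP_{[\infty]}\delta_x,\cP_{[\infty]}\delta_y\bigr)$ with $\Gamma_{x,y}(\mathcal D)\ge 1-\eps$ for all $x,y\in B_\rho(x_0)$. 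The plan is to deduce that $\cP$ has at most one invariant probability measure and that, whenever an invariant measure $\mu_\star$ exists (existence being either assumed or supplied by an additional Lyapunov/tightness condition), $\cP^n(x,\cdot)\to\mu_\star$ weakly for every $x\in\X$.

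Uniqueness is almost immediate from the machinery already developed. Fixing any $\eps<1$ in (2) yields a radius $\rho$ and, on $A:=B_\rho(x_0)$, a measurable family with $\Gamma_{x,y}(\mathcal D)\ge 1-\eps>0$; in particular $\Gamma_{x,y}\in\widetilde\cC(\cP_{[\infty]}\delta_x,\cP_{[\infty]}\delta_y)$ with $\Gamma_{x,y}(\mathcal D)>0$. On the other hand, for any invariant $\mu$, restricting to an ergodic component and applying Birkhoff's theorem together with (1) gives $\mu(A)=\E_\mu\bigl[\lim_n\frac1n\sum_{k<n}\one_A(X_k)\bigr]>0$. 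Thus both hypotheses of Corollary~\ref{cor:coupling} hold with this $A$, so $\cP$ has at most one invariant measure.

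For the convergence, let $\mu_\star$ be that invariant measure and fix $x\in\X$. I would build a coupling $(X^{(1)},X^{(2)})$ of $\cP_{[\infty]}\delta_x$ and $\cP_{[\infty]}\mu_\star$ (the second coordinate thus being stationary) for which $d(X^{(1)}_n,X^{(2)}_n)\to 0$ almost surely. Fix $\eps\in(0,1)$ with $\rho$ as in (2). Run the two chains independently until the first time $\sigma_1$ at which the first chain lies in $B_\rho(x_0)$; such times recur by (1). Since the second chain is stationary and independent of the first, and $\sigma_1$ is a stopping time for the first chain, $X^{(2)}_{\sigma_1}$ still has law $\mu_\star$, hence lies in $B_\rho(x_0)$ with probability $\mu_\star(B_\rho(x_0))>0$; on that event we switch, from time $\sigma_1$ on, to the coupling $\Gamma_{X^{(1)}_{\sigma_1},X^{(2)}_{\sigma_1}}$ of the two path laws, which makes $d(X^{(1)}_n,X^{(2)}_n)\to0$ with conditional probability $\ge1-\eps$. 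If $X^{(2)}_{\sigma_1}\notin B_\rho(x_0)$ or the attempt fails, we continue independently and repeat at the next return $\sigma_2>\sigma_1$ of the first chain to $B_\rho(x_0)$, and so on. Each genuine attempt succeeds with conditional probability at least $1-\eps$, and — via the strong Markov property and the stationarity of the second coordinate — attempts are made infinitely often almost surely, so a Borel--Cantelli argument shows some attempt eventually locks the paths together. Finally, for every bounded Lipschitz $\phi\colon\X\to\R$ with Lipschitz constant $L_\phi$,
\begin{equ}
\Bigl|\cP^n\phi(x)-\int_\X\phi\,d\mu_\star\Bigr|=\bigl|\E\bigl[\phi(X^{(1)}_n)-\phi(X^{(2)}_n)\bigr]\bigr|\le\E\bigl[\min\bigl(2\|\phi\|_\infty,\,L_\phi\,d(X^{(1)}_n,X^{(2)}_n)\bigr)\bigr],
\end{equ}
and the right-hand side tends to $0$ by dominated convergence, which is precisely $\cP^n(x,\cdot)\to\mu_\star$ weakly.

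The main obstacle is the synchronisation-and-retry bookkeeping hidden in the previous paragraph: condition (1) controls only how often each chain \emph{separately} visits $B_\rho(x_0)$, so one must argue that the independently evolving pair occupies $B_\rho(x_0)\times B_\rho(x_0)$ at arbitrarily late times — this is exactly where stationarity of the second coordinate is essential, since at any stopping time of the first chain the second chain's marginal is still $\mu_\star$, which charges $B_\rho(x_0)$ — and one must arrange the sequence of attempts, each reverting to independence on failure, so that the strong Markov property applies cleanly and the accumulated failure probabilities are summable. A secondary point to handle with care is that (2), as used in the convergence direction, should deliver honest couplings of the path laws (elements of $\cC$, not merely of $\widetilde\cC$): the absolute-continuity relaxation was tailored to the Birkhoff argument behind Theorem~\ref{thm:mainSimpleErgodic}, and the convergence statement is naturally phrased with genuine couplings.
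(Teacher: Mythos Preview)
Your reading of the informal hypotheses differs from the paper's precise formulation (Theorem~\ref{thm:conv}), and more importantly your convergence argument contains a genuine gap in the retry mechanism.

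The central problem is the phrase ``if the attempt fails, we continue independently and repeat.'' The event you are calling failure is $\mathcal D^c$, which is a \emph{tail event}: it cannot be detected at any finite time, so there is no stopping time at which you can legitimately declare failure and revert to independent evolution. Even if you tried to truncate $\mathcal D$ to a finite-horizon proxy, once you have run the coupling $\Gamma$ and conditioned on that proxy failing, the second coordinate's marginal is no longer $\mu_\star$, so your key observation that ``at any stopping time of the first chain the second chain's marginal is still $\mu_\star$'' is lost. The Borel--Cantelli step therefore does not go through. A subsidiary issue is the synchronisation: knowing only that chain~1 visits $B_\rho(x_0)$ along the $\sigma_k$ and that $X^{(2)}_{\sigma_k}$ has marginal $\mu_\star$ does not by itself force $X^{(2)}_{\sigma_k}\in B_\rho(x_0)$ infinitely often, since these events are correlated.

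The paper sidesteps both difficulties. It never retries after a coupling attempt: for each fixed $\eps>0$ it constructs a single coupling under which $\P\bigl(d(Y_n,Z_n)\le\eps\bigr)\ge 1-4\eps$ for all large $n$, and then obtains weak convergence by letting $\eps\downarrow 0$ (a different coupling for each $\eps$ is fine, since the conclusion is about one-dimensional marginals). For synchronisation it does not rely on independence at stopping times of one chain; instead it assumes a set $B$ with $\mu_\star(B)>\tfrac12$ and uses the elementary inequality $\one_B(Y_t)\one_B(Z_t)\ge\one_B(Y_t)+\one_B(Z_t)-1$ together with Birkhoff's theorem to force the \emph{pair} into $B\times B$ with positive asymptotic frequency, and then uses a uniform lower bound $\inf_{y\in B}\CP^k(y,U)>0$ to push both coordinates into the small neighbourhood $U$ simultaneously. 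Note also that the paper's coupling hypothesis is required at \emph{every} point $x\in\X$, not only at $x_0$; this is what allows the extension from a $\mu_\star$-full set to all of $\supp\mu_\star$.
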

More precisely, we have the following result:

\begin{theorem}\label{thm:conv}
  Let $\CP$ be a Markov kernel over a Polish space $\X$ with metric $d$ that admits an
  ergodic invariant measure $\mu_\star$.  Assume that there exist $B
  \subset \X$ with $\mu_\star(B) > {1\over 2}$ and $x_0 \in \X$ such
  that, for every neighbourhood $U$ of $x_0$ there exists $k>0$ such
  that $\inf_{y \in B} \CP^k(y,U) > 0$. Assume furthermore that there
  exists a measurable map
\begin{equs}
\Gamma \colon \X \times \X &\to \CM(\X^\infty, \X^\infty)\\ 
(y,y') &\mapsto \Gamma_{y,y'}\;, 
\end{equs}
with the property that $\Gamma_{y,y'} \in \cC(
\cP_{[\infty]}\delta_y, \cP_{[\infty]}\delta_{y'})$ for every
$(y,y')$ and such that, for every $\eps > 0$ and every $x\in \X$,
there exists a neighbourhood $U$ of $x$ such that $\inf_{y,y' \in U}
\Gamma_{y,y'}(\CD) \ge 1-\eps$.

Then, $\CP^n(z,\cdot\,) \to \mu_\star$ weakly as $n \to \infty$ for
every $z \in \supp \mu_\star$.
\end{theorem}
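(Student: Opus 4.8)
The plan is to show that $\CP^n(z,\cdot\,) \to \mu_\star$ weakly by testing against an arbitrary bounded Lipschitz $\phi$, and to leverage the coupling $\Gamma$ together with the recurrence hypothesis to compare a trajectory started at $z$ with a trajectory started from $\mu_\star$. First I would fix $\eps > 0$ and a bounded Lipschitz function $\phi \colon \X \to \R$, and reduce the claim to showing $\limsup_n |\CP^n\phi(z) - \int \phi\,d\mu_\star| \le C\eps$ for some constant $C$ independent of $\eps$. Since $\mu_\star$ is invariant, $\int\phi\,d\mu_\star = \int \CP^n\phi\,d\mu_\star$, so it suffices to control $|\CP^n\phi(z) - \CP^n\phi(y)|$ for $y$ in a set of large $\mu_\star$-measure. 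Here the coupling enters: for $y,y'$ in a small neighbourhood $U$ of a given point, $\Gamma_{y,y'}(\CD) \ge 1 - \eps$, and on $\CD$ the two coordinate sequences converge, so for $n$ large, $d(x_n^{(1)},x_n^{(2)})$ is small with probability close to $1$; since $\phi$ is bounded and Lipschitz this gives $|\CP^n\phi(y) - \CP^n\phi(y')| \le \eps\,(2\|\phi\|_\infty + \mathrm{Lip}(\phi)) + o(1)$ as $n \to \infty$, uniformly over $y,y' \in U$.

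The role of the recurrence hypothesis is to bridge from the single point $z$ to this neighbourhood-level estimate. The idea is as follows. Because $z \in \supp\mu_\star$ and $\mu_\star(B) > \tfrac12$, and because from every $y \in B$ the chain reaches any neighbourhood of $x_0$ with positive probability in a bounded number of steps, one can show that trajectories started at $z$ and trajectories started from $\mu_\star$ both enter an arbitrarily small neighbourhood $U$ of $x_0$, at the same (random, bounded) time, with positive probability — intuitively, run both to time $k$, use $\mu_\star(B) > \tfrac12$ so that a $\mu_\star$-trajectory is in $B$ at the relevant time with probability $>\tfrac12$, and from there hit $U$. More precisely, I would construct, using $\Gamma$ again (or just independence) and the Markov property, a coupling of $\CP^{n}(z,\cdot\,)$ and $\mu_\star$ in which, with probability bounded below by some $p > 0$ depending only on $U$ and $k$, the two trajectories are simultaneously in $U$ at some time $m \le k$, and then apply the previous paragraph's estimate from time $m$ onward via the Markov property and measurability of $\Gamma$. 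Iterating this over disjoint time blocks of length roughly $k$ drives the probability of eventual simultaneous entry to $U$ up to $1$, and then the asymptotic coupling from $U$ forces $d(x_n^{(1)},x_n^{(2)}) \to 0$ along the coupled trajectories with probability $1$.

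Assembling: from such a coupling $\Pi_n$ of $\CP^n(z,\cdot\,)$ and $\mu_\star$ one gets
\begin{equ}
\Bigl| \CP^n\phi(z) - \int\phi\,d\mu_\star\Bigr| = \Bigl|\int \bigl(\phi(u) - \phi(v)\bigr)\,\Pi_n(du,dv)\Bigr| \le \mathrm{Lip}(\phi)\int d(u,v)\wedge 1 \,\Pi_n(du,dv) + 2\|\phi\|_\infty \eps\,,
\end{equ}
and the first term tends to $0$ by dominated convergence once one knows that along the coupled process $d \to 0$ in probability. Sending $\eps \to 0$ gives $\CP^n\phi(z) \to \int\phi\,d\mu_\star$, which is exactly weak convergence.

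The main obstacle is the bookkeeping in the middle step: turning the two qualitatively-stated hypotheses — "returns often to neighbourhoods of $x_0$" and "coupling probability $\to 1$ as $y' \to y$" — into a genuine coupling of $\CP^n(z,\cdot\,)$ with $\mu_\star$ in which simultaneous entry into a small neighbourhood happens eventually almost surely. One has to be careful that the measurability of $y \mapsto \Gamma_{y,y'}$ is what lets us glue the coupling at the (random) entry time via the Markov property, and that the recurrence estimate $\inf_{y\in B}\CP^k(y,U) > 0$ is used with a time $k$ that may depend on $U$, so the block length must be chosen accordingly; the neighbourhood $U$ in turn must be chosen small enough that the asymptotic-coupling hypothesis gives $\Gamma_{y,y'}(\CD) \ge 1-\eps$ on $U\times U$. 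Coordinating these choices — $U$ small for the coupling, $k$ adapted to $U$ for the recurrence, and $B$ with $\mu_\star(B) > \tfrac12$ so the stationary trajectory cooperates — is the delicate part; the rest is the standard Lipschitz/weak-convergence packaging above.
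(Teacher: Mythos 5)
The overall architecture you describe --- run a trajectory from $z$ alongside a stationary one, force the pair into a small neighbourhood $U$ of $x_0$ simultaneously, switch there to the asymptotic coupling $\Gamma$, and finish by testing against bounded Lipschitz functions --- is the same as the paper's. But the pivotal step, namely that the two trajectories are simultaneously in $B$ (and hence, after the geometric-trials argument, simultaneously in $U$) at some finite time, is asserted rather than proved, and your sketch only justifies half of it. Stationarity gives $\P(Y_t \in B) = \mu_\star(B) > {1\over 2}$ for the trajectory started from $\mu_\star$, but nothing in your argument controls whether the trajectory started at $z$ is ever in $B$, let alone at the same times: $z$ is only assumed to lie in $\supp\mu_\star$ and there is no irreducibility hypothesis. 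This is exactly where the \emph{ergodicity} of $\mu_\star$ must enter. The paper applies Birkhoff's ergodic theorem to obtain a full-measure set $A$ such that from any starting point in $A$ the empirical occupation fraction of $B$ tends a.s.\ to $\mu_\star(B) > {1\over 2}$; applying this to both (independent) trajectories and using $\one_B(x)\one_B(y) \ge \one_B(x)+\one_B(y)-1$ shows the pair occupies $B\times B$ a positive fraction of the time, so the successive simultaneous hitting times $\tau_k$ of $B\times B$ are a.s.\ finite. Only along these $\tau_k$ does the iteration with $\inf_{y\in B}\CP^{T_U}(y,U)\ge\alpha$ give $\P(\tau \ge T) \le (1-\alpha)^k + \P(\tau_k \ge T-T_U)$; your plan to ``iterate over disjoint blocks of length roughly $k$'' has no uniform lower bound per block, since after a failed attempt there is no guarantee the pair is back in $B\times B$ within a bounded time.

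A second, related omission: the Birkhoff argument only covers $z$ in the full-measure set $A$, not every $z \in \supp\mu_\star$ as the statement requires, and your proposal works with a general $z\in\supp\mu_\star$ throughout. The paper closes this by a separate last step: $A$ is dense in $\supp\mu_\star$, so the local-coupling hypothesis (which is assumed at \emph{every} $x\in\X$, not only at $x_0$) yields $z'\in A$ with $\Gamma_{z,z'}(\CD)\ge 1-\eps$, and weak convergence is transferred from $z'$ to $z$ through this asymptotic coupling. With the Birkhoff ingredient and this two-stage structure added (and with the uniformity over $U$ that you invoke replaced, as in the paper, by choosing a deterministic horizon $T_1$ carrying most of the mass of the entry law), the remainder of your outline --- choice of $U$ so that $\Gamma_{y,y'}(\CD)\ge 1-\eps$ on $U\times U$, gluing at the entry time via measurability of $\Gamma$ and the strong Markov property, and the Lipschitz packaging --- goes through as in the paper.
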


\begin{remark}
  This convergence result is valid even in situations where the
  invariant measure is not unique. If the process is irreducible
  however, then $\supp \mu_\star = \X$ and the convergence holds for
  every $z \in \X$ (implying in particular that $\mu_\star$ is
  unique).
\end{remark}

\begin{proof}
We denote by $\CD_N^\eps$ the subset of $\X^\infty \times \X^\infty$given by
\begin{equ}
\CD_N^\eps = \{(X, Y)\,:\, d(X_n, Y_n) \le \eps \;\forall n \ge N\}\;,
\end{equ}
so that $\CD = \bigcap_{\eps>0} \bigcup_{N > 0} \CD_N^\eps$. 

Note first that by the ergodicity of $\mu_\star$ and Birkhoff's
ergodic theorem, there exists a set $A$ with $\mu_\star(A) = 1$ and
such that
\begin{equ}
\lim_{T \to \infty}{1\over T} \sum_{t=0}^T \one_{B}(X_t) = \mu_\star(B) > {1\over 2}\;,
\end{equ}
holds almost surely for every process $X_t$ with $X_0 \in A$.

Fix now $z \in A$ and $\eps>0$ and let $x_0$ be as in the first
assumption of the statement. Fix furthermore a neighbourhood $U$ of
$x_0$ such that $\Gamma_{y,y'}(\CD) \ge 1-\eps$ for $y,y' \in
U$. Define the function $(y,y') \mapsto N_{y,y'}$ by
\begin{equ}[e:defNyy]
N_{y,y'} = \inf \{N > 0 \,:\, \Gamma_{y,y'}(\CD_N^\eps) \ge 1-2\eps\}\;.
\end{equ}
It follows from the choice of $U$ and the fact that $\CD \subset
\bigcup_{N > 0} \CD_N^\eps$ that one has $N_{y,y'} < \infty$ for every
pair $y,y' \in U$. Let now $Y_t$ be a Markov process generated by
$\CP$ with initial distribution $\mu_\star$ and let $Z_t$ be an
independent process with initial condition $z$. Since one has the
bound $\one_B(x)\one_B(z) \ge \one_B(x) + \one_B(z)-1$, it follows
from the definition of $A$ that
\begin{equ}[e:boundB]
  \lim_{T \to \infty}{1\over T} \sum_{t=0}^T
  \one_{B}(Y_t)\one_{B}(Z_t) \ge 2\mu_\star(B) - 1 > 0\;,
\end{equ}
almost surely. Let now $\tau = \inf\{t \ge 0 \,:\, Y_t \in U \;\& \;
Z_t \in U\}$. Before we proceed, let us argue that $\tau$ is almost
surely finite. We know indeed by assumption that there exists $T_U>0$
and $\alpha > 0$ such that $\CP^{T_U}(y,U)\ge \alpha$ for every $y \in
B$. Furthermore, setting $\tau_0 = 0$ and defining $\tau_k$
recursively by
\begin{equ}
  \tau_{k} = \inf\{t \ge \tau_{k-1} + T_U \,:\, Y_t \in B \;\& \; Z_t
  \in B\}\;,
\end{equ}
we have the bound
\begin{equs}
  \P (\tau \ge T) &\le \P (\tau \ge T\,|\, \tau_k < T-T_U)\P(\tau_k < T-T_U) + \P(\tau_k \ge T-T_U) \\
  &\le \P (\tau \neq \tau_1 + T_U\,\&\,\cdots\,\&\, \tau \neq \tau_k + T_U) + \P(\tau_k \ge T-T_U) \\
  &\le (1-\alpha)^k + \P(\tau_k \ge T-T_U)\;,
\end{equs}
where the last inequality follows from the strong Markov
property. Since we know from \eref{e:boundB} that the $\tau_k$ are
almost surely finite, we can make $\P(\tau_k \ge T-T_U)$ arbitrarily
small for fixed $k$ by making $T$ large.

It follows that there exists $T_0> 0$ with $\P(\tau \le T_0) \ge
1-\eps$. Let now $\mu_0$ be the law of the stopped process at time
$T_0$, that is $\mu_0 = \Law (Y_{T_0 \wedge \tau}, Z_{T_0 \wedge
  \tau})$. It follows from the definitions of $T_0$ and $\tau$ that
$\mu_0(U \times U) \ge 1-\eps$.

Since $N_{y,y'}$ is finite on $U\times U$, we can find a sufficiently
large value $T_1>0$ such that
\begin{equ}
\mu_0 \bigl(\{(y,y') \in U\times U \,:\, N_{y,y'} \le T_1\}\bigr) \ge 1-2\eps\;.
\end{equ}
Let now $\tilde \Gamma_{y,z}$ be the coupling between
$\cP_{[\infty]}\mu_\star$ and $\cP_{[\infty]}\delta_z$ obtained by
first running two independent copies $(Y_t, Z_t)$ up to the stopping
time $\tau$ and then running them with the coupling
$\Gamma_{Y_\tau,Z_\tau}$. This is indeed a coupling by the strong
Markov property (recall that we are in the discrete time
case). Setting $T = T_0 + T_1$, it follows immediately from the
construction that under the coupling $\tilde \Gamma_{y,z}$, we have
$\P(d(Y_t, Z_t) \le \eps) \ge 1-4\eps$ for each $n \ge T$, thus
yielding the convergence of $\CP^n(z,\cdot\,)$ to $\mu_\star$ as
required.

Let us now extend this argument to more general starting points and
fix an arbitrary $z \in \supp \mu_\star$ and $\eps>0$. Since $A$ is
dense in $\supp \mu_\star$, it follows from our assumption on $\Gamma$
that there exists $z' \in \X$ such that $\Gamma_{z,z'}(\CD) \ge
1-\eps$. In particular, we can find a time $T_2$ such that
$\Gamma_{z,z'}(\CD_{T_2}^\eps) \ge 1-2\eps$.  Since on the other hand,
we know that $\CP^n(z',\cdot\,) \to \mu_\star$ weakly, it immediately
follows that we can find $T_3 \ge T_2$ and for each $n \ge T_3$ a
coupling $\Gamma$ between $\CP^n(z,\cdot\,)$ and $\mu_\star$ such that
$\Gamma(\{(y,y')\,:\, d(y,y') \le 2\eps\}) > 1-3\eps$, thus concluding
the proof.
\end{proof}

\subsection{More  convergence  results: rates and convergence for all initial conditions}
In this section, we continue to develop the ideas of the previous
sections to show how to obtain the convergence of the transition
probabilities for \textit{every} initial condition and how to obtain a
rate of convergence. We essentially follow the ideas laid out in
\cite{MatNS}. They are sufficent to give exponential convergence, but
not a convergence in any operator norm.  Here we will not attempt to
apply the results to our SDDE setting since
Theorem~\ref{thm:harrisFirst} provides stronger results in our
setting of interest. Nonetheless, the ideas and imagery presented in
this section is useful to build intuition. It is also a useful technique in situations where exponential convergence doesn't hold.

Define the 1-Wasserstein distance for two probability measures $\mu_i$ on $\X$ by 
\begin{align*}
  d_1(\mu_1,\mu_2) = \sup_{f \in \text{Lip}_1} \int f(x) \mu_1(dx)- \int f(x) \mu_2(dx) \;,
\end{align*}
where $\text{Lip}_1$ are the Lipschitz functions $f\colon \X
\rightarrow \R$ with Lipschitz constant one.

Let $\mu_1, \mu_2$ be two probability measures on $\X$ and $\Gamma \in
\CC(\cP_{[\infty]} \mu_1,\cP_{[\infty]} \mu_2)$ and let $Z
\in \X^\infty \times \X^\infty$, with
$Z_n=(Z_n^{(1)},Z_n^{(2)})$, be the stochastic process on $\X
\times \X$ with paths distributed as $\Gamma$. Let
$\mathcal{G}_n$ be the $\sigma$-algebra generated by $Z_n$ and
$\Pi_n$ be the projection on $\X^\infty \times \X^\infty$ defined by
$(\Pi_n x)_k = x_{k+n}$.

We say that $\Gamma \in \CC(\cP_{[\infty]} \mu_1,\cP_{[\infty]}
\mu_2)$ is a \textit{marginally Markovian coupling} if for any stopping time
$\tau$ (adapted to $\mathcal{G}_n$), the conditional distribution $\Law(\Pi_\tau Z\,|\,\CG_\tau)$ of $\Pi_\tau Z$ given $\CG_\tau$
belongs almost surely  to $\CC(\cP_{[\infty]}\delta_{Z^{(1)}_\tau},\cP_{[\infty]}\delta_{Z^{(2)}_\tau})$, 
where $Z_n=(Z_n^{(1)},Z_n^{(2)})$. Notice that
this is weaker than assuming that $Z_n$ is a Markov process.

Let  $\rho : \N \rightarrow (0,\infty)$ be
a strictly decreasing function with $\lim_n \rho(n)=0$. 
We define the ``neighborhood'' of the diagonal
\begin{align*}
  \Delta_\rho= \{ (x^{(1)},x^{(2)}) \in \X^\infty \times \X^\infty :
  d(x_n^{(1)}, x_n^{(2)}) < \rho(n) \}\;.
\end{align*}
For any stochastic process $Z$ on $\X \times \X$ with
$Z_n=(Z_n^{(1)},Z_n^{(2)})$, we define the stopping time  
\begin{equ}[e:deftau]
  \tau_\rho(Z) = \inf\{ n \ge 1\,:\, d(Z_n^{(1)}, Z_n^{(2)}) \geq
  \rho(n) \}
\end{equ}
and for $B \in \X \times \X$ we define the hitting time 
\begin{equ}[e:defsigma]
  \sigma_B(Z)=\inf\{ n \geq 0 : Z_n \in B   \} \ .  
\end{equ}

\begin{theorem}\label{thm:densConverge} 
Consider a Markov operator $\CP$ as before over a Polish space $\X$ with distance function $d \le 1$.

Fix a strictly decreasing rate function $\rho$ and a set
  $B \subset \X \times \X$. Assume that there exists a measurable map
  $z_0 \mapsto \Gamma_{z_0} \in \CC(\cP_{[\infty]}
  \delta_{z_0^{(1)}},\cP_{[\infty]} \delta_{z_0^{(2)}})$ where $z_0=(z^{(1)}_0,z^{(2)}_0) \in \X \times \X$, which is a
  marginally Markovian coupling such that,  if $Z$ with
  $Z_n=(Z_n^{(1)},Z_n^{(2)})$,  is distributed as $\Gamma_{z_0}$ then the
  following assumptions hold:
  \begin{enumerate}
  \item If $z_0 \not \in B$, then $\sigma_B(Z)$ is finite almost surely.
  \item There exists an $\alpha >0$ so that if $z_0\in B$ then
    $\Gamma_{z_0}( \Delta_\rho) \geq \alpha$.
  \end{enumerate}
Then for all $(z_0^{(1)},z_0^{(2)}) \in \X \times \X$,
\begin{align*}
  d_1( \cP_n \delta_{z_0^{(1)}} ,\cP_n \delta_{z_0^{(2)}}) \rightarrow 0 
\quad\text{ as } \quad n \rightarrow 0\ . 
\end{align*}
\end{theorem}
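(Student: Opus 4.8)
The plan is to reduce the statement to the construction, for each pair $z_0=(z_0^{(1)},z_0^{(2)})\in\X\times\X$, of a single coupling $\widehat\Gamma_{z_0}\in\CC(\cP_{[\infty]}\delta_{z_0^{(1)}},\cP_{[\infty]}\delta_{z_0^{(2)}})$ under which the two coordinate processes converge to one another almost surely. Granting this, let $Z=(Z^{(1)},Z^{(2)})$ have law $\widehat\Gamma_{z_0}$; then $Z_n^{(i)}\sim\cP_n\delta_{z_0^{(i)}}$, so directly from the definition of $d_1$ (test against $1$-Lipschitz functions) one has $d_1(\cP_n\delta_{z_0^{(1)}},\cP_n\delta_{z_0^{(2)}})\le\E\,d(Z_n^{(1)},Z_n^{(2)})$, and since $d\le 1$ and $d(Z_n^{(1)},Z_n^{(2)})\to 0$ almost surely, dominated convergence finishes the proof.

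To build $\widehat\Gamma_{z_0}$ I would iterate the given family $\{\Gamma_z\}$ along a sequence of restart times, alternating two kinds of phases. In a \emph{travel phase}, starting from the current point $v$, run a fresh copy of $\Gamma_v$ until the hitting time $\sigma_B$ of $B$; by assumption~(1) this time is almost surely finite (and equals $0$ when $v\in B$). In an \emph{attempt phase}, having just entered $B$ at a point $v\in B$, run a fresh copy of $\Gamma_v$: with probability at least $\alpha$ one lands in $\Delta_\rho$, i.e.\ $\tau_\rho=\infty$ (assumption~(2)), and in that case the attempt \emph{succeeds} and $\Gamma_v$ is let run forever; otherwise $\tau_\rho<\infty$, we \emph{restart} at the stopping time $\tau_\rho$, from the point $Z_{\tau_\rho}$, and pass to the next phase (an attempt again if $Z_{\tau_\rho}\in B$, a travel phase otherwise). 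Every restart occurs at a stopping time of the joint filtration, at which point a fresh copy of $\Gamma_{(\cdot)}$ based at the current position is grafted on.

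The step I expect to require genuine care — and which is the reason the \emph{marginally Markovian} assumption is imposed — is that this repeated grafting does not disturb the two marginal laws, so that $\widehat\Gamma_{z_0}$ is honestly a coupling of $\cP_{[\infty]}\delta_{z_0^{(1)}}$ and $\cP_{[\infty]}\delta_{z_0^{(2)}}$. Indeed, for a marginally Markovian coupling the conditional law of the shifted path $\Pi_\tau Z$ given $\CG_\tau$ already belongs to $\CC(\cP_{[\infty]}\delta_{Z_\tau^{(1)}},\cP_{[\infty]}\delta_{Z_\tau^{(2)}})$; replacing it by $\Gamma_{Z_\tau}$ therefore modifies the conditional law only within the class of couplings of those two point-mass path laws, leaving both marginals untouched, and a routine strong-Markov bookkeeping shows that the spliced coupling is again marginally Markovian, so the procedure may be iterated and passed to the limit (on the exceptional event that no attempt ever succeeds the restart times increase to $+\infty$, so the infinite splice is well defined). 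I would isolate this as a small lemma.

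It then remains to check that $d(Z_n^{(1)},Z_n^{(2)})\to 0$ almost surely under $\widehat\Gamma_{z_0}$. There are only countably many travel phases and each is almost surely finite, so almost surely every attempt is reached; by construction, conditionally on reaching the $k$-th attempt the process starts afresh from $\Gamma_v$ with $v\in B$, so by assumption~(2) the conditional probability that this attempt fails is at most $1-\alpha$. Hence $\P(\text{attempts }1,\dots,k\text{ all fail})\le(1-\alpha)^k$, which tends to $0$, so almost surely some attempt succeeds, say the $K$-th with $K<\infty$; it begins at an almost surely finite time $S$, after which $d(Z_{S+n}^{(1)},Z_{S+n}^{(2)})<\rho(n)$ for every $n\ge 1$. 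Since $\rho(m-S)\to 0$ as $m\to\infty$ for fixed finite $S$, this yields $d(Z_m^{(1)},Z_m^{(2)})\to 0$ almost surely, completing the argument.
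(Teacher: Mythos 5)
Your proposal is correct and follows essentially the same route as the paper: splice fresh copies of $\Gamma$ at the exit/return stopping times, use the marginally Markovian property to argue that each splice preserves both marginal path laws, and bound the number of failed attempts geometrically by $(1-\alpha)^k$ so that the coupled trajectories are eventually trapped in $\Delta_\rho$ after an almost surely finite time. The only cosmetic difference is the final step, where you invoke dominated convergence from almost sure convergence, whereas the paper keeps the quantitative estimate $\P(t^* > t/2) + \rho(t/2)$, which it then reuses in the subsequent corollary to obtain a rate.
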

\begin{proof}
  To prove the result, we will construct a new coupling on $\X^\infty
  \times \X^\infty$ from the coupling $\Gamma$. We will do this by
  constructing a process on excursions from $B$. 
  The state space of our process of excursions will be given by: 
  \begin{align*}
    \mathbb{X} =\bigcup_{k=0}^\infty \big( (\X^{k} \times
    \X^{k}) \times B)) \quad \text{and}\quad
    \overline{\mathbb{X}}=\big(\X^\infty \times \X^\infty\big) \cup
    \mathbb{X}\ .
  \end{align*}
  In words, $\mathbb{X}$ is the space of finite (but arbitrary) length trajectories taking values in
  the product space $\X \times \X$ and ending in
  $B$. The space $\overline{\mathbb{X}}$ furthermore contains
  trajectories in $\X \times \X$ of infinite length.

  To build the process on excursions, we begin by constructing a Markov transition
  kernel $Q\colon B \rightarrow\cM(\overline{\mathbb{X}})$ from the
  $\Gamma$'s in the following way. 
  For any $z \in B$, let $Z'$ be a $(\X^\infty \times  \X^\infty)$-valued random variable distributed according 
  to the measure $\Gamma_z$ and set $\tau=\tau_\rho(Z')$ with $\tau_\rho$ as in \eref{e:deftau}. 
  If $\tau = \infty$, then we set $Z = Z'$. If $\tau < \infty$ and $Z'_\tau \in B$, then we set $Z=(Z'_1,\cdots,Z'_\tau)$. 
  Otherwise, let $Z''$ be the $(\X^\infty \times \X^\infty)$-valued random variable distributed according 
  to the measure $\Gamma_{Z'_\tau}$.  Since $\Gamma$ is marginally Markovian and $\tau$ is a 
  stopping time, we know that the law of $\Pi_\tau{Z'_\tau}$ is a coupling of $\cP_{[\infty]}\delta_{Z^{(1)}_\tau},$ 
  and $\cP_{[\infty]}\delta_{Z^{(2)}_\tau}$ almost-surely. Hence we can replace the trajectory of 
  $Z'$ after time $\tau$ with the piece of trajectory $Z''$ and the combined 
  trajectory will still be a coupling starting from the initial data. Setting $\sigma=\sigma_B(Z'')$ (which   is finite almost surely)
  we define $Z$ by $Z=(Z'_1,\cdots,Z_\tau',Z_1'',\cdots,Z_\sigma'')$.
  
  In all cases, $Z$ is either a trajectory of infinite length contained
  in $\Delta_\rho$ or it is a segment of finite length ending in the
  set $B$. Hence $Z$ is a $\overline{\mathbb{X}}$-valued random
  variable and we define $Q_z(\ccdot)$ to be the distribution of $Z$.
  To extend the definition to a kernel $\CQ\colon \mathbb{X}
  \rightarrow\cM(\overline{\mathbb{X}})$, we simply define $\CQ_Z$ for
  $z=(z_1,\cdots,z_k) \in \mathbb{X}$ by $\CQ_Z = Q_{z_k}$ since all
  trajectories in $\mathbb{X}$ terminate in $B$.

  We now construct our Markov process on $\overline{\mathbb{X}}$ which we will
  denote by $\CZ_n=(Z_n^{(1)},Z_n^{(2)})$. We use $\CQ$ which
  was constructed in the preceding paragraph as the Markov
  transition kernel. If ever the segment drawn is of infinite length, then the
  process simply stops.  The only missing element is the initial
  condition. If $z_0 \in B$ then we take $\CZ_0=z_0$. If $z_0 \not \in
  B$, then we take $\CZ_0=(z_0,Z'_1,\cdots,Z'_\sigma)$ where $Z'$ is an
  $\mathbb{X}$-valued random variable distributed according to
  $\Gamma_{z_0}$ and $\sigma=\sigma_B(Z')$.

  Now we define $l_n$ to be the length of $\CZ_n$. Let $n^*=\inf\{n:
  l_n=\infty\}$. Since for all $z \in B$, $\Gamma_z(\Delta_\rho) \geq
  \alpha>0$ the $\P(n^* > n) \leq (1-\alpha)^n$ and thus $n^*$
  is almost surely finite.  Lastly we define $t^*=\sum_{k=0}^{n^*-1}
  l_k$. Since $n^*$, and the $l_k$ are all almost surely finite, one
  sees that $t^*$ is almost surely finite.

  Finally, we are ready to perform the desired calculation. We will
  denote by $Z_t=(Z_t^{(1)},Z_t^{(2)})$ the trajectory in $\X^\infty
  \times \X^\infty$ obtained by concatenating together the segments
  produced by running the Markov chain $\mathcal{Z}_n$ constructed in
  the preceding paragraph.
  For any $f \in \textrm{Lip}_1(\X)$ one has
  \begin{align}\label{eq:convEst}
    \E f(Z_t^{(1)}) - \E f(Z_t^{(2)}) &\leq \E d(Z_t^{(1)},Z_t^{(2)})(\one_{t^* > t/2} + \one_{t^* \leq t/2} )\notag \\
    &\leq \P( t^* > t/2) +  \rho(t/2) \;.
  \end{align}
  Observe that the right hand side is uniform for any $f \in
  \textrm{Lip}_1$.  By assumption $\rho(t/2) \rightarrow 0$ as $t
  \rightarrow \infty$ and since $t^*$ is almost surely finite $\P( t^*
  > t/2) \rightarrow 0$ as $t \rightarrow \infty$.\end{proof}

The following corollary gives a rate of convergence assuming one can
control a appropriate moment of $t^*$. As in \cite{H,MatNS}, this is often done by assuming an appropriate Lyapunov structure. This result \textit{does not} prove convergence in any operator norm. In this way, it is inferior to  Theorem~\ref{thm:harrisFirst} which we prefer. (The fact that the norm does not allow test functions $f \leq V$ can be rectified with more work under additional assumptions.) 
\begin{corollary} In the setting of Theorem~\ref{thm:densConverge},
  let $t^*(z_0)$ be the stopping time defined in the proof of
  Theorem~\ref{thm:densConverge} when starting from
  $z_0=(z_0^{(1)},z_0^{(2)}) \in \X \times \X$. If
  $\E(1/\rho(t^*(z_0))) < \Phi(z_0)$ for some $\Phi :\X\times \X \rightarrow
  (0,\infty)$, then
  \begin{align*}
    d_1( \cP_t \delta_{z_0^{(1)}} ,\cP_t \delta_{z_0^{(2)}}) \leq
    \big(1+ \Phi(z_0)\big) \rho(t/2)
  \end{align*}
\end{corollary}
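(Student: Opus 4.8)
The plan is to read off the desired rate directly from the proof of Theorem~\ref{thm:densConverge}, where essentially all of the work has already been done. Recall that in that proof one constructs, for any pair of starting points $z_0 = (z_0^{(1)}, z_0^{(2)})$, an almost surely finite stopping time $t^* = t^*(z_0)$ together with a concatenated coupling process $(Z_t^{(1)}, Z_t^{(2)})$ whose marginals are $\cP_t \delta_{z_0^{(1)}}$ and $\cP_t \delta_{z_0^{(2)}}$, and one shows (this is \eref{eq:convEst}) that for every $f \in \text{Lip}_1(\X)$,
\begin{equ}
  \E f(Z_t^{(1)}) - \E f(Z_t^{(2)}) \le \P(t^* > t/2) + \rho(t/2)\;.
\end{equ}
Since the right-hand side does not depend on $f$, taking the supremum over $f \in \text{Lip}_1$ already yields $d_1(\cP_t \delta_{z_0^{(1)}}, \cP_t \delta_{z_0^{(2)}}) \le \P(t^* > t/2) + \rho(t/2)$. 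Thus the only remaining task is to convert the qualitative statement ``$\P(t^* > t/2) \to 0$'' into the quantitative bound $\P(t^* > t/2) \le \Phi(z_0)\,\rho(t/2)$.

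For this I would invoke a Markov (Chebyshev) inequality applied to the nonnegative random variable $1/\rho(t^*)$. Since $\rho$ is strictly decreasing, the map $s \mapsto 1/\rho(s)$ is nondecreasing (extending $\rho$ to a decreasing function on $(0,\infty)$ so that $\rho(t/2)$ makes literal sense; alternatively one replaces $t/2$ by $\lfloor t/2 \rfloor$ throughout, which only sharpens the estimate). Hence $\{t^* > t/2\} \subseteq \{1/\rho(t^*) \ge 1/\rho(t/2)\}$, and Markov's inequality together with the hypothesis $\E\bigl(1/\rho(t^*(z_0))\bigr) < \Phi(z_0)$ gives
\begin{equ}
  \P(t^* > t/2) \le \P\bigl(1/\rho(t^*) \ge 1/\rho(t/2)\bigr) \le \rho(t/2)\,\E\bigl(1/\rho(t^*)\bigr) \le \Phi(z_0)\,\rho(t/2)\;.
\end{equ}
Substituting this into the bound from the first paragraph yields $d_1(\cP_t \delta_{z_0^{(1)}}, \cP_t \delta_{z_0^{(2)}}) \le \bigl(1 + \Phi(z_0)\bigr)\rho(t/2)$, which is the claim.

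I do not expect any genuine obstacle: the corollary is a direct quantitative refinement of the final lines of the proof of Theorem~\ref{thm:densConverge}, and the only point requiring a modicum of care is the bookkeeping around the argument $t/2$ of $\rho$ (integer versus real) and the fact that strict versus non-strict inequalities are immaterial in the Markov step. The real content of such a statement is, of course, displaced to the applications: one must exhibit a function $\Phi$ for which $\E\bigl(1/\rho(t^*(z_0))\bigr)$ can actually be controlled, and as noted just before the corollary this is typically achieved through an appropriate Lyapunov structure, in the spirit of \cite{H,MatNS}.
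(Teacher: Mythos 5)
Your proof is correct and follows essentially the same route as the paper: it applies Markov's inequality to the nonnegative variable $1/\rho(t^*)$ to get $\P(t^* > t/2) \le \rho(t/2)\,\Phi(z_0)$ and then plugs this into the estimate \eref{eq:convEst} from the proof of Theorem~\ref{thm:densConverge}. Your extra remarks on the monotonicity of $1/\rho$ and the integer-versus-real argument of $\rho$ are harmless bookkeeping that the paper leaves implicit.
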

\begin{proof}
  First observe that the Markov inequality implies that $\P( t^* >
  t/2)= \P(1/\rho(t^*) > 1/\rho(t/2)) \leq \rho(t/2)\E( 1/\rho(t^*))
  \leq \rho(t/2) \Phi(z_0)$. Returning to \eqref{eq:convEst}, we see
  that this estimate completes the proof of the desired result.
\end{proof}

\begin{remark}
For this result to be useful, one needs control over $\E(1/\rho(t^*(z_0)))$. First observe that since $t^*=\sum_{k=0}^{n^*-1} l_k$ and 
$\P( n^* > k) \leq (1-\alpha)^k$ the main difficulty is controlling the appropriate moment of $l_n$. $l_n$ consists of two parts. The 
first is $\tau$, the time to exit $\Delta_\rho$, and the second is $\sigma$ the time to return to $B$. The moments of $\tau$ depend 
on how quickly $\Gamma(\Delta^n_\rho) - \Gamma(\Delta_\rho)$ goes to zero as a function of $n$ where 
$\Delta_\rho^n= \{ (x^{(i)},x^{(2)}) \in \X^\infty \times \X^\infty : d(x_k^{(1)}, x_k^{(2)}) < \rho(k) \text{ for all $k \leq n$}\}$. 
This gives information about how long it takes for $Z$ to leave $\Delta_\rho$ when it is conditioned not to stay in inside 
$\Delta_\rho$ for all times. If this exit time has heavy tails, then it can retard the convergence rate. The return times to $B$ also 
influences the convergence rate. Such a return time is often controlled by a Lyapunov function. 
In the context of obtaining exponential convergence, some of these points are explored in \cite{MatNS,H}.
Subexponential convergence rates via Lyapunov functions has been explored for Harris chains in \cite{MT,vere,BakCatGui08:727,DFG06SG}. 
\end{remark}

 \section{Application of the uniqueness and convergence criteria to
   SDDEs}
\label{sec:uniqueness}
\subsection{Application of the uniqueness criterion to SDDEs}
Fix $r > 0$ and let $\CC = \CC([-r,0],\R^d)$ denote the phase space of a
general finite-dimensional delay equation with delay $r$ endowed with
the sup-norm $\|\cdot\|$.  For a function or a process $X$ defined on
$[t-r,t]$ we write $X_t(s):=X(t+s)$, $s \in [-r,0]$. Consider the
following stochastic functional differential equation:
\begin{equation}\label{e:sdde2}
  \begin{aligned}
    \dd X(t) &= f(X_t)\,\dd t + g(X_t)\,\dd W(t),\\
  X_0 &=\eta\in \mathcal{C}, 
  \end{aligned}
\end{equation}
where $f\colon\C \rightarrow \R^d$ and
$g\colon\C\rightarrow\R^m\times\R^d$ and
$W_t=(W^{(1)}_t,\cdots,W^{(m)}_t)$ is a standard Wiener process. 

We will provide conditions on $f$ and $g$ which ensure that, for every
initial condition $X_0=\eta \in \C$, equation (\ref{e:sdde2}) has a
unique pathwise solution which can then be viewed as a $\C$-valued
strong Markov process. The problem of existence and/or uniqueness of
an invariant measure of such a process (or similar processes) has been
addressed by a number of authors, see for example
\cite{IN,S84,BM,RRG,EGS09}.

While existence of an invariant measure has been proven under natural
sufficient conditions on the functionals $f$ and $g$, the uniqueness
question, as already mentioned in the introduction, has not been
answered up to now even in such simple cases as
\begin{equ}
f(x)=-c \,x(0)\;,\quad g(x)=\psi(x(-r))\;,\qquad d=m=1\;, 
\end{equ}
for some $c>0$ and $\psi$ a strictly positive, bounded and strictly
increasing function \cite{RRG}. One difficulty is that the
corresponding Markov process on $\C$ is not strong Feller. Even worse:
given the solution $X_t$ for any $t>0$, the initial condition $X_0$
can be recovered \textit{with probability one} \cite{S05}.  Another
peculiarity of such equations is that while they do in general
generate a Feller semigroup on $\C$, they often do not admit a
modification which depends continuously on the initial condition --
even if $g$ is linear (see e.g. \cite{M} and \cite{MS}), so that they
do not generate a stochastic flow of homeomorphisms. We do
nevertheless have the following uniqueness result:

\begin{theorem}\label{thm:weakErgodicity}
Assume that $m \ge d > 0$ and that, for every $\eta \in \CC$, $g(\eta)$ admits a right inverse $g^{-1}(\eta)\colon \R^d \to \R^m$.
  If $f$ is continuous and bounded on bounded subsets of $\C$, and for
  some $K\geq 1$, $f$ and $g$ satisfy
\begin{equation*}
\sup_{\eta \in \mathcal{C}} \|g^{-1}(\eta)\| < \infty\;,
\end{equation*}
and 
\begin{equation*}
  2  \scal{f(x) - f(y),x(0)-y(0)}^+ +  \Norm{g(x)-g(y)}^2  \le K \|x-y\|^2\;,
\end{equation*}
where $\Norm{M}^2=\mathrm{Tr}(MM^*)$, then the equation
\eqref{e:sdde2} has at most one invariant measure.
\end{theorem}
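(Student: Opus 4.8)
The plan is to apply Corollary~\ref{cor:coupling}: we need to produce, for each pair of initial conditions $x,y$ in a suitable set $A$ of full measure (here we may in fact take $A = \CC$, since any invariant measure of a well-posed SDDE gives positive mass to the whole space, or at least work on a large enough ball), a coupling $\Gamma_{x,y} \in \widetilde\CC(\cP_{[\infty]}\delta_x, \cP_{[\infty]}\delta_y)$ with $\Gamma_{x,y}(\CD) > 0$, where $\CD$ is the diagonal at infinity in $\CC^\infty$. The coupling will be built by a shift of the driving Wiener process, in the spirit of \cite{EMS,BM}: run the first copy $X$ with the original noise $W$, and run the second copy $Y$ driven by $W + \int_0^\cdot v(s)\,\dd s$ for a carefully chosen adapted control $v$, where the control is active only on a finite time interval $[0,T]$ (or decays fast enough) so that Girsanov's theorem gives equivalence of laws on path space. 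On the infinite product $\CC^\infty$ one only needs absolute continuity of each marginal with respect to $\cP_{[\infty]}\delta_x$ resp.\ $\cP_{[\infty]}\delta_y$, which is exactly what a finite-horizon Girsanov shift delivers.

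The concrete steps I would carry out are: (1) Establish pathwise existence and uniqueness of solutions to \eqref{e:sdde2} under the stated hypotheses ($f$ continuous and bounded on bounded sets, the one-sided Lipschitz-type monotonicity bound controlling $2\scal{f(x)-f(y),x(0)-y(0)}^+ + \Norm{g(x)-g(y)}^2$), so that $\cP$ is a well-defined Feller Markov operator on $\CC$; the monotonicity bound is precisely what gives uniqueness and non-explosion via a Gronwall/It\^o argument on $\|X_t - Y_t\|^2$. (2) Introduce the control: set $v(t) = \lambda\, g^{-1}(Y_t)\bigl(X(t) - Y(t)\bigr)$ for $t \in [r, T]$ say (after an initial lag $r$ so the delay structure can transfer information), and $v = 0$ otherwise; here $g^{-1}(\eta)$ is the bounded right inverse assumed to exist, and $\lambda > 0$ is a large parameter. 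The point of the right inverse is that $g(Y_t) v(t) = \lambda (X(t)-Y(t))$, so the shifted equation for $Y$ acquires an extra drift $\lambda(X(t)-Y(t))$ pulling $Y(t)$ toward $X(t)$. (3) Derive the differential inequality for $R(t) := X(t) - Y(t)$: using It\^o's formula on $\|R(t)\|^2$ (note these are $\R^d$-valued, pointwise-in-the-delay-variable the state is $\|R_t\| = \sup_{s\in[-r,0]}|R(t+s)|$), the cross term from $f$ is controlled by the monotonicity hypothesis, the $g$-difference contributes to the quadratic variation again controlled by the hypothesis, the diffusion terms match up (both copies driven by the same $W$, only the drifts differ), and the new term $-2\lambda\scal{R(t),R(t)}$ gives strong contraction. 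For $\lambda$ large enough this shows $\|R_t\| \to 0$ almost surely, in fact $\E\|R_t\|^2 \to 0$ exponentially on $[r,T]$ and then, once $v$ is switched off, the monotonicity bound alone keeps $\|R_t\|^2$ from growing faster than $e^{Kt}$ — so one chooses $T$ large enough that $\|R_T\|$ is small and then appeals to a further asymptotic-stability argument, or better, keeps $v$ switched on forever but with $\int_0^\infty |v(t)|^2\,\dd t < \infty$ a.s.\ so Girsanov still applies on the whole half-line. Here the cleaner route is the latter: since $\|R_t\|$ decays exponentially under the feedback, $|v(t)|^2 \le \lambda^2 \sup\|g^{-1}\|^2 \|R_t\|^2$ is integrable, Novikov/Girsanov gives that $\Law(Y)$ restricted to any finite window is equivalent to $\cP_{[\infty]}\delta_y$, hence $\PI^{(2)}\push\Gamma_{x,y} \ll \cP_{[\infty]}\delta_y$, while $\PI^{(1)}\push\Gamma_{x,y} = \cP_{[\infty]}\delta_x$ exactly. (4) Verify measurability of $(x,y)\mapsto\Gamma_{x,y}$ and conclude via Corollary~\ref{cor:coupling} (with $A = \CC$, or a ball exhausting $\CC$) that there is at most one invariant measure.

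The main obstacle, I expect, is step (3) — making the energy estimate rigorous in the delay setting. The subtlety is that the feedback $\lambda(X(t)-Y(t))$ controls the \emph{present} value $R(t)\in\R^d$, but the Lyapunov-type quantity that must decay is the sup-norm $\|R_t\| = \sup_{s\in[-r,0]}|R(t+s)|$ over the whole delay window, and $f(X_t)-f(Y_t)$ depends on the entire history. One has to run the feedback long enough (at least a full delay length $r$, and really until the sup-norm has decayed, not just the present value) and combine the pointwise It\^o estimate for $|R(t)|^2$ with a careful bookkeeping of how the history window slides: typically one shows $|R(t)|^2 \le e^{-c(t-r)}(\text{something})$ for $t \ge r$ by Gronwall, then propagates this to get $\|R_t\| \to 0$. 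A second, more technical point is ensuring $\int_0^\infty|v(t)|^2\,\dd t<\infty$ holds \emph{almost surely} (so that Girsanov applies pathwise via a localization/stopping-time argument even without a uniform Novikov bound), for which one uses that the integrand is dominated by $\|R_t\|^2$ which is pathwise summable once exponential decay is established — this is a standard but slightly delicate point in this line of work (cf.\ \cite{EMS,BM}). The one-sided (``$+$'') structure in the monotonicity hypothesis is exactly what lets the cross term be dropped whenever it has the favorable sign, and is essential for the estimate to close without any Lipschitz assumption on $f$.
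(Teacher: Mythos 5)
Your overall strategy is the same as the paper's: a binding drift $\lambda\bigl(X(t)-\widetilde X(t)\bigr)$ implemented through the bounded right inverse of $g$, an energy estimate showing the two copies converge, a Girsanov shift of the driving noise, and a conclusion via Corollary~\ref{cor:coupling}. The genuine gap is in your step (3)--(4), the Girsanov argument. First, what $\widetilde\cC$ requires is $\PI^{(2)}\push\Gamma_{x,y} \ll \cP_{[\infty]}\delta_y$ on the \emph{infinite} product, i.e.\ absolute continuity of the law of the shifted solution on the whole half-line; your statement that ``$\Law(Y)$ restricted to any finite window is equivalent to $\cP_{[\infty]}\delta_y$'' is strictly weaker and does not suffice (Brownian motion with and without constant drift are locally equivalent but globally singular -- this is exactly the point stressed in the remark after Theorem~\ref{thm:mainSimpleErgodic}). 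Second, almost sure finiteness of $\int_0^\infty |v(s)|^2\,\dd s$ does not by itself deliver the infinite-horizon change of measure: localization gives equivalence of the stopped laws, but passing to the limit requires uniform integrability of the density martingale, and Novikov's condition would require an exponential moment of the total energy, which you have not established -- Lemma~\ref{lem:contraction} only provides polynomial (eighth) moments of $\sup_t e^{\gamma_0 t}\|Z_t\|$. So as written, the absolute-continuity claim that feeds Corollary~\ref{cor:coupling} is unjustified. (Your alternative of switching the control off at a finite time $T$ does not work either, as you half-acknowledge: after the switch-off the one-sided bound only prevents faster-than-exponential separation, so $\Gamma_{x,y}(\CD)>0$ is lost.)

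The paper closes exactly this gap with a cutoff in \emph{energy} rather than in time: the shift is stopped at $\tau = \inf\{t \ge 0 : \int_0^t |v(s)|^2\,\dd s \ge \eps^{-1}\|\eta-\tilde\eta\|^2\}$, so the Girsanov exponent is deterministically bounded, Novikov applies trivially, and the law of $W + \int_0^{\cdot\wedge\tau} v\,\dd s$ is equivalent to Wiener measure on all of $[0,\infty)$; consequently the law of the process $\overline X$ driven by the shifted noise is equivalent (not just locally) to $\cP_{[\infty]}\delta_{\tilde\eta}$. The price is that the binding drift may be switched off, so the paths only converge on $\{\tau=\infty\}$; but Lemma~\ref{lem:contraction} together with Chebyshev shows $\P\{\tau=\infty\}$ is bounded below uniformly in the initial conditions once $\eps$ is small, and $\Gamma_{x,y}(\CD)>0$ is all that Corollary~\ref{cor:coupling} needs. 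If you want to avoid the cutoff and keep the control on forever, you would have to prove a genuine infinite-horizon Girsanov lemma under merely a.s.\ finite energy, which is a nontrivial statement, not a routine application of Novikov. Separately, your energy estimate is only sketched: because the right-hand side of the one-sided bound involves the sup-norm $\|Z_t\|$ over the whole delay window, a naive Gronwall on $|Z(t)|^2$ does not close; the paper needs the auxiliary stochastic-convolution estimate of Lemma~\ref{lemma} (the $\rho(\lambda)\to 0$ bound) to beat the window term for large $\lambda$ -- you correctly identify this as the delicate point, but it does require that extra ingredient.
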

\begin{remark}\label{rem:global}
  This assumption is sufficient to ensure the existence of (unique)
  global solutions (see \cite{RS08} for even weaker hypotheses).
\end{remark}
\begin{remark}
  Notice that Theorem~\ref{thm:weakErgodicity} does not ensure that
  there exists an invariant measure only that there exists at most
  one. Even when there is no invariant (probability) measure, the
  ideas in Theorem~\ref{thm:weakErgodicity} can be used to show that
  solutions with nearby initial conditions behave asymptotically the
  same with a positive probability.
\end{remark}

We will use the following two lemmas which are proven at the end of
the section. The first one is similar to Proposition 7.3 in \cite{DZ}.
\begin{lemma}\label{lemma}
  Let $W(t)$, $t \ge 0$ be a standard Wiener process and fix $T>0$ and
  $p>2$.  There exists a function $\rho:[0,\infty) \to [0,\infty)$
  satisfying $\lim_{\lambda \to \infty} \rho(\lambda)=0$ such that the
  following holds: let $Y$ satisfy the equation
\begin{align}\label{eq:lemma}
\dd Y(t)&= -\lambda Y(t) \,\dd t + h(t) \,\dd W(t),\\
Y(0) &= 0,\notag
\end{align}
where $h$ is an adapted process with almost surely c\`adl\`ag sample
paths. Then for any stopping time $\tau$ we have
$$
\E \Bigl( \sup_{0 \le t \le {\tau \wedge T}} |Y(t)|^p \Bigr) \le
\rho(\lambda) \E \Bigl( \sup_{0 \le t \le {\tau\wedge T}} |h(t)|^p
\Bigr).
$$
\end{lemma}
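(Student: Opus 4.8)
\emph{Reduction to a deterministic decay rate.} The plan is to start from the variation of constants representation $Y(t)=\int_0^t e^{-\lambda(t-s)}h(s)\,\dd W(s)$ (obtained by applying It\^o's formula to $e^{\lambda t}Y(t)$) and to estimate this stochastic convolution by the factorization method, following the proof of Proposition~7.3 in \cite{DZ}. First I would dispose of the stopping time: set $h^\tau(s)=h(s)\one_{[0,\tau)}(s)$, which is again adapted with c\`adl\`ag paths, and let $Y^\tau$ be the corresponding solution of \eqref{eq:lemma}. Then $Y^\tau(t)=Y(t)$ for $t\le\tau$ (the cut-off equals $1$ on $[0,t]$ in that range), while $\sup_{0\le t\le T}|h^\tau(t)|\le\sup_{0\le t\le\tau\wedge T}|h(t)|$, so any bound of the form $\E\sup_{[0,T]}|Y^\tau|^p\le\rho(\lambda)\,\E\sup_{[0,T]}|h^\tau|^p$ yields the statement for $Y$ and $\tau$. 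Hence it suffices to treat $\tau\equiv\infty$, and, discarding the trivial case, to assume $\E\sup_{0\le t\le T}|h(t)|^p<\infty$.

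\emph{Factorization.} Since $p>2$ we may fix an exponent $\alpha\in(1/p,1/2)$. Plugging the identity
\begin{equ}
e^{-\lambda(t-s)}=\frac{\sin\pi\alpha}{\pi}\int_s^t (t-\sigma)^{\alpha-1}(\sigma-s)^{-\alpha}\,e^{-\lambda(t-\sigma)}e^{-\lambda(\sigma-s)}\,\dd\sigma
\end{equ}
(which follows from $\int_s^t(t-\sigma)^{\alpha-1}(\sigma-s)^{-\alpha}\,\dd\sigma=\mathrm{B}(\alpha,1-\alpha)=\pi/\sin\pi\alpha$) into the representation of $Y(t)$ and interchanging the $\dd\sigma$ and $\dd W(s)$ integrations by the stochastic Fubini theorem gives
\begin{equ}
Y(t)=\frac{\sin\pi\alpha}{\pi}\int_0^t (t-\sigma)^{\alpha-1}e^{-\lambda(t-\sigma)}Z(\sigma)\,\dd\sigma\;,\qquad Z(\sigma):=\int_0^\sigma (\sigma-s)^{-\alpha}e^{-\lambda(\sigma-s)}h(s)\,\dd W(s)\;.
\end{equ}
Here $-2\alpha>-1$ ensures that $Z(\sigma)$ is, for each $\sigma$, a well-defined square-integrable random variable admitting a jointly measurable version, which is what makes the interchange legitimate under the standing moment bound on $h$.

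\emph{Two H\"older/Gamma estimates.} Applying H\"older's inequality in $\sigma$ with exponents $p$ and $q=p/(p-1)$, and using that $\alpha>1/p$ gives $(\alpha-1)q>-1$, hence $\int_0^t(t-\sigma)^{(\alpha-1)q}e^{-\lambda q(t-\sigma)}\,\dd\sigma\le\int_0^\infty u^{(\alpha-1)q}e^{-\lambda q u}\,\dd u=c_\alpha\,\lambda^{-((\alpha-1)q+1)}$, one obtains uniformly in $t\in[0,T]$ the bound $\sup_{[0,T]}|Y(t)|^p\le C_1(\alpha,p)\,\lambda^{-(p\alpha-1)}\int_0^T|Z(\sigma)|^p\,\dd\sigma$. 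Taking expectations, the Burkholder--Davis--Gundy inequality applied to $Z(\sigma)$, together with $|h(s)|\le\sup_{[0,T]}|h|$ and $\int_0^\sigma(\sigma-s)^{-2\alpha}e^{-2\lambda(\sigma-s)}\,\dd s\le c'_\alpha\,\lambda^{-(1-2\alpha)}$ (finite since $2\alpha<1$), yields $\E|Z(\sigma)|^p\le C_2(\alpha,p)\,\lambda^{-p(1-2\alpha)/2}\,\E\sup_{[0,T]}|h|^p$. Combining the two displays and integrating over $\sigma\in[0,T]$, the resulting power of $\lambda$ is $-(p\alpha-1)-\tfrac{p}{2}(1-2\alpha)=1-\tfrac{p}{2}<0$, so one may take $\rho(\lambda)=C(\alpha,p,T)\,\lambda^{1-p/2}$ for large $\lambda$ (extended arbitrarily for small $\lambda$, where the finite-interval versions of the two integrals already give a $\lambda$-independent bound), which tends to $0$ as $\lambda\to\infty$ precisely because $p>2$.

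\emph{Main obstacle.} The computation above is essentially routine once the right objects are in place; the only genuinely delicate points are the measure-theoretic ones — justifying the stochastic Fubini interchange and producing a jointly measurable version of $(\sigma,\omega)\mapsto Z(\sigma)$ with the required integrability — which I would handle exactly as in \cite{DZ}, the key quantitative input being the availability of the exponent $\alpha\in(1/p,1/2)$, which exists only because $p>2$ and which simultaneously keeps the Beta integral convergent ($\alpha<1$), the singular kernel square-integrable ($\alpha<1/2$), and the H\"older exponent admissible ($\alpha>1/p$).
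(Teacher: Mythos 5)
Your argument is correct, but it follows a genuinely different route from the one taken in the paper. You implement the Da Prato--Zabczyk factorization method (indeed, the paper only says the lemma is ``similar to'' Proposition 7.3 of [DZ], it does not reproduce that proof): after the same reduction of the stopping time by cutting off $h$, you insert the Beta-function identity into the variation-of-constants formula, interchange integrals by stochastic Fubini, and then combine a H\"older estimate in $\sigma$ (using $\alpha>1/p$) with a Burkholder--Davis--Gundy bound on $Z(\sigma)$ (using $\alpha<1/2$); the cancellation of $\alpha$ in the exponent, leaving $\rho(\lambda)\sim\lambda^{1-p/2}$, is a correct and reassuring feature, and your handling of the trivial case $\E\sup|h|^p=\infty$ and of small $\lambda$ is fine. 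The paper instead proceeds in an entirely elementary way: from the same representation $Y(t)=e^{-\lambda t}\int_0^t e^{\lambda s}h(s)\,\dd W(s)$ it first derives the pointwise-in-$t$ bound $\E|Y(t)|^p\le C_p\E(h^*)^p(2\lambda)^{-p/2}$ by Burkholder, then partitions $[0,T]$ into $N$ subintervals, writes $Y(t)=Y(t_k)e^{-\lambda(t-t_k)}+I_k(t)-\lambda\int_{t_k}^t e^{-\lambda(t-s)}I_k(s)\,\dd s$ with $I_k(t)=\int_{t_k}^t h\,\dd W$, and bounds the supremum by a maximum over grid points plus the suprema of the $I_k$, arriving at a coefficient $N(2\lambda)^{-p/2}+2T^{p/2}N^{1-p/2}$ which is made small by choosing first $N$ large (this is where $p>2$ enters) and then $\lambda$ large. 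What your approach buys is an explicit polynomial rate for $\rho$ and the standard, reusable stochastic-convolution machinery, at the price of invoking stochastic Fubini and joint-measurability facts that you rightly flag as the delicate points and outsource to [DZ]; what the paper's approach buys is complete self-containedness with only Burkholder and an integration by parts, at the price of stating $\rho(\lambda)\to0$ without an explicit rate (optimizing $N\sim\lambda$ in their bound would in fact recover the same order $\lambda^{1-p/2}$). Both proofs treat the stopping time identically, and for the purposes of Lemma~\ref{lem:contraction} only the qualitative statement $\lim_{\lambda\to\infty}\rho(\lambda)=0$ is used, so either version suffices.
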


\begin{lemma}\label{lem:contraction}
Let $\lambda > 0$, consider the coupled set of equations: 
  \begin{equs}[2][eq:modifiedSDE]
    \dd X(t) &= f(X_t)\,\dd t + g(X_t)\,\dd W(t)\;,\quad&\quad  X_0& =\eta\;, \\
    \dd \widetilde X(t) &= f(\widetilde X_t)\,\dd t + \lambda (X(t) -
    \widetilde X(t)) \,\dd t+ g(\widetilde X_t)\,\dd W(t)\;, &
    \widetilde X_0 &= \widetilde \eta \;,
  \end{equs}
  and define $Z(t):=X(t)-\widetilde X(t), \; t \ge -r$. Then, for
  every $\gamma_0>0$ there exist $\lambda > 0$ and $C>0$ such that the
  bound $\E \bigl(\sup_{t \ge 0} \e^{\gamma_0 t} \|Z_t\|\bigr)^8 \le (C
  \|Z_0\|)^8$ holds for any pair of initial conditions $X_0$ and
  $\widetilde X_0$.
\end{lemma}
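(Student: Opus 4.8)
The plan is to derive an It\^o equation for $Z(t) = X(t) - \widetilde X(t)$, extract a pathwise differential inequality for $|Z(t)|^2$, and then close the estimate in the $\sup$-norm over delay windows using the contraction rate $\lambda$ together with Lemma~\ref{lemma} to control the martingale part. First I would write, from \eref{eq:modifiedSDE},
\begin{equ}
  \dd Z(t) = \bigl(f(X_t) - f(\widetilde X_t)\bigr)\,\dd t - \lambda Z(t)\,\dd t + \bigl(g(X_t) - g(\widetilde X_t)\bigr)\,\dd W(t)\;,
\end{equ}
with $Z_0 = Z(0) = \eta - \widetilde\eta$. Applying It\^o's formula to $|Z(t)|^2$ and using the dissipativity hypothesis from Theorem~\ref{thm:weakErgodicity}, namely $2\scal{f(x)-f(y),x(0)-y(0)}^+ + \Norm{g(x)-g(y)}^2 \le K\|x-y\|^2$, gives
\begin{equ}
  \dd |Z(t)|^2 \le \bigl(K \|Z_t\|^2 - 2\lambda |Z(t)|^2\bigr)\,\dd t + 2\scal{Z(t), (g(X_t)-g(\widetilde X_t))\,\dd W(t)}\;.
\end{equ}
The key structural point is that $\|Z_t\|^2 = \sup_{s\in[-r,0]}|Z(t+s)|^2$ only involves the recent past, so once $\lambda$ is large relative to $K$ and $r$, the linear damping $-2\lambda|Z(t)|^2$ dominates the delayed forcing $K\|Z_t\|^2$.

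The core of the argument is a window-by-window comparison. I would split $[0,\infty)$ into intervals $I_j = [jr, (j+1)r]$ and define $M_j = \E\bigl(\sup_{t\in[(j-1)r,\,jr]}|Z(t)|\bigr)^8$ (or the analogous quantity with a running exponential weight). On each window, treating $h(t) := g(X_t) - g(\widetilde X_t)$ as the adapted c\`adl\`ag integrand and comparing $Z$ with the solution of the linear equation $\dd Y = -\lambda Y\,\dd t + h\,\dd W$ plus the drift remainder $f(X_t)-f(\widetilde X_t)$, Lemma~\ref{lemma} (with $p=8$, $T=r$) yields $\E\sup_{I_j}|Y|^8 \le \rho(\lambda)\,\E\sup_{I_j}|h|^8 \le \rho(\lambda) L^8 \E\sup_{\widetilde I_j}\|Z_t\|^8$ for a Lipschitz-type constant $L$ coming from the hypothesis (note $\Norm{g(x)-g(y)}^2 \le K\|x-y\|^2$ gives such control). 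The drift contribution is handled by Gr\"onwall together with the same dissipativity bound, producing a factor that is small for large $\lambda$; combining, one gets a recursion $M_{j+1} \le \kappa(\lambda) \,(M_j + M_{j-1})$ with $\kappa(\lambda) \to 0$ as $\lambda\to\infty$. Choosing $\lambda$ large enough that $\kappa(\lambda)$ makes the associated linear recursion contract at rate $\e^{-8\gamma_0 r}$, one sums the geometric series to obtain $\sum_j \e^{8\gamma_0 jr} M_j \le (C\|Z_0\|)^8$, and since $\sup_{t\ge 0}\e^{8\gamma_0 t}\|Z_t\|^8 \le \sum_j \e^{8\gamma_0(j+1)r}\sup_{I_j}|Z|^8$ up to a constant, this gives the claimed bound $\E\bigl(\sup_{t\ge0}\e^{\gamma_0 t}\|Z_t\|\bigr)^8 \le (C\|Z_0\|)^8$ after adjusting constants.

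The main obstacle I anticipate is \emph{bookkeeping the interaction between the delay structure and the contraction}: the forcing $\|Z_t\|$ at time $t$ reaches back into the previous window, so the recursion genuinely couples $M_{j+1}$ to both $M_j$ and $M_{j-1}$, and one must verify that for $\lambda$ large the two-step linear recursion has spectral radius below $\e^{-8\gamma_0 r}$ --- this is where the precise dependence of $\rho(\lambda)$ (from Lemma~\ref{lemma}) on $\lambda$ matters, and it forces $\lambda$ to be chosen after $\gamma_0$. A secondary technical point is making sure the drift term $f(X_t)-f(\widetilde X_t)$ is absorbed cleanly: one should use the $(\cdot)^+$ part of the dissipativity assumption inside the It\^o computation for $|Z|^2$ rather than trying to bound $|f(X_t)-f(\widetilde X_t)|$ directly (which is not controlled by the hypotheses, since only the one-sided inner-product bound is assumed and $f$ is merely continuous and bounded on bounded sets). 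Finally, one must check that all the stopping-time truncations needed to justify the It\^o manipulations and the use of Lemma~\ref{lemma} can be removed in the limit, which follows from the a priori moment bounds and monotone/Fatou arguments.
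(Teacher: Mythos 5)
Your plan is correct in substance and shares the paper's core ingredients: the It\^o inequality $\dd|Z(t)|^2 \le K\|Z_t\|^2\,\dd t - 2\lambda|Z(t)|^2\,\dd t + \dd M(t)$ obtained from the one-sided condition, the use of Lemma~\ref{lemma} to make the stochastic-convolution contribution small for large $\lambda$, and an iteration over delay windows summed geometrically. Where you genuinely diverge is in how the rate $\gamma_0$ is produced. The paper introduces the weighted process $Y(t)=\e^{\alpha t}|Z(t)|^2$, runs a single variation-of-constants estimate on one window $[0,2r]$ (localised by a stopping time $\tau_\eps$ and closed by monotone convergence), and extracts the decay factor $\e^{-4\alpha r}$ from the weight: one first chooses $\alpha$ so large that $2\bar K\e^{-4\alpha r}\le \e^{-19 r\gamma_0}$ and then $\lambda=\Lambda(\alpha)$ so large that both the drift-forcing term $K\e^{\alpha r}/(2\lambda-\alpha)$ and the $\rho(\lambda)$ term are small; the window estimate is then simply iterated and summed, exactly as in your last step. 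Your version dispenses with the weight and instead makes the per-window gain itself small (through $\e^{-2\lambda r}$, $K/\lambda$ and $\rho(\lambda)$), feeding it into a two-step recursion $M_{j+1}\le\kappa(\lambda)(M_j+M_{j-1})$ whose spectral radius must beat $\e^{-8\gamma_0 r}$. Both work; the paper's weight decouples the target rate (carried by $\alpha$) from the damping (carried by $\lambda$) and keeps all bookkeeping inside one window of length $2r$, while your recursion is more elementary but requires starting the variation of constants one window back (so that the carried-over initial value enters with the small factor $\e^{-2\lambda r}$ rather than with a constant of order one) and an a priori finiteness or truncation argument before $M_{j+1}$ can be absorbed from the right-hand side.

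One inconsistency to repair in the write-up: in your middle paragraph you apply Lemma~\ref{lemma} with $p=8$ to the integrand $h=g(X_t)-g(\widetilde X_t)$ for a linear equation satisfied by $Z$ itself, with $f(X_t)-f(\widetilde X_t)$ treated as a remainder; as your own final caveat correctly observes, that remainder is not controlled by the hypotheses, so this decomposition cannot be run at the level of $Z$. The consistent version --- and what the paper does --- is to stay at the level of $|Z(t)|^2$, where the martingale part is $\dd M(t)=2\scal{Z(t),(g(X_t)-g(\widetilde X_t))\,\dd W(t)}$, and to apply Lemma~\ref{lemma} with $p=4$ to the convolution $\int_0^t\e^{-\lambda(t-s)}\,\dd M(s)$, whose integrand is bounded by $2\sqrt K\,|Z(s)|\,\|Z_s\|$ thanks to $\Norm{g(x)-g(y)}^2\le K\|x-y\|^2$; fourth moments of $|Z|^2$ then yield exactly the eighth moments of $Z$ required by the statement. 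With that adjustment, and with the stopping-time localisation you already flag, your argument closes and delivers the lemma.
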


\begin{proof}[of Theorem~\ref{thm:weakErgodicity}]
  We begin by fixing two initial conditions $\eta, \widetilde \eta \in
  \mathcal{C}$ of \eref{eq:modifiedSDE}.
We furthermore define the ``Girsanov shift'' $v$ by
\begin{equ}
v(t) =  \lambda g(\widetilde X_t)^{-1} (X(t) - \widetilde X(t))\;,
\end{equ}
where $\lambda>0$ is chosen as in Lemma \ref{lem:contraction} for
$\gamma_0=1$ (say) and we set $\tau = \inf\{ t \geq 0 : \int_0^t
|v(s)|^2\,\dd s \geq \eps^{-1} \|\eta - \tilde \eta\|^2\}$, where
$\eps>0$ is a small constant to be determined. Thanks to the
non-degeneracy assumption on $g$ and Lemma \ref{lem:contraction}, we
obtain $\lim_{\varepsilon \to 0} \P\{\tau=\infty\}=1$ and
$\lim_{t\to\infty} |X(t) - \tilde X(t)| = 0$ almost surely. In
particular, there exist some $\varepsilon>0$ independent of the initial conditions such that $\P\{\tau=\infty\}>0$.
We will fix such a value of $\eps$ from now on.
  
Setting $\widetilde W(t) = W(t) + \int_0^{t \wedge \tau} v(s) \,\dd
s$, we observe that the Cameron-Martin-Girsanov Theorem implies that
there exists a measure $\Q$ on $\Omega:=\CC([0,\infty),\R^m)$ so that
under $\Q$, $\widetilde W$ is a standard Wiener process on the time
interval $[0,\infty)$.
Let $\overline X$ be the solution of
  \begin{align*}
      \dd \overline X(t) &= f(\overline X_t)\,\dd t + g(\overline X_t)\,\dd \widetilde
      W(t)\qquad \overline X_0=\widetilde \eta\;.
  \end{align*}
  Since $\int_0^{\tau} v^2(s)\,ds \le \eps^{-1}\|\eta - \tilde\eta\|^2$ by construction, 
  the law of $\overline X$ is equivalent on $\CC([0,\infty),\R^d)$ 
  to the law of a solution to
  \eqref{e:sdde2} with initial condition $\widetilde \eta$. This means
  that the law of the pair $(X,\overline X)$ has marginals which are
  equivalent on $\CC([0,\infty),\R^d)$ to solutions to \eqref{e:sdde2} starting respectively
  from $\eta$ and $\widetilde \eta$. 
  Since $\overline X = \widetilde X$ on $\{\tau=\infty\}$, we have
$$
\lim_{t \to \infty} |\overline X(t) -  X(t)|=\lim_{t \to \infty} |\widetilde X(t) - X(t)|=0\; \mbox{ on }\; \{\tau=\infty\} \mbox{ a.s.}
$$
Therefore Corollary~\ref{cor:coupling} implies that the discrete-time chain $(X_{rn})_{n \in \N}$
has at most one invariant probability measure and hence the same is true for the Markov process $(X_t)_{t \ge 0}$.
Since $\CM(\Omega, \Omega)$ endowed with the topology of weak convergence is a Polish space 
\cite{Villani} and since all of the constants appearing in our explicit construction can be chosen
 independently of the initial conditions, the map $(x,y) \mapsto \Gamma_{x,y}$ is indeed measurable. 
\end{proof}

We now give the proof of Lemma~\ref{lemma} which was given at the
start of the section. 

\begin{proof}[of Lemma~\ref{lemma}] We begin by noticing that we need only prove the theorem for the
supremum over a deterministic time interval $[0,T]$. The version over
the random time interval follows by considering the function $\tilde
h(s) = h(s)\one_{[0, \tau\wedge T)}(s)$. Observe that $\tilde h(s)$
again almost surely has c\`adl\`ag paths and if $\tilde Y(t)$ is the
solution to \eqref{eq:lemma} with $h$ replaced by $\tilde h$ then
\begin{align*}
  \sup_{t \leq \tau \wedge T} |Y(s)|^p=   \sup_{t \leq T}
  |\tilde Y(s)|^p \qquad \text{and}\quad  \sup_{t \leq \tau \wedge T}
  |h(s)|^p=   \sup_{t \leq T}
  |\tilde h(s)|^p\,.
\end{align*}
The second identity is clear from the definition of $\tilde h$. The first
follows from the observation that $Y(s)=\tilde Y(s)$ for $s \leq \tau
\wedge T$ and, for $s > \tau \wedge T$, $|\tilde Y(s)|$ only decreases since
$\tilde h$ is identically zero.  Hence it is enough to prove the lemma
over a deterministic time interval. 

We begin by observing that the solution $Y$ can be represented in the
form
\begin{equation}\label{voc}
Y(t) = \e^{-\lambda t} \int_0^t \e^{\lambda s} h(s) \,\dd W(s)\;.
\end{equation}
Therefore, using Burkholder's inequality and abbreviating
$h^*:=\sup_{0 \le t \le T} |h(t)|$, we obtain
\begin{equs}
  \E|Y(t)|^p&=\e^{-\lambda t p}\, \E \Big| \int_0^t \e^{\lambda s} h(s) \,\dd W(s) \Big| ^p \\
  &\le C_p \e^{-\lambda t p}\, \E \Big| \int_0^t \e^{2\lambda s} h^2(s) \,\dd s \Big|^{p/2} \label{est}\\
  &\le C_p \E (h^*)^p (2\lambda)^{-p/2}.
\end{equs}
Let $N \in \N$ and define $t_k:=t_k(N):=kT/N$  for $k=0,...,N$ and
$$
I_k(t):=\int_{t_k}^{t} h(s) \,\dd W(s),\,t_k \le t \le t_{k+1},\,k=0,...,N-1.
$$
Notice that $I_k(t)$ is a local martingale with respect to the
filtration it generates.  Integrating (\ref{voc}) by parts, we get
$$
Y(t)=Y(t_k)\e^{-\lambda(t-t_k)} + I_k(t) - \lambda \int_{t_k}^t 
\e^{-\lambda(t-s)} I_k(s) \,\dd s,\qquad t_k \le t \le t_{k+1}.
$$
Hence,
\begin{eqnarray*}
  \sup_{0 \le t \le T} |Y(t)|^p &=& \max_{k=0,...,N-1} \sup_{t_k \le t \le t_{k+1}} |Y(t)|^p\\
  &\le& 2^{p-1}  \max_{k=0,...,N-1} \left( |Y(t_k)|^p + 2  \sup_{t_k \le t \le t_{k+1}} |I_k(t)|^p\right).
\end{eqnarray*}
Using Burkholder's inequality and (\ref{est}), we get
\begin{eqnarray*}
  \E \sup_{0 \le t \le T} |Y(t)|^p &\le& 2^{p-1} N C_p \E (h^*)^p (2 \lambda)^{-p/2} + 
  2^p N \max_{k=0,...,N-1} \E  \sup_{t_k \le t \le t_{k+1}} |I_k(t)|^p\\
  &\le& 2^{p-1} C_p \E (h^*)^p \left( N (2 \lambda)^{-p/2} + 2 T^{p/2} N^{1-\frac{p}{2}} \right).
\end{eqnarray*}
For each $\varepsilon>0$, we can choose $N$ large enough such that the
coefficient of $\E (h^*)^p$ becomes smaller than $\varepsilon$ for all
sufficiently large $\lambda$.
\end{proof}

We conclude with the proof of Lemma~\ref{lem:contraction}:

\begin{proof}[of Lemma~\ref{lem:contraction}]
 First observe that the pair of equations
 \eqref{eq:modifiedSDE} admits a unique global solution (see
 \cite{RS08}).  Setting $Z(t)= X(t)-\widetilde X(t)$, we see that
 \begin{align*}
   \dd |Z(t)|^2 &= 2\scal{f(X_t)- f(\widetilde X_t),Z(t)}\,\dd t +
   \Norm{g(X_t)-g(\widetilde X_t)}^2 \,\dd t - 2\lambda  |Z(t)|^2 \,\dd t + \dd M(t)\\
   & \leq K \|Z_t\|^2\,\dd t - 2\lambda |Z(t)|^2 \,\dd t + \dd M(t)
 \end{align*}
 where $M(0)=0$ and $\dd M(t) = 2\scal{Z(t), (g(X_t) - g(\widetilde
   X_t))\,\dd W(t)}$. Define now $Y(t)= \e^{\alpha t}|Z(t)|^2$ for a constant $\alpha$ to de determined later. 
    Then
  \begin{align*}
     \dd Y(t) &= \alpha Y(t)\,\dd t + \e^{\alpha t}\,\dd |Z(t)|^2\\
          &\leq (\alpha-2\lambda) Y(t)\,\dd t+  K \e^{\alpha t}\|Z_t\|^2\,\dd t  +  \e^{\alpha t}\,\dd M(t)\\
          &\leq  (\alpha-2\lambda) Y(t)\,\dd t+  K \e^{\alpha r}\|Y_t\|\,\dd t  +  \e^{\alpha t}\,\dd M(t)\;.
 \end{align*}
 Setting   $N(t)=  \int_0^t
  \e^{-\lambda (t-s)} \e^{\alpha s}\,\dd M(s)$ and $\kappa = 2\lambda - \alpha$,  the  variation of constants formula thus yields
 \begin{equs}
  Y(t) &\leq \e^{-\kappa t}Y(0) + K e^{\alpha r}\int_0^t
 \e^{-\kappa (t-s)}\|Y_s\|\,\dd s + N(t)\\
 &\le  \e^{-\kappa t}Y(0) + {K e^{\alpha r}\over \kappa} \sup_{s \in [0,t]} \|Y_s\| + N(t)\;.
    \end{equs}
For $\eps > 0$, let now $\tau_\eps$ be the stopping time defined by $\tau_\eps = 2r \wedge \inf\{t \ge 0\,:\, \|Y_t\| \ge \eps^{-1}\}$.
It follows that there exists a constant $\bar K$  independent 
of $\alpha$, $\lambda$ and $\eps$ such that
\begin{equs}
 \E\sup_{s \in [0,\tau_\eps]} \|Y_s\|^4 &\leq \bar K \|Y_0\|^4 + \frac{\bar K e^{4\alpha r}}{\kappa^4}\E \sup_{s \in [0,\tau_\eps]} \|Y_s\|^4  + \bar K\E \sup_{s \in[0,\tau_\eps]} |N(s)|^4\;.
\end{equs}
Now observe that by Lemma~\ref{lemma}, we have for $N$ the bound
\begin{equ}
   \E \sup_{s \in[0,\tau_\eps]} |N(s)|^4 \leq 
\rho(\lambda)\,2^4 \E \sup_{s \in [0,\tau_\eps]}\big(\e^{4\alpha
 s}|Z(s)|^4\Norm{g(X_s)-g(\widetilde X_s)}^4\big) \leq  C e^{2\alpha r} \rho(\lambda) \E \sup_{s \in [0,\tau_\eps]}  \|Y_s\|^4 \;,
\end{equ}
for a constant $C$  independent of $\alpha$ and $\lambda$.
This shows that we can find a function $\alpha \mapsto \Lambda(\alpha)$ such that  
both $\bar K e^{4\alpha r}/(\Lambda(\alpha)-\alpha)^4\le {1\over 4}$ and 
$\bar K C e^{\alpha r}\rho(\Lambda(\alpha)) \le {1\over 4}$, thus obtaining
\begin{equ}
 \E\sup_{s \in [0,\tau_\eps]} \|Y_s\|^4 \leq \bar K \|Y_0\|^4 + {1\over 2}  \E\sup_{s \in [0,\tau_\eps]} \|Y_s\|^4\;,
\end{equ}
provided that we choose $\lambda = \Lambda(\alpha)$.
Since this bound is independent of $\eps> 0$, we can take the limit $\eps \to 0$, so that the monotone convergence theorem yields $ \E\sup_{s \in [0,2r]} \|Y_s\|^4 \le 2 \bar K \|Y_0\|^4$. In terms of our original process $Z$, we conclude that 
\begin{align}\label{eq:boundZ}
 \E \|Z_r\|^8 \leq  2 \bar K \|Z_0\|^8\qquad\text{and}\qquad    \E \|Z_{2r}\|^8 \leq  2 \bar K e^{-4\alpha r} \|Z_0\|^8\,.
\end{align}
Since $\bar K$ is independent of $\alpha$, we can ensure that $2 \bar K e^{-4\alpha r} \leq e^{-19r \gamma_0}$
by taking $\alpha$ (and therefore also $\lambda$) sufficiently large.
Iterating \eref{eq:boundZ}, we obtain
\begin{equ}[e:iter]
  \E \|Z_{2nr}\|^8 \leq   e^{-18r \gamma_0 n }\|Z_0\|^8 \qquad\text{and}\qquad  \E \|Z_{(2n+1)r}\|^8 \leq  2 \bar K  e^{-18r \gamma_0 n } \|Z_0\|^8\,.
\end{equ}
Note now that if $t \in [nr,(n+1)r]$, then $\|Z_t\| \leq \|Z_{nr}\| + \|Z_{(n+1)r}\|$. Therefore, there exists a constant $C>0$ such that
\begin{align*}
  \sup_{t \geq 0} e^{8\gamma_0 t} \|Z_t\|^8 \leq  C\sum_{n=0}^\infty e^{8\gamma_0 rn} \|Z_{rn}\|^8\,.
\end{align*}
Hence using \eqref{e:iter}, we have for a different constant $C>0$
\begin{align*}
\E  \sup_{t \geq 0} e^{8\gamma_0 t} \|Z_t\|^8 \leq  C  \|Z_{0}\|^8  \sum_{n=0}^\infty e^{-\gamma_0 rn}\,.
\end{align*}
Since the sum on the right hand side converges, the proof is complete. 
\end{proof}

This shows the uniqueness of the invariant measure for a large class
of stochastic delay equations.  It turns out that under exactly the
same conditions, we can ensure that the invariant measure is not only
unique, but that transition probabilities converge to it.

\subsection{Convergence of transition probabilities of SDDEs}

In this section we will apply the abstract results of Section
\ref{subs:convergence} to the $\CC$-valued Markov process $(X_t)$ which
we introduced in the previous section. We will denote its transition
probabilities by $\cP_t(\eta,.)$. We will prove the following result:

\begin{theorem}\label{thm:convSDDE}
  Let the assumptions of Theorem~\ref{thm:weakErgodicity} be
  satisfied. If the Markov process $X_t$, $t\ge 0$ admits an invariant
  probability measure $\mu$, then for each $\eta \in \C$ we have
  $\cP_t(\eta,\cdot\,) \to \mu$ weakly.
\end{theorem}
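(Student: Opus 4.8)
\textbf{Proof proposal for Theorem~\ref{thm:convSDDE}.}
The plan is to verify the hypotheses of Theorem~\ref{thm:conv} with $\mu_\star = \mu$ and then read off the conclusion. Since $\mu$ is an invariant measure for $(X_t)$, by the ergodic decomposition we may assume $\mu$ is ergodic (each ergodic component is again invariant, and weak convergence $\cP_t(\eta,\cdot\,)\to\mu$ for every $\eta$ need only be checked after decomposing; alternatively one argues that Theorem~\ref{thm:weakErgodicity} already forces $\mu$ to be the unique invariant measure, hence ergodic). As in the proof of Theorem~\ref{thm:weakErgodicity}, we subsample at integer multiples of $r$ and work with the discrete-time chain $\cP := \cP_r$ on $\C$.

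First I would construct the coupling map $(y,y')\mapsto \Gamma_{y,y'}$. This is exactly the asymptotic-coupling construction from the proof of Theorem~\ref{thm:weakErgodicity}: given initial data $(\eta,\widetilde\eta)$, run the system \eref{eq:modifiedSDE} with the synchronizing drift, perform the Girsanov shift $v(t)=\lambda g(\widetilde X_t)^{-1}(X(t)-\widetilde X(t))$ stopped at $\tau$, and let $\Gamma_{\eta,\widetilde\eta}$ be the law of the resulting pair $(X_{rn},\overline X_{rn})_{n\ge0}$. By Lemma~\ref{lem:contraction} and the uniform bound on $\|g^{-1}\|$, the quantity $\P\{\tau=\infty\}$ is bounded below by a constant depending only on $\|\eta-\widetilde\eta\|/\sqrt\eps$, and on $\{\tau=\infty\}$ one has $|X(t)-\overline X(t)|\to0$, i.e.\ $\Gamma_{\eta,\widetilde\eta}(\CD)=1$ there. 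The key point to extract is the \emph{quantitative} statement: for every $\eps>0$ and every $x\in\C$ there is a neighbourhood $U$ of $x$ with $\inf_{y,y'\in U}\Gamma_{y,y'}(\CD)\ge 1-\eps$. This follows because $\Gamma_{y,y'}(\CD)\ge\P\{\tau=\infty\}$ and, by the choice of $\tau$ together with Lemma~\ref{lem:contraction} (which gives $\E(\sup_{t\ge0}e^{\gamma_0 t}\|Z_t\|)^8\le (C\|Z_0\|)^8$), one has $\E\int_0^\infty|v(s)|^2\,\dd s \le C'\|y-y'\|^2$ for a constant $C'$ independent of the initial data; hence $\P\{\tau<\infty\}\le \eps C' \|y-y'\|^2/\|y-y'\|^2$... more carefully, choosing the threshold in the definition of $\tau$ proportional to $\|y-y'\|^{-1}$ rather than $\|y-y'\|^2\eps^{-1}$ makes $\P\{\tau<\infty\}\to0$ as $y'\to y$ uniformly on small balls. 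Measurability of $(y,y')\mapsto\Gamma_{y,y'}$ is exactly as argued at the end of the proof of Theorem~\ref{thm:weakErgodicity}, using that $\CM(\Omega,\Omega)$ with the weak topology is Polish.

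Next I would verify the recurrence hypothesis: there exist $B\subset\C$ with $\mu(B)>\frac12$ and $x_0\in\C$ such that for every neighbourhood $U$ of $x_0$ there is $k$ with $\inf_{y\in B}\cP^k(y,U)>0$. Here I would take $x_0=0\in\C$ (the constant zero path) and use the structure of \eref{e:sdde2}: the growth condition $2\scal{f(x)-f(y),x(0)-y(0)}^+ +\Norm{g(x)-g(y)}^2\le K\|x-y\|^2$ applied with $y=0$, together with boundedness of $f$ on bounded sets, gives an a priori moment bound making the chain bounded in probability, so one can choose a bounded $B$ with $\mu(B)>\frac12$; then a support/controllability argument (the Wiener shift that drives $X$ close to $0$ over a time window of length $kr$ has positive probability by Girsanov, using the right-invertibility of $g$ to realize the required control) yields $\inf_{y\in B}\cP^k(y,U)>0$ for the chosen neighbourhood $U$ of $0$. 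With all hypotheses of Theorem~\ref{thm:conv} checked, we conclude $\cP^n(\eta,\cdot\,)\to\mu$ weakly for every $\eta\in\supp\mu$; and since (by the recurrence just established, or directly from the controllability to a neighbourhood of $0$) the chain is irreducible, $\supp\mu=\C$, giving convergence for \emph{every} $\eta\in\C$. Passing back from the subsampled chain to the continuous-time semigroup is routine: for general $t$ write $t=nr+s$ and use the Feller property together with the already-established convergence along multiples of $r$.

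\textbf{Main obstacle.} The delicate point is the quantitative coupling estimate $\inf_{y,y'\in U}\Gamma_{y,y'}(\CD)\ge1-\eps$ with $U$ shrinking to a point: one must show that the Girsanov cost $\int_0^\tau|v(s)|^2\,\dd s$ can be kept below a threshold \emph{that still goes to zero} as $y'\to y$, so that $\P\{\tau=\infty\}\to1$; this requires balancing the contraction rate $\lambda$ in Lemma~\ref{lem:contraction} against the exponential weight so that the expected total cost is $O(\|y-y'\|^2)$. The recurrence/irreducibility step is the second place needing care, since $\C$ is infinite-dimensional and one cannot invoke standard small-set machinery—one must instead produce the positive lower bound on $\cP^k(y,U)$ by an explicit control argument exploiting $\sup_\eta\|g^{-1}(\eta)\|<\infty$.
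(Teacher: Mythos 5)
The central step of your proposal does not meet the hypotheses of Theorem~\ref{thm:conv}. That theorem requires $\Gamma_{y,y'} \in \cC\bigl(\cP_{[\infty]}\delta_y,\cP_{[\infty]}\delta_{y'}\bigr)$, i.e.\ a coupling with \emph{exact} marginals, and its proof genuinely uses this: the concatenated coupling $\tilde\Gamma_{y,z}$ built there must have marginals $\cP_{[\infty]}\mu_\star$ and $\cP_{[\infty]}\delta_z$ in order for the bound $\P(d(Y_n,Z_n)\le\eps)\ge 1-4\eps$ to say anything about $\CP^n(z,\cdot\,)$. The pair $(X,\overline X)$ you import from the proof of Theorem~\ref{thm:weakErgodicity} is not such a coupling: under the original Wiener measure the Girsanov-shifted solution $\overline X$ has a law which is only \emph{equivalent} to $\cP_{[\infty]}\delta_{y'}$ (the density is nontrivial even on $\{\tau=\infty\}$), so your $\Gamma_{y,y'}$ lies in $\widetilde\cC$ but not in $\cC$. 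This is precisely the distinction the paper is organised around: generalized couplings suffice for the uniqueness statement (Theorem~\ref{thm:mainSimpleErgodic}), but not for the convergence statement, and no tuning of the threshold in $\tau$ can repair the marginals. The paper closes this gap in Section~\ref{sec:applgap}: the noise realisations $w$ and $\Psi(w)$ are recoupled through $\Pi_0(\dd w_1,\dd w_2)=(1\wedge\tilde\CD(w_2))\bigl(\tilde\Psi\push\P\bigr)(\dd w_1,\dd w_2)$, completed by an independent product of the leftover masses, which is an honest coupling of $\P$ with itself; the error events $\Omega_2,\Omega_3$ are estimated via Lemma~\ref{lem:contraction}, Proposition~\ref{prop:apriori} and the $L^2$ bound on $1-\tilde\CD$, showing that $d(x,y)=1\wedge\delta^{-1}\|x-y\|$ is contracting for $\cP_t$; and Proposition~\ref{prop:contr}, together with the measurable-selection Lemma~\ref{lem:measure}, converts this contraction (via the supermartingale $\tilde\alpha^{-n}d(X_n,Y_n)$) into exactly the second hypothesis of Theorem~\ref{thm:conv}, including the measurability you would otherwise have to argue separately. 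Your proposal is missing this correction-of-marginals idea, which is the main content of the paper's argument.

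Two secondary points. For the recurrence hypothesis, the paper does not argue by Girsanov controllability from scratch but invokes Lemma~\ref{lem:support}, which delivers the bound $\inf_{\|\eta\|\le R}\cP_{t}(\eta,\mathrm{B}_\delta)>0$ \emph{uniformly} over the ball via a martingale argument; your sketch gives positivity for each fixed $y$, and both the uniformity over $y\in B$ and the full-support claim ($\supp\mu=\C$) need an argument of that type -- reaching neighbourhoods of $0$ alone does not yield irreducibility or full support. Finally, passing from the subsampled chain to continuous time is not just the Feller property: one needs uniformity over the remainder $s\in[0,r)$, which the paper obtains from Proposition~\ref{prop:apriori} in the form $d(\cP_s\nu,\cP_s\tilde\nu)\le C\,d(\nu,\tilde\nu)$ for $s$ in a compact time interval. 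These last points are reparable; the marginal issue above is the genuine gap.
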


We start with the following lemma (which we will also need in Section \ref{sec:applgap}):

\begin{lemma}\label{lem:support}
  Let the assumptions of Theorem~\ref{thm:weakErgodicity} be satisfied
  and denote by $\mathrm{B}_R$ the closed ball in $\C$ with radius $R$
  and center 0. Then for each $R,\delta >0$ and each $t_\star \ge 2r$,
$$
\inf_{\|\eta\| \le R} \cP_{t_\star} \big(\eta,\mathrm{B}_{\delta} \big) > 0.
$$
\end{lemma}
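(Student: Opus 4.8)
The plan is to show that for each fixed $\eta$ with $\|\eta\|\le R$ one has $\cP_{t_\star}(\eta,\mathrm B_\delta)>0$, and then to upgrade this pointwise positivity to a uniform-in-$\eta$ lower bound by a compactness argument. For the pointwise statement, I would use a support-type theorem (Stroock--Varadhan style) for the SDDE \eqref{e:sdde2}: since $g$ admits a uniformly bounded right inverse $g^{-1}$, one can steer the solution close to any target path by choosing an appropriate control in place of the Wiener increment, and the Girsanov theorem (exactly as in the proof of Theorem~\ref{thm:weakErgodicity}) then shows that the law of the controlled solution on $[0,t_\star]$ is equivalent to $\cP_{t_\star}(\eta,\cdot\,)$. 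Concretely, to hit $\mathrm B_\delta$ at time $t_\star$ it suffices to drive $X$ towards $0$: set $u(t) = -\lambda g^{-1}(X_t)X(t)$ (or any control realizing strong contraction of $|X(t)|^2$), which by the same Lyapunov computation as in Lemma~\ref{lem:contraction} (now comparing $X$ to the zero path rather than to $\widetilde X$, using $\|f(x)-f(0)\|$-type bounds from the dissipativity hypothesis on $f,g$) forces $\|X_{t_\star}\|$ to be small with positive probability when $t_\star\ge 2r$; the Girsanov density is integrable because $\int_0^{t_\star}|u(s)|^2\,ds$ is finite. Hence $\cP_{t_\star}(\eta,\mathrm B_\delta)>0$ for each individual $\eta$.

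The uniform bound is then obtained as follows. First observe that the map $\eta \mapsto \cP_{t_\star}(\eta,\mathrm B_{\delta})$ is not necessarily continuous (the semigroup need not be strong Feller), so I cannot directly invoke continuity plus compactness of $\mathrm B_R$. Instead I would argue through a two-step procedure: by the Feller property, $\eta \mapsto \cP_{s}(\eta,\cdot\,)$ is weakly continuous for $s=r$ (say); and for $s=t_\star - r \ge r$, lower semicontinuity of $\eta\mapsto \cP_{t_\star-r}(\eta, \mathcal O)$ for open $\mathcal O$ together with the Chapman--Kolmogorov identity
\begin{equ}
\cP_{t_\star}(\eta,\mathrm B_\delta) = \int_{\C} \cP_{t_\star - r}(\zeta, \mathrm B_\delta)\, \cP_{r}(\eta, d\zeta)
\end{equ}
reduces matters to a lower bound on $\cP_{t_\star-r}(\zeta, \mathrm B_\delta)$ that is uniform for $\zeta$ in a compact set containing an appreciable fraction of the mass of $\cP_r(\eta,\cdot\,)$. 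One then uses a moment/Lyapunov bound for the solution (available from the dissipativity assumption, as in Remark~\ref{rem:global} and the a priori estimates behind Lemma~\ref{lem:contraction}) to choose $R'$ so that $\cP_r(\eta,\mathrm B_{R'}) \ge \tfrac12$ uniformly for $\|\eta\|\le R$, and finally shows $\inf_{\|\zeta\|\le R'}\cP_{t_\star-r}(\zeta,\mathrm B_\delta)>0$.

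For this last infimum one can use a compactness argument after all, but at the level of the \emph{controlled} dynamics rather than the transition kernel: the constant $\lambda$, the control horizon, the contraction rate, and the Girsanov mass bound in the pointwise argument above can all be chosen uniformly for $\|\zeta\|\le R'$ (this is exactly the kind of uniformity already exploited at the end of the proof of Theorem~\ref{thm:weakErgodicity}), so the resulting lower bound on $\cP_{t_\star-r}(\zeta,\mathrm B_\delta)$ is itself uniform over $\|\zeta\|\le R'$. I expect the main obstacle to be precisely this uniformity: one must verify that the control steering $\zeta$ to a small ball, and the associated Radon--Nikodym density estimate, degrade only in a way controlled by $\|\zeta\|$ and $R'$, so that no continuity of the (non-strong-Feller) semigroup is needed. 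The Lyapunov structure forced by the hypothesis $2\scal{f(x)-f(y),x(0)-y(0)}^+ + \Norm{g(x)-g(y)}^2 \le K\|x-y\|^2$ (applied with $y=0$, together with boundedness of $f$ on bounded sets) is what makes these estimates locally uniform, and the requirement $t_\star\ge 2r$ enters because one needs at least one full delay interval to flush the history segment towards $0$ and a second to stabilize it.
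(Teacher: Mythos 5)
Your plan is viable, but it is genuinely different from the paper's argument, and it is the heavier of the two. The paper does not use Girsanov and does not use the invertibility of $g$ at all: it fixes a deterministic $C^1$ path $h=h_y$ with $h(0)=y=\eta(0)-(\delta/2,0,\dots,0)^T$, Lipschitz constant at most $2R/r$ and $h\equiv 0$ on $[r,t_\star]$, and considers the semimartingale $D(t)=|X(t)-h(t)|^2-(\delta/2)^2$, which starts at $0$ and, for as long as $|D|\le(\delta/4)^2$, has drift and diffusion characteristics bounded by constants depending only on $R,\delta,r$ and the local bounds on $f,g$. A tube-type small-ball estimate for such semimartingales (Lemma I.8.3 of \cite{Bass}, following the ``chasing balls'' argument of \cite{SS02}) then gives $\P\bigl(\sup_{t\le t_\star}|D(t)|\le(\delta/4)^2\bigr)\ge p>0$ with $p$ depending only on those bounds, hence uniformly in $\|\eta\|\le R$; on this event $\|X_{t_\star}\|\le\delta$ because $h\equiv0$ on $[r,t_\star]$ and $t_\star\ge 2r$. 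So the uniformity you worry about comes for free, and the lemma holds even for degenerate $g$. Your control-plus-Girsanov route instead leans on the standing assumption $\sup_\eta\|g^{-1}(\eta)\|<\infty$, which is legitimate here but strictly stronger than what the statement needs.

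Two points in your write-up need repair if you pursue your route. First, the Chapman--Kolmogorov two-step reduction buys nothing: $\mathrm{B}_{R'}$ is a bounded but not compact subset of $\CC$ (and $\mathrm{B}_R$ was never compact either), so the final infimum $\inf_{\|\zeta\|\le R'}\cP_{t_\star-r}(\zeta,\mathrm{B}_\delta)>0$ is a statement of exactly the same type as the lemma itself; you should simply run the uniform controlled/Girsanov estimate directly on $\mathrm{B}_R$ over $[0,t_\star]$, which is what your last paragraph in effect does. Second, ``the Girsanov density is integrable because $\int_0^{t_\star}|u|^2\,\dd s<\infty$'' is not a proof, and in any case qualitative equivalence of laws only gives pointwise positivity, not the uniform lower bound. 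You need a quantitative transfer: truncate the shift at a fixed energy level $N$ (as in the proof of Theorem~\ref{thm:weakErgodicity}), chosen using the uniformly bounded second moments of the controlled solution so that the energy stays below $N$ with probability, say, at least $3/4$ for all $\|\eta\|\le R$, and then bound the original probability of the good event from below by something like $\P^c(A)^2e^{-N}$ via Cauchy--Schwarz on the truncated density. Relatedly, comparing with the zero path is not quite Lemma~\ref{lem:contraction}: the martingale term $2\scal{X(t),g(X_t)\,\dd W(t)}$ does not vanish as $X\to0$, so there is no pathwise decay; one must choose $\lambda$ large depending on $\delta$ and $R$ and use Lemma~\ref{lemma} (with a localization stopping time) to make the stochastic convolution small with high probability. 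All of this can be carried out uniformly over $\mathrm{B}_R$, but it is precisely the technical overhead that the paper's semimartingale argument avoids.
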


\begin{proof}
  The proof resembles that of Lemma 2.4 in \cite{SS02}.  Fix $R \ge
  \delta >0$. For each $y \in {\bf R}^d$, $|y| \le 3R/2$, let
  $h=h_y:[0,t_\star] \to {\bf R}^d$ be continuously differentiable
  with Lipschitz constant at most $2R/r$ and satisfy $h=0$ on
  $[r,t_\star]$, $h(0)=y$. Define
$$
D(t):= |X(t)-h(t)|^2 - (\delta/2)^2,
$$ 
where $X$ solves SDDE \eqref{e:sdde2} with initial condition $\eta \in
\C, \|\eta\| \le R$, $y:=\eta(0)-(\delta/2,0,...,0)^T$ and $h=h_y$ is
defined as above.  Then
$$
\dd D(t) = 2\scal{X(t)-h(t),f(X_t) - h'(t)} \,\dd t +
2\scal{X(t)-h(t),g(X_t)\,\dd W(t)} + \sum_{i,j} g_{ij}^2(X_t) \,\dd t,
$$
while $D(0)=0$. Let $\tau:=\inf\{t \ge 0: |D(t)| >
(\delta/4)^2\}$. Let now $W_1$ be a Wiener process that is independent
of $W$ and set
$$
Y(t):= D(t \wedge \tau) + (W_1(t)-W_1(\tau)) {\bf 1}_{t \ge \tau}\;.
$$
This is a semimartingale with $Y(0)=0$ which fulfills the conditions
of Lemma I.8.3 of \cite{Bass} (with $(\delta/4)^2$ in place of
$\eps$). Therefore, there exists $p>0$ such that for all $\|\eta\| \le
R$ we have
$$
\cP_{t_\star} \big(\eta,\mathrm{B}_{\delta} \big) \geq 
 \P \big(\sup_{0 \le t \le t_\star}  |Y(t)| \le (\delta/4)^2|X_0=\eta\big) \ge p\;,
$$
thus concluding the proof.
\end{proof}

\begin{proof}[of Theorem~\ref{thm:convSDDE}]
  Theorem~\ref{thm:conv} is formulated for discrete time, so we first
  show that the two conditions are satisfied for the Markov kernel
  $\cP_{t}$ for some $t>0$. The previous lemma immediately implies
  that the first condition of Theorem~\ref{thm:conv} is satisfied for
  any $t>0$ and any sufficiently large $k$. The second condition
  follows from the fact that there exists (a small value) $\delta > 0$
  such that the metric $d(x,y):=1 \wedge \delta^{-1} \|x-y\|$ on $\C$
  is contracting for $\cP_{t}$ (see Definition~\ref{def:contr}) for
  any sufficiently large $t>0$ (which is proved in
  Section~\ref{sec:contrSDDE}) and Proposition~\ref{prop:contr}.
  Since the support of $\mu$ equals $\C$, it therefore follows from
  Theorem~\ref{thm:conv} that for some suitable $t$, all transition
  probabilities of the chain associated to $\cP_{t}$ converge to $\mu$
  weakly. To show that even all transition probabilities of the
  continuous-time Markov process $(X_t)$ converge to $\mu$ weakly, it
  suffices to observe that (by Proposition \ref{prop:apriori}) there
  exists a constant $C$ such that $d(\cP_\tau \nu,\cP_\tau \tilde\nu)
  \leq C \,d(\nu,\tilde\nu)$ for all $\tau \in [0,t]$ and all
  $\nu,\tilde \nu$.
\end{proof}

\begin{remark}
It is not true in general that one has exponential convergence under the 
assumptions of Theorem~\ref{thm:weakErgodicity} (plus the existence of an invariant measure)
alone. Consider for example the one-dimensional SDE
\begin{equ}
dx = - {x \over (1+x^2)^\alpha}\,dt + dW(t)\;,
\end{equ}
then it is known that for $\alpha \in ({1\over 2},1)$ it has a unique invariant measure, but that convergence
of transition probabilities is only stretched exponential \cite{NonExist}. However, it does satisfy the one-sided
Lipschitz condition of Theorem~\ref{thm:weakErgodicity}.
\end{remark}

\section{A weak form of Harris' theorem}
\label{sec:Harris}
In this section, we show that under very mild additional assumptions
on the dynamic of \eref{e:sdde2}, the uniqueness result for an
invariant measure obtained in the previous section can be strengthened
to an exponential convergence result in a type of weighted Wasserstein
distance. Our main ingredient will be the existence of a
\textit{Lyapunov} function for our system. Recall that a Lyapunov
function for a Markov semigroup $\{\CP_t\}_{t\ge 0}$ over a Polish
space $\X$ is a function $V\colon \X \to [0,\infty]$ such that $V$ is
integrable with respect to $\CP_t(x,\cdot\,)$ for every $x\in \X$ and
$t\ge 0$ and such that there exist constants $\CV, \gamma, \KV>0$ such
that the bound
\begin{equ}[e:Lyapunov]
\int V(y)\,\CP_t(x,\dd y) \le \CV \e^{-\gamma t} V(x) + \KV\;,
\end{equ}
holds for every $x \in \X$ and $t \ge 0$. In the usual theory of
stability for Markov processes, the notion of a ``small set'' plays an
equally important role. We say that a set $A \subset \X$ is
\textit{small} if there exists a time $t>0$ and a constant $\eps > 0$
such that
\begin{equ}[e:SGcond]
\|\CP_t(x,\cdot\,)-\CP_t(y,\cdot\,)\|_\TV \le 1 - \eps\;,
\end{equ}
for every $x,y \in A$. Recall that the total variation distance
between two probability measures is equal to $1$ if and only if the
two measures are mutually singular. A set is therefore small if the
transition probabilities starting from any two points in the set have
a ``common part'' of mass at least $\eps$. The classical Harris
theorem \cite{MT,Harris} then states that:

\begin{theorem}[Harris]
  Let $\CP_t$ be a Markov semigroup over a Polish space $\X$ such that
  there exists a Lyapunov function $V$ with the additional property
  that the level sets $\{x\,:\, V(x) \le C\}$ are small for every
  $C>0$. Then, $\CP_t$ has a unique invariant measure $\mu_\star$ and
  $\|\CP_t(x,\cdot\,) - \mu_\star\|_\TV \le C \e^{-\gamma_\star t} (1+
  V(x))$ for some positive constants $C$ and $\gamma_\star$.
\end{theorem}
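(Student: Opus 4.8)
The plan is to run the standard coupling-and-contraction argument (see \cite{Harris}), which is the prototype that the weak version proved in this section generalises; in particular the \emph{unweighted} form of the conclusion can also be deduced from Theorem~\ref{thm:main} by taking $d(x,y)=\one_{x\neq y}$ (for which $d^2=d$ is vacuously contracting and $d$-smallness is exactly smallness), so the real content is the $V$-weighted bound. I would first reduce to a single Markov operator: fix $t_\star$ large enough that $\alpha:=\CV\e^{-\gamma t_\star}<1$ and set $\CP:=\CP_{t_\star}$, so that \eref{e:Lyapunov} reads $\CP V\le\alpha V+\KV$. A useful preliminary remark is that smallness of a level set propagates forward in time: if $\CP_{t_0}(x,\cdot\,)\ge\eps\,\nu$ for all $x$ with $V(x)\le R$ and a fixed probability measure $\nu$, then $\CP_t(x,\cdot\,)\ge\eps\,\CP_{t-t_0}\nu$ for all $t\ge t_0$; hence we may assume $\{V\le R\}$ is small for $\CP$ itself, i.e.\ $\CP(x,\cdot\,)\wedge\CP(y,\cdot\,)$ has mass $\ge\eps$ whenever $V(x),V(y)\le R$. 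Convergence for the full semigroup will be recovered at the end by interpolating over $s\in[0,t_\star]$ via the crude bound $\CP_s V\le\CV V+\KV$.

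The heart of the proof is to equip $\X$ with the distance $d_\beta(x,y)=\one_{x\neq y}\bigl(2+\beta V(x)+\beta V(y)\bigr)$ (a genuine metric for every $\beta>0$) and to show that for a suitable choice of the parameters $\CP$ is a strict contraction, with ratio $\bar\alpha<1$, for the associated Wasserstein-$1$ distance $\mathcal{W}_\beta$. To this end I would estimate, for each pair $x\neq y$, the quantity $\E\bigl[\one_{X\neq Y}\bigl(2+\beta V(X)+\beta V(Y)\bigr)\bigr]$ over a well-chosen coupling $(X,Y)$ of $\CP(x,\cdot\,)$ and $\CP(y,\cdot\,)$, splitting into two regimes. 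If $V(x),V(y)\le R$, use the coupling that sets $X=Y$ on the common mass $\CP(x,\cdot\,)\wedge\CP(y,\cdot\,)$ (probability $\ge\eps$) and lets the residual masses evolve independently otherwise; then $\one_{X\neq Y}$ removes the ``$2$''-term with probability $\ge\eps$ while the Lyapunov bound controls the $V$-terms, giving the bound $2(1-\eps)+\beta\alpha\bigl(V(x)+V(y)\bigr)+2\beta\KV$. If instead $\max\{V(x),V(y)\}>R$, use any coupling together with the Lyapunov bound to get $2+\beta\alpha\bigl(V(x)+V(y)\bigr)+2\beta\KV$, and since $V(x)+V(y)>R$ an elementary computation shows this is $\le\bar\alpha\bigl(2+\beta V(x)+\beta V(y)\bigr)$ once $R$ is large --- the relevant threshold being $R>2\KV/(1-\alpha)$. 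Comparing the first bound with the target $\bar\alpha\bigl(2+\beta V(x)+\beta V(y)\bigr)\ge2\bar\alpha$ dictates the order of the choices: \emph{first} fix $R>2\KV/(1-\alpha)$ (whence the level set is small with some $\eps=\eps(R)>0$), \emph{then} choose $\beta$ small relative to $\eps/(\alpha R+\KV)$, and \emph{finally} $\bar\alpha<1$ close enough to $1$ to absorb both regimes.

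Granting the contraction $\mathcal{W}_\beta(\CP\mu,\CP\nu)\le\bar\alpha\,\mathcal{W}_\beta(\mu,\nu)$, the conclusion follows by soft arguments. Fixing any $x_0$ with $V(x_0)<\infty$, the iterates $\CP^n\delta_{x_0}$ satisfy $\mathcal{W}_\beta(\CP^n\delta_{x_0},\CP^{n+1}\delta_{x_0})\le\bar\alpha^n\,\mathcal{W}_\beta(\delta_{x_0},\CP\delta_{x_0})$ with the latter finite, hence form a $\mathcal{W}_\beta$-Cauchy sequence; since $\mathcal{W}_\beta\ge2\|\cdot\|_{\TV}$ they converge in total variation to some $\mu_\star$, which has finite $V$-integral (monotone convergence applied to $V\wedge N$ together with the uniform bound $\CP^n V(x_0)\le V(x_0)+\KV/(1-\alpha)$) and is $\CP$-invariant since $\CP$ is $\TV$-nonexpansive. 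Any invariant probability measure has finite $V$-moment (a standard consequence of \eref{e:Lyapunov}, see \cite{MT}), so the contraction forces it to equal $\mu_\star$; and since $\CP_s\mu_\star$ is $\CP$-invariant with finite $V$-moment for every $s$, $\mu_\star$ is in fact the unique invariant measure of the whole semigroup. Finally, writing $t=nt_\star+s$ with $s\in[0,t_\star)$, one has $\|\CP_t(x,\cdot\,)-\mu_\star\|_{\TV}\le\hf\,\mathcal{W}_\beta(\CP^n\CP_s\delta_x,\CP^n\mu_\star)\le\hf\,\bar\alpha^n\,\mathcal{W}_\beta(\CP_s\delta_x,\mu_\star)$; bounding the last Wasserstein distance by the trivial coupling together with $\CP_s V\le\CV V+\KV$ and $\int V\,\dd\mu_\star\le\KV/(1-\alpha)$ gives $\mathcal{W}_\beta(\CP_s\delta_x,\mu_\star)\le C'(1+V(x))$ uniformly in $s\in[0,t_\star]$, and $\bar\alpha^n\le\bar\alpha^{-1}\e^{-\gamma_\star t}$ with $\gamma_\star:=-t_\star^{-1}\log\bar\alpha>0$, whence the claimed estimate.

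\textbf{Main obstacle.} The genuinely delicate point is the two-regime contraction estimate, and specifically the verification that the parameters can be chosen \emph{consistently}: $R$ must be large both to contract the ``far'' regime and to make the effective drift $\alpha+2\KV/R$ strictly less than $1$, yet the smallness constant $\eps(R)$ of the level set $\{V\le R\}$ may deteriorate as $R$ grows, while $\beta$ must be small relative to that very $\eps(R)$ for the ``near'' regime to contract. Everything downstream --- completeness, the passage from the $\CP_{t_\star}$-skeleton to the full semigroup, and the conversion of $\mathcal{W}_\beta$-contraction into the $V$-weighted total-variation bound --- is routine.
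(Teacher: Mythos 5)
Your proposal is correct, and it is essentially the argument the paper itself relies on: the paper does not prove this classical statement (it refers to \cite{MT,Harris}), and its own proof of Theorem~\ref{theo:HarrisWeak} specialised to $d=d_\TV$ is precisely your two-regime weighted contraction, the only difference being that your weight $2+\beta V(x)+\beta V(y)$ is the classical linear one while the paper's $\tilde d$ carries a square root --- exactly the point made in the remark following Theorem~\ref{theo:HarrisWeak}. Your parameter ordering (first $R>2\KV/(1-\alpha)$, then $\beta$ small against $\eps(R)$, then $\bar\alpha$ close to $1$) and the forward propagation of smallness via the $\TV$-nonexpansiveness of Markov operators are both sound, so the proof goes through as written.
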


The proof of Harris' theorem is based on the fact that a semigroup
satisfying these assumptions has a spectral gap in a modified total
variation distance, where the variation is weighted by the Lyapunov
function $V$. This theorem can clearly not be applied to Markov
semigroups generated by stochastic delay equations in general.  As
already mentioned earlier, it is indeed known that even in simple
cases where the diffusion coefficient $g$ only depends on the past of
the process, the initial condition can be recovered exactly from the
solution at any subsequent time. This implies that in such a case
\begin{equ}
\|\CP_t(x,\cdot\,)-\CP_t(y,\cdot\,)\|_\TV = 1
\end{equ}
for every $x \neq y$ and every $t>0$, so that \eref{e:SGcond}
fails. We would therefore like to replace the notion of a small set
\eref{e:SGcond} by a notion of ``closedness'' between transition
probabilities that reflects the topology of the underlying space $\X$.
Before we state our modified notion of a $d$-small set, we introduce
another notation: given a positive function $d\colon \X \times \X \to
\R_+$, we extend it to a positive function $d\colon \CM_1(\X) \times
\CM_1(\X) \to \R_+$, where $\CM_1(\X)$ stands for the set of (Borel)
probablity measures on $\X$, by
\begin{equ}[e:liftd]
  d(\mu,\nu) = \inf_{\pi \in \CC(\mu,\nu)} \int_{\X^2} d(x,y)\,\pi(\dd
  x,\dd y)\;.
\end{equ}
If $d$ is a metric, then its extension to $\CM_1(\X)$ is simply the
corresponding Wasserstein-$1$ distance. In this section, we will be
considering functions $d\colon \X\times \X \to \R_+$ that are not
necessarily metrics but that are ``distance-like'' in the following
sense:

\begin{definition}
  Given a Polish space $\X$, a function $d\colon \X\times \X \to \R_+$
  is distance-like if it is symmetric, lower semi-continuous, and such
  that $d(x,y) = 0 \Leftrightarrow x = y$.
\end{definition} 

Even though we think of $d$ as being a kind of metric, it need not
satisfy the triangle inequality. However, when lifted to the space of
probability measures, $d$ provides a reasonable way of measuring
distances between measures in the sense that $d(\mu, \nu) \ge 0$ and
$d(\mu,\nu) = 0 \Leftrightarrow \mu = \nu$, the latter property being
a consequence of the lower semi-continuity of $d$. The lower
semicontinuity of $d$ also ensures that the infimum in \eref{e:liftd}
is always reached by some coupling $\pi$.  With this notation at hand,
we set:

\begin{definition}\label{def:dsmall}
Let $\CP$ be a Markov operator over a Polish space $\X$ endowed with a distance-like 
function $d \colon \X \times \X \to [0,1]$.
A set $A \subset \X$ is said to be $d$-small if there exists $\eps > 0$ such that 
\begin{equ}[e:coupling]
d\bigl(\CP(x,\cdot\,),\CP(y,\cdot\,)\bigr) \le 1-\eps\;,
\end{equ}
for every $x,y \in A$.
\end{definition}

\begin{remark}
 If $d(x,y) = d_\TV(x,y) := \one_{x \neq y}$, then the notion of a $d$-small set coincides with the notion of a small
set given in the introduction, since $\|\mu-\nu\|_\TV =  d_\TV(\mu,\nu)$.
\end{remark}

In general, it is clear that having a Lyapunov function $V$ with $d$-small level sets cannot be sufficient
to imply the unique ergodicity of a Markov semigroup. A simple example is given by the Glauber dynamic of the
2D Ising
model which exhibits two distinct ergodic invariant measures at low
temperatures, but for which every set is $d$-small if $d$ is a distance function that
metrises the product topology on the state space $\{0,1\}^{\Z^2}$, for
example $d(\sigma, \sigma') = \sum_{k \in \Z^2} {1\over
  2^{|k|}}|\sigma_k - \sigma_k'|$.

This shows that if we wish to make use of the notion of a $d$-small set, we should impose additional assumptions
on the function $d$. One feature that distinguishes the total variation distance $d_\TV$ among other distance-like functions is that,
for any Markov operator $\CP$, one \textit{always} has the contraction property
\begin{equ}
d_\TV (\CP\mu, \CP\nu) \le d_\TV(\mu,\nu)\;.
\end{equ}
It is therefore natural to look for distance-like functions with a similar property. This motivates the following definition:

\begin{definition}\label{def:contr}
Let $\CP$ be a Markov operator over a Polish space $\X$ endowed with a distance-like 
function $d \colon \X \times \X \to [0,1]$. The function $d$ is said to be contracting for $\CP$ if there exists
$\alpha < 1$ such that the bound
\begin{equ}[e:contractd]
d(\CP(x,\cdot\,),\CP(y,\cdot\,)) \le \alpha d(x,y)
\end{equ}
holds for every pair $x,y \in \X$ with $d(x,y) < 1$.
\end{definition}

\begin{remark}
The assumption that $d$ takes values in $[0,1]$ is not a restriction at all. One can indeed check that if an unbounded function $d$ 
is contracting for a Markov operator $\CP$ and $A$ is a $d$-small set, then the same statements are true for $d$ replaced by
$d \wedge 1$.
\end{remark}
 
It may seem at first sight that \eref{e:contractd} alone is already
sufficient to guarantee the convergence of transition probabilities
toward a unique invariant measure. A little more thought shows that
this is not the case, since the total variation distance $d_\TV$ is
contracting for \textit{every} Markov semigroup.  The point here is
that \eref{e:contractd} says nothing about the pairs $(x,y)$ with
$d(x,y) = 1$, and this set may be very large.  However, combined with
the existence of a Lyapunov function $V$ that has $d$-small level
sets, it turns out that this contraction property is sufficient not
only for the existence and uniqueness of the invariant measure
$\mu_\star$, but even for having exponential convergence of transition
probabilities to $\mu_\star$ in a type of Wasserstein distance:

\begin{theorem}\label{theo:HarrisWeak}
  Let $\CP_t$ be a Markov semigroup over a Polish space $\X$ admitting
  a continuous Lyapunov function $V$.  Suppose furthermore that there
  exists $t_\star > 0$ and a distance-like function $d\colon \X\times
  \X\to[0,1]$ which is contracting for $\CP_{t_\star}$ and such that
  the level set $\{x \in \X \,:\, V(x) \le 4 \KV\}$ is $d$-small for
  $\CP_{t_\star}$. (Here $\KV$ is as in \eref{e:Lyapunov}.)

Then, $\CP_t$ can have at most one invariant probability measure $\mu_\star$. Furthermore, 
defining $\tilde d(x,y) = \sqrt{d(x,y)(1+V(x)+V(y))}$, there exists $t>0$ such that
\begin{equ}[e:contract]
\tilde d(\CP_t \mu, \CP_t \nu) \le {1\over 2} \tilde d(\mu,\nu)\;,
\end{equ} 
for all pairs of probability measures $\mu$ and $\nu$ on $\X$.
\end{theorem}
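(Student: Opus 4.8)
The plan is to establish the contraction \eref{e:contract} for some $t$ of the form $t = n t_\star$; uniqueness of the invariant measure then follows as a byproduct. Throughout I work with the discrete-time operator $\CP := \CP_{t_\star}$. Before starting I would reduce to the case $\lambda := \CV\e^{-\gamma t_\star} < \frac12$. This is legitimate because replacing $t_\star$ by $m t_\star$ preserves both hypotheses: the Lyapunov bound \eref{e:Lyapunov} holds at every time with the same constants $\CV,\gamma,\KV$ (so the level set $\{V\le 4\KV\}$ is unchanged and $\CV\e^{-\gamma m t_\star}\to 0$), while if $d$ is contracting for $\CP$ with constant $\alpha$ then, for \emph{measures} $\mu,\nu$ with $d(\mu,\nu)<1$, picking an optimal coupling $\pi$ for $d$ and splitting $\int d(\CP(u,\cdot),\CP(v,\cdot))\,\pi(\dd u,\dd v)$ over $\{d(u,v)<1\}$ and $\{d(u,v)=1\}$ yields $d(\CP\mu,\CP\nu)\le \alpha\, d(\mu,\nu)+(1-\alpha)\pi(\{d=1\})\le d(\mu,\nu)$; iterating this shows $d$ is contracting for $\CP_{m t_\star}$ with the same $\alpha$ and that $\{V\le 4\KV\}$ is $d$-small for $\CP_{m t_\star}$ with the same $\eps$. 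Hence we may assume $\lambda<\frac12$, $\alpha<1$, $\eps>0$.

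The heart of the proof is to show that for a small enough $\beta\in(0,1]$ the auxiliary function $\tilde d_\beta(x,y):=\sqrt{d(x,y)(1+\beta V(x)+\beta V(y))}$ is a one-step contraction for $\CP$ on points: $\tilde d_\beta(\CP(x,\cdot),\CP(y,\cdot))\le\rho\,\tilde d_\beta(x,y)$ for some $\rho<1$ and all $x,y$, where $\tilde d_\beta$ on measures is the lift \eref{e:liftd}. Inserting the optimal coupling for the bare distance $d$ into the definition and applying Cauchy--Schwarz gives the fundamental inequality
\begin{equ}
\tilde d_\beta\bigl(\CP(x,\cdot),\CP(y,\cdot)\bigr)^2 \;\le\; d\bigl(\CP(x,\cdot),\CP(y,\cdot)\bigr)\,\bigl(1+\beta\,\CP V(x)+\beta\,\CP V(y)\bigr)\;\le\; d\bigl(\CP(x,\cdot),\CP(y,\cdot)\bigr)\,\bigl(1+\beta\lambda W+2\beta\KV\bigr)\;,
\end{equ}
where $W := V(x)+V(y)$ and we used \eref{e:Lyapunov}. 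I would then split into three cases. (i) If $d(x,y)<1$, contractivity of $d$ supplies a factor $\alpha$ and, since $\lambda\le1$, the bound closes for any $\rho^2\in(\alpha,1)$ once $\beta$ is small. (ii) If $d(x,y)=1$ and $V(x),V(y)\le 4\KV$, then $W\le 8\KV$ and $d$-smallness gives $d(\CP(x,\cdot),\CP(y,\cdot))\le1-\eps$; using $\tilde d_\beta(x,y)^2 = 1+\beta W\ge1$ the bound closes for $\rho^2\in(1-\eps,1)$ and $\beta$ small. (iii) If $d(x,y)=1$ but, say, $V(x)>4\KV$, only $d(\CP(x,\cdot),\CP(y,\cdot))\le1$ is available, but now $W>4\KV$ so the Lyapunov decay wins: writing $\rho^2=1-\delta$, one needs $1+\beta\lambda W+2\beta\KV\le(1-\delta)(1+\beta W)$, which follows from $W>4\KV$ provided $\delta<\frac12-\lambda$ and $\beta\ge\delta/\bigl(\KV(2-4\lambda-4\delta)\bigr)$.

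A short computation shows that for $\delta$ small enough the lower bound on $\beta$ from case (iii) is compatible with the upper bounds from cases (i) and (ii) and with $\beta\le1$, so such a pair $(\beta,\rho)$ exists — and this is exactly where the specific threshold $4\KV$ (rather than an arbitrary $C$) and the reduction $\lambda<\frac12$ are used. Once the pointwise contraction is in hand, lower semi-continuity of $\tilde d_\beta$ (inherited from that of $d$ and continuity of $V$) guarantees optimal couplings, so the contraction lifts verbatim to measures and iterates: $\tilde d_\beta(\CP^n\mu,\CP^n\nu)\le\rho^n\,\tilde d_\beta(\mu,\nu)$. Since $\beta\le1$ implies $\sqrt\beta\,\tilde d\le\tilde d_\beta\le\tilde d$, we get $\tilde d(\CP_{nt_\star}\mu,\CP_{nt_\star}\nu)\le\beta^{-1/2}\rho^n\,\tilde d(\mu,\nu)$; choosing $n$ with $\beta^{-1/2}\rho^n\le\frac12$ and $t=nt_\star$ yields \eref{e:contract}. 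For uniqueness, any invariant probability measure satisfies $\int V\,\dd\mu\le\KV/(1-\lambda)<\infty$ (standard from \eref{e:Lyapunov}), so $\tilde d(\mu,\nu)\le\sqrt{1+\int V\,\dd\mu+\int V\,\dd\nu}<\infty$ via the product coupling; if $\mu,\nu$ are both invariant then $\tilde d(\mu,\nu)=\tilde d(\CP_t\mu,\CP_t\nu)\le\frac12\tilde d(\mu,\nu)$ forces $\tilde d(\mu,\nu)=0$, and since $\tilde d(x,y)=0\Leftrightarrow x=y$ and $\tilde d$ is lower semi-continuous this gives $\mu=\nu$. The main obstacle is the three-way case analysis with the free parameter $\beta$: one must reconcile the requirement that $\beta$ be \emph{large} relative to the Lyapunov deficit (to extract a gain when $V$ is big, case (iii)) with the requirement that it be \emph{small} (to extract a gain near the small set and on $\{d<1\}$, cases (i)--(ii)), which is precisely what taking $\rho^2$ close to $1$ and $\lambda<\frac12$ makes possible.
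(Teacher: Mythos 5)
Your proposal is correct and follows essentially the same route as the paper's proof: a preliminary reduction to a larger $t_\star$ (the paper takes $\CV\e^{-\gamma t_\star}\le\tfrac18$, you take $<\tfrac12$), the weighted distance $\tilde d_\beta$ with the Cauchy--Schwarz/optimal-coupling inequality, a three-regime analysis ($d<1$; both points in the $d$-small level set; Lyapunov region), tuning of $\beta$, lifting the pointwise contraction to measures, iterating to get the factor $\tfrac12$, and deducing uniqueness from finiteness of $\tilde d$ between invariant measures plus lower semicontinuity. The only differences are cosmetic bookkeeping: your case split (both $V\le4\KV$ versus $V(x)+V(y)\le4\KV$) and your coupling of $\beta$ to the contraction deficit $\delta$ in the Lyapunov regime, where the paper instead obtains a $\beta$-dependent rate valid for every fixed $\beta$.
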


\begin{remark}
  In the special case $d = d_\TV$, we simply recover Harris' theorem,
  as stated for example in \cite{Harris}, so that this is a genuinely
  stronger statement.  It is in this sense that
  Theorem~\ref{theo:HarrisWeak} is a ``weak'' version of Harris'
  theorem where the notion of a ``small set'' has been replaced by the
  notion of a $d$-small set for a contracting distance-like function
  $d$.  The only small difference is that Harris' theorem tells us
  that the Markov semigroup $\CP_t$ exhibits a spectral gap in a total
  variation norm weighted by $1 + V$, whereas we obtain a spectral gap
  in a total variation norm weighted by $1 + \sqrt V$.  This is
  because the proof of Harris' theorem does not require the ``close to
  each other'' step (since if $d(x,y) < 1$, one has $x = y$ and the
  estimate is trivial), so that we never need to apply the
  Cauchy-Schwarz inequality.
\end{remark}

\begin{proof}[of Theorem~\ref{theo:HarrisWeak}]
Before we start the proof itself, we note that we can assume without loss of generality that 
$t_\star > \log(8\CV)/\gamma$, so that
\begin{equ}[e:boundPV8]
\CP_{t_\star} V \le {1\over 8} V + \KV\;. 
\end{equ}
This is a simple consequence of the following two facts that can be checked in
a straightforward way from the definitions:
\begin{claim}
\item If $d$ is contracting for two Markov operators $\CP$ and $\CQ$, then it is also contracting for the product 
$\CP \CQ$. (Actually it is sufficient for $d$ to be contracting for $\CP$ and to have 
\eref{e:contractd} with $\alpha = 1$ for $\CQ$.)
\item If a set $A$ is $d$-small for $\CQ$ and $d$ is contracting for $\CP$, then $A$ is also $d$-small for $\CP\CQ$.
\end{claim}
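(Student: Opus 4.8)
The plan is to deduce both assertions from one elementary observation about the lifted function \eref{e:liftd}: a Markov operator satisfying \eref{e:contractd} with constant $\alpha \le 1$ is automatically \emph{non-expansive} for the lifted $d$, i.e.\ $d(\CP\mu,\CP\nu) \le d(\mu,\nu)$ for all probability measures $\mu,\nu$. (Throughout I use the convention that the product acts on measures by $(\CP\CQ)\mu = \CP(\CQ\mu)$, equivalently $(\CP\CQ)(x,\cdot\,) = \int_\X \CP(z,\cdot\,)\,\CQ(x,\dd z)$.) To see this for $\CP$, which being contracting satisfies \eref{e:contractd} with some $\alpha \le 1$, note first that
\begin{equ}
  d\bigl(\CP(u,\cdot\,),\CP(v,\cdot\,)\bigr) \le d(u,v) \qquad\text{for all } u,v \in \X\;,
\end{equ}
since this is \eref{e:contractd} when $d(u,v) < 1$ and is automatic when $d(u,v) = 1$ because $d \le 1$ forces every coupling of $\CP(u,\cdot\,)$ and $\CP(v,\cdot\,)$ to cost at most $1$. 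Then, given $\mu$ and $\nu$, take an optimal coupling $\pi$ of $\mu$ and $\nu$ (it exists since $d$ is lower semi-continuous), pick for each $(u,v)$ --- in a measurable way --- an optimal coupling $\pi_{u,v}$ of $\CP(u,\cdot\,)$ and $\CP(v,\cdot\,)$, and assemble them into the coupling $\Theta = \int \pi_{u,v}\,\pi(\dd u,\dd v)$ of $\CP\mu$ and $\CP\nu$; this gives $d(\CP\mu,\CP\nu) \le \int d(\CP(u,\cdot\,),\CP(v,\cdot\,))\,\pi(\dd u,\dd v) \le \int d(u,v)\,\pi(\dd u,\dd v) = d(\mu,\nu)$. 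The same reasoning applies to $\CQ$ whenever $\CQ$ only satisfies \eref{e:contractd} with $\alpha = 1$.

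Both bullets then follow in one line. For the first, if $d$ is contracting for $\CQ$ with constant $\alpha < 1$ and (as assumed) also for $\CP$, then for $x,y$ with $d(x,y) < 1$ one has $d(\CQ(x,\cdot\,),\CQ(y,\cdot\,)) \le \alpha\, d(x,y)$, and applying the non-expansiveness of $\CP$ to the measures $\CQ(x,\cdot\,),\CQ(y,\cdot\,)$ yields
\begin{equ}
  d\bigl((\CP\CQ)(x,\cdot\,),(\CP\CQ)(y,\cdot\,)\bigr) \le d\bigl(\CQ(x,\cdot\,),\CQ(y,\cdot\,)\bigr) \le \alpha\, d(x,y)\;,
\end{equ}
so $d$ is contracting for $\CP\CQ$ with the same constant; moreover only \eref{e:contractd} with $\alpha = 1$ was used about one of the two factors, which (after possibly relabelling the factors) is the parenthetical strengthening. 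For the second, $d$-smallness of $A$ for $\CQ$ reads $d(\CQ(x,\cdot\,),\CQ(y,\cdot\,)) \le 1 - \eps$ for all $x,y \in A$, and the non-expansiveness of $\CP$ at once gives $d((\CP\CQ)(x,\cdot\,),(\CP\CQ)(y,\cdot\,)) \le 1 - \eps$ for all $x,y \in A$, i.e.\ $A$ is $d$-small for $\CP\CQ$ with the same $\eps$.

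The only step that is not pure bookkeeping is the gluing construction inside the observation: defining $\Theta$ requires, beyond the lower semi-continuity of $d$ (which guarantees that the optimal couplings $\pi$ and $\pi_{u,v}$ exist), a measurable selection of the map $(u,v) \mapsto \pi_{u,v}$, and this is the one point I expect to need a line of care; both facts are standard. Everything else is immediate from the definitions, and the structural reason that it costs nothing to impose \eref{e:contractd} only on the set $\{d(x,y) < 1\}$ is exactly that the pointwise bound $d(\CP(u,\cdot\,),\CP(v,\cdot\,)) \le d(u,v)$ nonetheless holds on the complement $\{d = 1\}$ for free, $d$ being bounded by $1$ there.
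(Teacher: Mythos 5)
Your proof is correct. The paper offers no argument for these two facts (it merely asserts they ``can be checked in a straightforward way from the definitions''), so there is nothing to compare against line by line; your write-up is the natural verification. The two ingredients you isolate are exactly the right ones: the observation that \eref{e:contractd} on $\{d<1\}$ upgrades for free to the global pointwise bound $d(\CP(u,\cdot\,),\CP(v,\cdot\,))\le d(u,v)$ because $d\le 1$, and the gluing inequality $d(\CP\mu,\CP\nu)\le\int d(\CP(u,\cdot\,),\CP(v,\cdot\,))\,\pi(\dd u,\dd v)$, which is precisely the convexity property of the lifted distance that the paper itself invokes (also without proof) a few lines later for $\tilde d$. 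The measurable-selection point you flag is real but standard, and is of the same nature as what the paper handles via Kuratowski--Ryll-Nardzewski in Lemma~\ref{lem:measure}.

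The one genuinely delicate spot is the one you pass over with ``after possibly relabelling the factors''. With your convention $(\CP\CQ)(x,\cdot\,)=\int_\X\CP(z,\cdot\,)\,\CQ(x,\dd z)$, the factor carrying the strict contraction in the first bullet, and the factor for which $A$ is $d$-small in the second, must be the one applied \emph{first}, i.e.\ at the level of points; the factor applied afterwards only ever sees the resulting pair of measures and therefore only needs non-expansiveness. This asymmetry is genuine and not a notational quibble: the strict inequality \eref{e:contractd} does \emph{not} lift from points to measures, since an optimal coupling of two measures at $d$-distance below $1$ may still charge pairs with $d(u,v)=1$, where only the trivial bound is available. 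So the parenthetical remark is correct only with the roles assigned as in your argument. In the paper the claim is applied exclusively to powers of the single operator $\CP_{t_\star}$, where the ambiguity is harmless, but your proof identifies the version that is actually true for non-commuting factors.
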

Note also that the function $\tilde d\colon \CM_1(\X)\times \CM_1(\X)\to \R_+$ is convex in each of its
arguments, so that the bound
\begin{equ}
\tilde d(\CP_t \mu , \CP_t \nu) \le \int_{\X \times\X} \tilde d(\CP_t (x,\cdot\,), \CP_t (y,\cdot\,)) \,\pi(dx,dy)\;,
\end{equ}
is valid for any coupling $\pi \in \CC(\mu,\nu)$.
As a consequence, in order to show \eref{e:contract}, it is sufficient to show that it holds in the particular case where
$\mu$ and $\nu$ are Dirac measures. In other words, it is sufficient to show that there exists $t>0$ and $\alpha' < 1$
such that
\begin{equ}[e:contract2]
\tilde d(\CP_t (x,\cdot\,), \CP_t (y,\cdot\,)) \le \alpha' \tilde d(x,y)\;,
\end{equ}
for every $x,y \in \X$. Note also that \eref{e:contract} is sufficient to conclude that $\CP_t$ can have
at most one invariant measure by the following argument. Since $V$ is a Lyapunov function for $\CP_t$,
it is integrable with respect to any invariant measure so that, if $\mu$ and $\nu$ are any two such measures,
one has $\tilde d(\mu,\nu) < \infty$. It then follows immediately from \eref{e:contract} and from the
invariance of $\mu$, $\nu$, that $\tilde d(\mu,\nu) = 0$. It follows from the lower semicontinuity of $\tilde d$
that $\mu = \nu$ as required.

In order to show that \eref{e:contract2} holds, we make use of a trick similar to the one used in
\cite{Gap}. For $\beta > 0$ a (small) parameter to be determined later,
we set
\begin{equ}
\tilde d_{\beta}(x,y) = \sqrt{d(x,y)\bigl(1 + \beta V(x) + \beta V(y)\bigr)}\;.
\end{equ}
Note that, because of the positivity of $V$, there exist constants $c$ and $C$ (depending on $\beta$ of course)
such that $c \tilde d(x,y) \le \tilde d_{\beta}(x,y) \le C \tilde d(x,y)$. As a consequence, if we can show
 \eref{e:contract2} for $\tilde d_\beta$ with some value of the parameter $\beta$, then it also holds for
 $\tilde d$ by possibly considering a larger time $t$. Just as in \cite{Gap}, we now proceed by showing that 
$\beta$ can be tuned in such a way that
\eref{e:contract2} holds, whether $x$ and $y$ are ``close to each other,'' ``far from the origin'' or ``close to the origin.''

\noindent\textbf{Close to each other.} This is the situation where $d(x,y) < 1$, so that 
\begin{equ}
\tilde d_{\beta}^2(x,y) = d(x,y)\bigl(1 + \beta V(x) + \beta V(y)\bigr)\;.
\end{equ}
In this situation, we make use of the contractivity of $d$, the fact that $V$ is a Lyapunov function,
and the Cauchy-Schwarz inequality to obtain
\begin{equs}
\bigl(\tilde d_{\beta}(\CP_{t_\star} (x,\cdot\,), \CP_{t_\star} (y,\cdot\,))\bigr)^2 &\le \inf_{\pi}
\int d(x',y') \pi(\dd x',\dd y')\, \int \bigl(1 + \beta V(x')+ \beta V(y')\bigr) \pi(\dd x',\dd y') \\
&\le \alpha d(x,y) \bigl(1 + \textstyle{\beta \over 8} (V(x) + V(y)) + 2\beta \KV\bigr) \;,
\end{equs}
where the infimum runs over all $\pi \in \CC(\CP_{t_\star} (x,\cdot\,), \CP_{t_\star} (y,\cdot\,))$.
For any given $\alpha_1 \in(\alpha,1)$, we can furthermore choose $\beta$ sufficiently small such that $\alpha(1+2\beta \KV) \le 
\alpha_1$, so that
\begin{equ}
\bigl(\tilde d_{\beta}(\CP_{t_\star} (x,\cdot\,), \CP_{t_\star} (y,\cdot\,))\bigr)^2 \le \alpha_1 \tilde d_{\beta}^2(x,y)\;.
\end{equ} 

\noindent\textbf{Far from the origin.} This is the situation where $d(x,y) \ge 1$ and $V(x) + V(y) \ge 4\KV$,
so that 
\begin{equ}
\tilde d_{\beta}^2(x,y) = 1 + \beta \bigl(V(x) + V(y)\bigr) \ge 1+3\beta \KV +  {\beta \over 4} \bigl(V(x) + V(y)\bigr)\;.
\end{equ}
Using the Lyapunov structure \eref{e:Lyapunov}, we thus get
\begin{equs}
\bigl(\tilde d_{\beta}(\CP_{t_\star} (x,\cdot\,), \CP_{t_\star} (y,\cdot\,))\bigr)^2 &\le 1  + 2\beta \KV + \CV\beta \e^{-\gamma t_\star} (V(x) + V(y))\le 1  + 2\beta \KV + {\beta \over 8} (V(x) + V(y)) \\
&\le \max \Bigl\{{1  + 2\beta \KV \over 1+3\beta \KV}, {1\over 2}\Bigr\}\tilde d_{\beta}^2(x,y) =: \alpha_2 \tilde d_{\beta}^2(x,y)\;,
\end{equs}
where we made use again of \eref{e:boundPV8}.
While $\alpha_2$ depends on the choice of $\beta$,
we see that for any fixed $\beta>0$, one has $\alpha_2 < 1$.

\noindent\textbf{Close to the origin.} This is the final situation where $d(x,y) = 1$ and $V(x) + V(y) \le 4\KV$, so that $\tilde d(x,y) \ge 1$.
In this case, we make use of the fact that the level set $\{x\,:\, V(x) \le 4\KV\}$ is assumed to be small 
to conclude that there exists a coupling $\pi$ for $\CP_{t_\star} (x,\cdot\,)$ and $\CP_{t_\star} (y,\cdot\,)$  and a constant $\eps > 0$
such that $\int d\,\dd \pi \le 1-\eps$, so that 
\begin{equs}
\bigl(\tilde d_{\beta}(\CP_{t_\star} (x,\cdot\,), \CP_{t_\star} (y,\cdot\,))\bigr)^2 &\le 
\int d(x',y') \pi(\dd x',\dd y')\, \int \bigl(1 + \beta V(x')+ \beta V(y')\bigr) \pi(\dd x',\dd y') \\
&\le (1-\eps) \bigl(1 + 2\beta \KV + 2 \beta \CV \e^{-\gamma t_\star} \bigr)\le (1-\eps) \bigl(1 + 4\beta \KV\bigr) \tilde d(x,y) \;,
\end{equs}
where we made again use of \eref{e:boundPV8}. Here, $\eps> 0$ is independent of $\beta$.
Therefore, choosing $\beta$ sufficiently small (for example $\beta = \eps / (4\KV)$), we can again make sure that 
the constant appearing in this expression is strictly smaller than $1$, thus concluding the proof of Theorem~\ref{theo:HarrisWeak}.
\end{proof}

\begin{remark}
If the assumptions of the theorem hold uniformly for $t_\star$ belonging to an open interval of times, then one
can check that Theorem~\ref{theo:HarrisWeak} implies that there exists $r > 0$ and $t_0 > 0$ such that the bound
\begin{equ}
\tilde d(\CP_t \mu, \CP_t \nu) \le e^{-r t} \tilde d(\mu,\nu)\;,
\end{equ}
holds for all $t > t_0$, instead of multiples of $t_\star$ only.
\end{remark}

If $d$ is somewhat comparable to a metric, it turns out that we can even infer the
\textit{existence} of an invariant measure from the assumptions of Theorem~\ref{theo:HarrisWeak}, just
like in the case of Harris' theorem:

\begin{corollary}
If there exists a complete metric $d_0$ on $\X$ such that $d_0 \le \sqrt{d}$ and such that $\CP_t$ is Feller on $\X$, then under the assumptions of 
Theorem~\ref{theo:HarrisWeak}, there exists a unique invariant measure $\mu_\star$ for $\CP_t$.
\end{corollary}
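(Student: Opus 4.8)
The plan is to get uniqueness for free from Theorem~\ref{theo:HarrisWeak} and to construct an invariant measure by iterating the contraction \eref{e:contract} along the orbit of a single point, using $d_0$ only to turn the resulting estimates into genuine Cauchy estimates for a complete metric.

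First I would fix a time $t>0$ for which $\tilde d(\CP_t \mu, \CP_t \nu) \le {1\over 2}\tilde d(\mu,\nu)$ holds (provided by Theorem~\ref{theo:HarrisWeak}), and pick a base point $x_0 \in \X$ with $V(x_0) < \infty$; such a point exists since $V$ is $\CP_t(x,\cdot\,)$-integrable for every $x$ and hence cannot be identically $+\infty$. Set $\mu_n = \CP_{nt}(x_0,\cdot\,)$, so that $\mu_0 = \delta_{x_0}$. The Lyapunov bound \eref{e:Lyapunov} gives $\int V\,d\mu_n \le \CV V(x_0) + \KV =: M < \infty$ for all $n \ge 1$, and $\int V\,d\mu_0 = V(x_0)$. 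Since $d \le 1$ and $1 + V(x') + V(y') \ge 1$, any coupling $\pi$ of $\mu_0$ and $\mu_1$ satisfies $\int \tilde d\,d\pi \le \int \bigl(1 + V(x') + V(y')\bigr)\,\pi(dx',dy') = 1 + V(x_0) + \int V\,d\mu_1 < \infty$, so $\tilde d(\mu_0,\mu_1) < \infty$. Iterating \eref{e:contract} then yields $\tilde d(\mu_n,\mu_{n+1}) \le 2^{-n}\,\tilde d(\mu_0,\mu_1)$, which is summable in $n$.

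Next I would pass to the complete metric. Because $d_0 \le \sqrt d$ and $\sqrt{d(x,y)} \le \sqrt{d(x,y)\bigl(1+V(x)+V(y)\bigr)} = \tilde d(x,y)$, one has $d_0 \le \tilde d$ pointwise, hence for the $1$-Wasserstein distance $W_{d_0}$ over $\X$ associated with $d_0$ one has $W_{d_0}(\mu,\nu) \le \tilde d(\mu,\nu)$ for all probability measures (since $\int d_0\,d\pi \le \int \tilde d\,d\pi$ for every coupling $\pi$). Consequently $\sum_n W_{d_0}(\mu_n,\mu_{n+1}) < \infty$, so $(\mu_n)$ is $W_{d_0}$-Cauchy. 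Since $d_0$ is a complete separable metric for the topology of $\X$ and is bounded by $1$, the space of probability measures equipped with $W_{d_0}$ is complete and $W_{d_0}$-convergence coincides with weak convergence \cite{Villani}; hence $\mu_n$ converges weakly to some probability measure $\mu_\star$. The Feller property finishes this step: for $\varphi \in C_b(\X)$ one has $\int \varphi\,d(\CP_t\mu_n) = \int \CP_t\varphi\,d\mu_n \to \int \CP_t\varphi\,d\mu_\star = \int \varphi\,d(\CP_t\mu_\star)$, whereas $\CP_t\mu_n = \mu_{n+1} \to \mu_\star$ weakly, so $\CP_t\mu_\star = \mu_\star$.

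Finally I would upgrade invariance under $\CP_t$ to invariance under the whole semigroup and invoke uniqueness. For every $s \ge 0$, $\CP_s\mu_\star$ is again $\CP_t$-invariant since $\CP_t\CP_s = \CP_s\CP_t$; moreover $\int V\,d\mu_\star \le \liminf_n \int V\,d\mu_n \le M$ by lower semicontinuity of $V$, and then $\int V\,d(\CP_s\mu_\star) \le \CV \int V\,d\mu_\star + \KV < \infty$ by \eref{e:Lyapunov}, so as above $\tilde d(\CP_s\mu_\star,\mu_\star) < \infty$. Applying \eref{e:contract} to the pair $(\CP_s\mu_\star,\mu_\star)$ gives $\tilde d(\CP_s\mu_\star,\mu_\star) = \tilde d(\CP_t\CP_s\mu_\star,\CP_t\mu_\star) \le {1\over 2}\tilde d(\CP_s\mu_\star,\mu_\star)$, whence $\tilde d(\CP_s\mu_\star,\mu_\star) = 0$ and therefore $\CP_s\mu_\star = \mu_\star$ by the lower semicontinuity of $\tilde d$ (exactly as in the proof of Theorem~\ref{theo:HarrisWeak}). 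Uniqueness is already contained in that theorem. The only genuinely delicate point is the topological bookkeeping in the third step — verifying that $W_{d_0}$ is a complete metric inducing weak convergence — which is precisely what the hypotheses provide: completeness of $d_0$, the inequality $d_0 \le \sqrt d$ (rather than $d_0 \le d$, since $\tilde d$ only controls $\sqrt d$ from below), and the Feller property of $\CP_t$.
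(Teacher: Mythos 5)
Your construction is correct and follows essentially the same route as the paper: iterate the contraction for $\tilde d$ starting from a measure with finite $V$-integral, use $d_0 \le \sqrt d \le \tilde d$ to conclude that the iterates are Cauchy for the Wasserstein distance built on the complete metric $d_0$, and use the Feller property to identify the weak limit as a $\CP_t$-invariant measure. The only point where you diverge is the passage from invariance under $\CP_t$ to invariance under the whole semigroup: the paper performs a Krylov--Bogolyubov step, defining $\mu_\star$ by averaging $\CP_s\mu_\infty$ over $s \in [0,t]$, whereas you apply the contraction \eref{e:contract} directly to the pair $(\CP_s\mu_\star,\mu_\star)$ (both having finite $V$-moment) to force $\tilde d(\CP_s\mu_\star,\mu_\star)=0$; this is equally valid and even slightly cleaner, since it shows that the limit measure itself is invariant for every $\CP_s$ without the averaging construction.
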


\begin{proof}
It only remains to show that an invariant measure exists for $\CP_t$. Fix an arbitrary probability measure
$\mu$ on $\X$ such that $\int V\,d\mu < \infty$ and let $t$ be the time obtained from Theorem~\ref{theo:HarrisWeak}. 
Since $\tilde d \ge \sqrt d \ge d_0$ by assumption and since $\tilde d(\mu, \CP_t \mu) < \infty$
by \eref{e:Lyapunov},
it then follows from \eref{e:contract2} that
\begin{equ}
d_0(\CP_{nt} \mu, \CP_{(n+1)t} \mu) \le {\tilde d(\CP_{nt} \mu, \CP_{(n+1)t} \mu)} \le {\tilde d(\mu, \CP_t \mu)\over 2^n}\;,
\end{equ}
so that the sequence $\{\CP_{nt}\mu\}_{n \ge 0}$ is Cauchy in the space of
probability measures on $\X$ endowed with the Wasserstein-$1$ distance associated to $d_0$.
Since this space is complete \cite{Villani}, there exists $\mu_\infty$ such that $\CP_{nt}\mu \to \mu_\infty$ weakly. 
In particular, the Feller property of $\CP_t$ implies $\CP_t \mu_\infty = \mu_\infty$ so that, defining $\mu_\star$ by
\begin{equ}
\mu_\star(A) = {1\over t} \int_0^t \bigl(\CP_s \mu_\infty\bigr)(A)\,\dd s\;,
\end{equ}
one can check that $\CP_r \mu_\star = \mu_\star$ for every $r>0$ as required.
\end{proof}

One standard way of using a ``spectral gap'' result like Theorem~\ref{theo:HarrisWeak} is to 
obtain the stability of the invariant measure with respect to small perturbations of the dynamic. 
Assume for the sake of the argument that $\tilde d$ satisfies the triangle inequality (in general it doesn't; see
below) and that we have a sequence of ``approximating semigroups'' $\CP_t^\eps$ such that the bound
\begin{equ}
\tilde d\bigl(\CP_t^\eps(x,\cdot), \CP_t (x,\cdot)\bigr) \le \eps C(t) \tilde V(x)\;,
\end{equ}
holds, where $C$ is a function that is bounded on bounded subsets of $\R$ and $\tilde V$ is some
positive function $\tilde V\colon \X \to \R_+$. 

Let now $\mu_\star$ denote the
invariant measure for $\CP_t$ and $\mu_\star^\eps$ an invariant measure for $\CP_t^\eps$ (which need
not be unique). Choosing $t$ as in \eref{e:contract}, one then has the bound
\begin{equs}
\tilde d(\mu_\star,\mu_\star^\eps) &= \tilde d(\CP_t \mu_\star,\CP_t^\eps \mu_\star^\eps) 
\le\tilde d(\CP_t \mu_\star,\CP_t \mu_\star^\eps) + \tilde d(\CP_t \mu_\star^\eps,\CP_t^\eps \mu_\star^\eps) \\
&\le {1\over 2} \tilde d(\mu_\star,\mu_\star^\eps) + \eps C(t) \int_\X \tilde V(x)\,\mu_\star^\eps(\dd x)\;,
\end{equs}
from which we deduce that $\tilde d(\mu_\star,\mu_\star^\eps) \le 2 \eps C(t) \int_\X \tilde V(x)\,\mu_\star^\eps(\dd x)$.
If one can obtain an \textit{a priori} bound on $\mu_\star^\eps$ that ensures that $\int_\X \tilde V(x)\,\mu_\star^\eps(\dd x)$
is bounded independently of $\eps$, this shows that the distance between the invariant measures for
$\CP_t$ and $\CP_t^\eps$ is comparable to the distance between the transition probabilities for some fixed
time $t$.

This argument is still valid if the distance function $\tilde d$ satisfies a weak form of the triangle inequality,
\ie if there exists a positive constant $K>0$ such that
\begin{equ}[e:almosttriangle]
\tilde d(x,y) \le K \bigl(\tilde d(x,z) + \tilde d(z,y)\bigr)\;,
\end{equ}
for every $x,y,z \in \X$. This turns out to be often satisfied in practice, due to the following result:

\begin{lemma}\label{lem:triangleweak}
Let $d\colon \X\times \X\to [0,1]$ be a distance-like function and assume that there exists a constant $K_d$
such that 
\begin{equ}[e:atd]
d(x,y) \le K_d \bigl(d(x,z) + d(z,y)\bigr)\;,
\end{equ}
holds for every $x,y,z \in \X$. Assume furthermore that  $V \colon \X \to \R_+$ is such that
there exist constants $c$, $C$ such that the implication
\begin{equ}[e:propV]
d(x,z) \le c \qquad\Rightarrow\qquad V(x) \le C V(z)
\end{equ}
holds. Then, there exists a constant $K$ such that \eref{e:almosttriangle} holds for $\tilde d$ defined
as in Theorem~\ref{theo:HarrisWeak}.
\end{lemma}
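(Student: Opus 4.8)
The plan is to reduce the claim to the pointwise bound $\tilde d(x,y)^2 \le K'\bigl(\tilde d(x,z)^2 + \tilde d(z,y)^2\bigr)$ for all $x,y,z \in \X$, from which \eref{e:almosttriangle} follows at once by taking square roots and using $\sqrt{a+b}\le \sqrt a+\sqrt b$, with $K=\sqrt{K'}$. To obtain this bound I would start from $\tilde d(x,y)^2 = d(x,y)\bigl(1+V(x)+V(y)\bigr)$, apply the quasi-triangle inequality \eref{e:atd} for $d$, and distribute. The ``diagonal'' contributions are harmless, since $d(x,z)\bigl(1+V(x)\bigr) \le d(x,z)\bigl(1+V(x)+V(z)\bigr) = \tilde d(x,z)^2$ and likewise for the pair $(z,y)$. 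What remains are the two ``cross terms'' $d(x,z)\,V(y)$ and $d(z,y)\,V(x)$, and controlling these is the only genuine step.

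The tool for the cross terms is the elementary observation that, for any $u,w\in\X$,
\[
V(w) \le {1\over c}\,\tilde d(u,w)^2 + C\,V(u)\;.
\]
To see this, split on the value of $d(u,w)$: if $d(u,w)>c$ then $\tilde d(u,w)^2 > c\bigl(1+V(u)+V(w)\bigr) \ge c\,V(w)$, while if $d(u,w)\le c$ then \eref{e:propV} (applied after using the symmetry of $d$) gives $V(w)\le C\,V(u)$; either way the displayed inequality holds, since $C,V(u)\ge 0$. Feeding this into $d(x,z)\,V(y)$, bounding $d(x,z)\le 1$ and using the trivial estimate $d(x,z)\,V(z)\le d(x,z)\bigl(1+V(x)+V(z)\bigr)=\tilde d(x,z)^2$, I would obtain
\[
d(x,z)\,V(y) \le {1\over c}\,\tilde d(z,y)^2 + C\,\tilde d(x,z)^2\;,
\]
and symmetrically $d(z,y)\,V(x) \le {1\over c}\,\tilde d(x,z)^2 + C\,\tilde d(z,y)^2$.

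Collecting all the pieces then gives $\tilde d(x,y)^2 \le K_d\bigl(d(x,z)+d(z,y)\bigr)\bigl(1+V(x)+V(y)\bigr) \le K_d\,(1+C+c^{-1})\,\bigl(\tilde d(x,z)^2+\tilde d(z,y)^2\bigr)$, so the lemma holds with $K=\sqrt{K_d\,(1+C+c^{-1})}$. The main obstacle, such as it is, is precisely the cross term $d(x,z)\,V(y)$: a direct application of the quasi-triangle inequality for $d$ alone cannot handle it, because $d(x,z)$ being small carries no information about $V(y)$; it is the auxiliary inequality above — which converts the small factor of $d$ either into a $\tilde d^2$ term or into a comparison of values of $V$ via \eref{e:propV} — that breaks this circularity. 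Everything else is bookkeeping with nonnegative quantities.
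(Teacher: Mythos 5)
Your proof is correct. Like the paper, you reduce \eref{e:almosttriangle} to the squared statement $\tilde d(x,y)^2 \le K'\bigl(\tilde d(x,z)^2+\tilde d(z,y)^2\bigr)$ and conclude by taking square roots; your handling of the cross terms is sound, since the auxiliary bound $V(w)\le c^{-1}\tilde d(u,w)^2+C\,V(u)$ does follow from the dichotomy $d(u,w)>c$ versus $d(u,w)\le c$ (using the symmetry of $d$ to apply \eref{e:propV} in the needed direction), and feeding it into $d(x,z)V(y)$ and $d(z,y)V(x)$ with $d\le 1$ gives exactly the constant you state. The difference with the paper's proof is one of organization rather than substance. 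The paper proves the same squared inequality \eref{e:wantedbound}, but first assumes by symmetry that $V(x)\ge V(y)$ and then splits globally on whether $d(x,z)\ge c$ or $d(x,z)\le c$: in the far case the quasi-triangle inequality \eref{e:atd} is not invoked at all (only $d\le 1$ together with $1\le d(x,z)/c$), while in the near case \eref{e:atd} and \eref{e:propV} are applied directly, producing a constant of the form $2K_d(1\vee C)$ there. You instead apply \eref{e:atd} unconditionally, isolate the two cross terms, and dispose of them with your auxiliary inequality, which internalizes the same near/far dichotomy relative to $c$. Your variant buys symmetry (no WLOG step), a modular lemma that could be reused, and the explicit constant $K=\sqrt{K_d(1+C+c^{-1})}$; the paper's variant is marginally shorter and makes visible that the triangle-type hypothesis on $d$ is only needed when $x$ and $z$ are $c$-close. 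Either way the same three ingredients carry the argument: boundedness of $d$, the comparison \eref{e:propV} below level $c$, and the weak triangle inequality \eref{e:atd}.
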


\begin{proof}
Note first that it is sufficient to show that there exists a constant $K$ such that
\begin{equ}[e:wantedbound]
d(x,y) \bigl(1+V(x)+V(y)\bigr) \le K \bigl(d(x,z) \bigl(1+V(x)+V(z)\bigr) + d(z,y) \bigl(1+V(z)+V(y)\bigr)\bigr)\;.
\end{equ}
Since $d$ is symmetric, we can assume without loss of generality that $V(x) \ge V(y)$. We consider the
following two cases

If $d(x,z) \ge c$, then the boundedness of $d$ implies the existence of a constant $\tilde C$ such that
\begin{equ}
d(x,y) \bigl(1+V(x)+V(y)\bigr) \le \tilde C\bigl(1+V(x)+V(y)\bigr)
\le {\tilde C\over c} d(x,z)\bigl(1+V(x)+V(y)\bigr) \le {2\tilde C\over c} d(x,z)\bigl(1+V(x)\bigr)\;,
\end{equ}
from which \eref{e:wantedbound} follows with $K = 2\tilde C/c$.

If $d(x,z) \le c$, we make use of our assumptions \eref{e:atd} and \eref{e:propV} to deduce that
\begin{equs}
d(x,y) \bigl(1+V(x)+V(y)\bigr) &\le K_d \bigl(d(x,z) + d(z,y)\bigr)\bigl(1+V(x)+V(y)\bigr) \\
&\le 2K_d d(x,z) \bigl(1+V(x)\bigr) + 2CK_d d(z,y) \bigl(1+V(z)\bigr)\;,
\end{equs}
from which \eref{e:wantedbound} follows with $K = 2K_d(1\vee C)$.
\end{proof}

\begin{remark}
If $\X$ is a Banach space and $d(x,y) = 1 \wedge \|x-y\|$, then Lemma~\ref{lem:triangleweak} essentially states that
$\tilde d$ satisfies \eref{e:almosttriangle} provided that $V(x)$ grows at most exponentially with $\|x\|$.
\end{remark}

The following result (which we already used in the previous section) relates the contraction property to the conditions 
in our main result on the convergence of transition probabilities.

\begin{proposition}\label{prop:contr}
Let $(\cP_t)$ be a Feller Markov semigroup on $\X$  and assume that there exists a continuous metric $d$ which generates the topology of $\X$ and 
which is contracting for $\cP_t$ for some $t>0$. Then the second condition in Theorem~\ref{thm:conv} is satisfied for the Markov kernel
$\cP_t$. 
\end{proposition}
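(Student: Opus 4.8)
The plan is to realise the second hypothesis of Theorem~\ref{thm:conv} for the kernel $\cP_t$ by a coupling that, at each step, uses a near-optimal coupling of the one-step transition kernels, glued together in a Markovian way, and to read off the localisation near a point $x$ from the contraction factor $\alpha<1$. First I would produce a \emph{measurable} family of one-step couplings. Write $\alpha<1$ for the contraction constant of $d$ for $\cP_t$. Since $\X$ is Polish, $d$ is a continuous function with $d\le1$, and $\cP_t$ is Feller (so that $a\mapsto\cP_t(a,\cdot\,)$ is continuous with values in $\CM(\X)$), the set-valued map
\[
(a,b)\ \longmapsto\ \bigl\{\pi\in\cC(\cP_t(a,\cdot\,),\cP_t(b,\cdot\,)) : \int_{\X\times\X} d\,\dd\pi\le\alpha\,d(a,b)\bigr\}
\]
has nonempty compact values and is measurable in $(a,b)$: nonemptiness when $d(a,b)<1$ holds because $d(\cP_t(a,\cdot\,),\cP_t(b,\cdot\,))\le\alpha\,d(a,b)$ by the contraction property and the infimum in \eref{e:liftd} is attained, while for $d(a,b)=1$ one simply replaces the set by all of $\cC(\cP_t(a,\cdot\,),\cP_t(b,\cdot\,))$; compactness holds since each $\cC(\mu,\nu)$ is weakly compact and $\pi\mapsto\int d\,\dd\pi$ is continuous ($d$ being bounded and continuous). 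By the Kuratowski--Ryll-Nardzewski selection theorem there is then a measurable selector $(a,b)\mapsto\pi_{a,b}$, with $\pi_{a,b}\in\cC(\cP_t(a,\cdot\,),\cP_t(b,\cdot\,))$ always and $\int d\,\dd\pi_{a,b}\le\alpha\,d(a,b)$ whenever $d(a,b)<1$.

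Next, for $(y,y')\in\X\times\X$ I would let $(Y_n,Y_n')_{n\ge0}$ be the Markov chain on $\X\times\X$ started at $(y,y')$ with one-step kernel $(a,b)\mapsto\pi_{a,b}$, and let $\Gamma_{y,y'}$ be the law of the resulting pair of trajectories in $\CM(\X^\infty\times\X^\infty)$. Because $\pi_{a,b}$ has marginals $\cP_t(a,\cdot\,)$ and $\cP_t(b,\cdot\,)$, each coordinate process is a Markov chain with kernel $\cP_t$ started at $y$ resp.\ $y'$, so $\Gamma_{y,y'}\in\cC(\cP_{[\infty]}\delta_y,\cP_{[\infty]}\delta_{y'})$, and measurability of $(y,y')\mapsto\Gamma_{y,y'}$ follows from that of $\pi$ by the standard iteration argument.

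The heart of the matter is then a short supermartingale-type estimate. Let $\cF_n$ be the natural filtration of $(Y_n,Y_n')$ and $\sigma=\inf\{n\ge0:d(Y_n,Y_n')\ge1\}$. On $\{\sigma>n\}\in\cF_n$ one has $d(Y_n,Y_n')<1$, so $\E[d(Y_{n+1},Y_{n+1}')\mid\cF_n]=\int d\,\dd\pi_{Y_n,Y_n'}\le\alpha\,d(Y_n,Y_n')$ there; since $\{\sigma>n+1\}\subseteq\{\sigma>n\}$, induction gives $\E[d(Y_n,Y_n')\one_{\sigma>n}]\le\alpha^n d(y,y')$. Markov's inequality then yields $\P(\sigma=n+1)\le\P(\sigma>n,\ d(Y_{n+1},Y_{n+1}')\ge1)\le\E[d(Y_{n+1},Y_{n+1}')\one_{\sigma>n}]\le\alpha^{n+1}d(y,y')$, hence $\P(\sigma<\infty)\le\frac{\alpha}{1-\alpha}d(y,y')$ as soon as $d(y,y')<1$. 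On $\{\sigma=\infty\}$ one has $\E\sum_{n\ge0}d(Y_n,Y_n')\one_{\sigma=\infty}\le\sum_{n\ge0}\alpha^n d(y,y')<\infty$, so $d(Y_n,Y_n')\to0$ almost surely there; recalling that $\CD=\{(x^{(1)},x^{(2)}):d(x^{(1)}_n,x^{(2)}_n)\to0\}$, this gives $\Gamma_{y,y'}(\CD)\ge\P(\sigma=\infty)\ge1-\frac{\alpha}{1-\alpha}d(y,y')$.

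Finally, given $\eps>0$ and $x\in\X$, I would use that $d$ generates the topology to choose $\rho>0$ with $2\rho<\min\{1,(1-\alpha)\eps/\alpha\}$ and set $U=\{z\in\X:d(z,x)<\rho\}$; for $y,y'\in U$ the triangle inequality gives $d(y,y')<2\rho$, whence $\inf_{y,y'\in U}\Gamma_{y,y'}(\CD)\ge1-\frac{\alpha}{1-\alpha}\,2\rho>1-\eps$. Together with the second paragraph this is precisely the second assumption of Theorem~\ref{thm:conv} for $\cP_t$. The one genuinely delicate step is the measurable selection in the first paragraph; everything else is elementary. If one prefers to avoid Kuratowski--Ryll-Nardzewski, one can instead fix $\alpha'\in(\alpha,1)$ and select couplings with $\int d\,\dd\pi_{a,b}\le\alpha'\,d(a,b)$ (whose existence is immediate since the optimal coupling already does better), running the same argument with $\alpha$ replaced by $\alpha'$.
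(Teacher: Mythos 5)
Your argument is correct and follows essentially the same route as the paper: a Kuratowski--Ryll-Nardzewski measurable selection of contracting one-step couplings (this is exactly the paper's Lemma~\ref{lem:measure}, whose measurability verification via the Feller property is the delicate point you rightly flag), followed by iterating the selected kernel into a Markovian coupling and a supermartingale-type estimate on the first time the distance reaches $1$. The only cosmetic difference is that the paper uses the maximal inequality for the supermartingale $\tilde\alpha^{-n}d(X_n,Y_n)$ stopped at $\tau$, obtaining $\P(\tau<\infty)\le d(x,y)$, whereas your induction plus Markov and a union bound gives the slightly weaker but equally sufficient constant $\alpha/(1-\alpha)$.
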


Before we turn to the proof of Proposition~\ref{prop:contr}, we give the following result which is essential to settle the measurability questions
arising in the proof:

\begin{lemma}\label{lem:measure}
Let $\CQ$ be a Feller Markov operator on a Polish space $\X$ and let $d$ be a $[0,1]$-valued distance-like function on $\X \times \X$ which is contracting for $\CQ$. 
Then there exists $\tilde \alpha < 1$ and a Markov operator $\cT$ on $\X \times \X$ such that transition probabilities of $\cT$ are couplings of
the transition probabilities for $\CQ$ and such that the inequality
\begin{equ}
\bigl(\cT d\bigr)(x,y) \le \tilde \alpha d(x,y) \;,
\end{equ}
holds for every $(x,y)$ such that $d(x,y) < 1$.
\end{lemma}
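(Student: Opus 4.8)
The plan is to build the kernel $\cT$ by a measurable selection of (near-)optimal couplings. Fix $\alpha<1$ as in the contraction property \eref{e:contractd}. Since $d$ is lower semi-continuous, the infimum in \eref{e:liftd} is attained, so for every $(x,y)$ with $d(x,y)<1$ there is some $\pi\in\CC\bigl(\CQ(x,\cdot),\CQ(y,\cdot)\bigr)$ realising $\int d\,\dd\pi = d\bigl(\CQ(x,\cdot),\CQ(y,\cdot)\bigr)\le \alpha\, d(x,y)$. Consider then the set-valued map $F$ that assigns to each $(x,y)$ in the Borel set $S:=\{d<1\}$ the collection $F(x,y)$ of all $\pi\in\CM_1(\X\times\X)$ whose first and second marginals are $\CQ(x,\cdot)$ and $\CQ(y,\cdot)$ and such that $\int d\,\dd\pi\le \alpha\, d(x,y)$. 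By the preceding remark $F(x,y)\neq\emptyset$ for every $(x,y)\in S$, and $F(x,y)$ is closed in $\CM_1(\X\times\X)$: the set of couplings of two fixed measures is weakly closed, and $\pi\mapsto\int d\,\dd\pi$ is lower semi-continuous for the weak topology, being the increasing limit of the continuous maps $\pi\mapsto\int d_n\,\dd\pi$, where $d_n\uparrow d$ is a sequence of bounded Lipschitz functions.

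The next step is to verify that the graph of $F$ is Borel in $S\times\CM_1(\X\times\X)$, so that a measurable selection theorem applies. Picking a countable weak-convergence-determining family $(f_k)$ of bounded continuous functions on $\X$, the constraint that the first marginal of $\pi$ equals $\CQ(x,\cdot)$ is the intersection over $k$ of the zero sets of the maps $(x,\pi)\mapsto\int f_k(u)\,\pi(\dd u,\dd v) - (\CQ f_k)(x)$, each of which is jointly continuous by the Feller property of $\CQ$; similarly for the second marginal. The constraint $\int d\,\dd\pi \le \alpha\, d(x,y)$ is Borel, since $(x,y,\pi)\mapsto \int d\,\dd\pi - \alpha\, d(x,y)$ is a difference of two lower semi-continuous functions, hence Borel measurable. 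Intersecting these conditions, $\mathrm{Gr}(F)$ is Borel, so by a measurable selection theorem for closed-valued multifunctions with Borel graph (Kuratowski--Ryll-Nardzewski, or the Jankov--von Neumann uniformization theorem) there is a measurable selection $\sigma\colon S\to\CM_1(\X\times\X)$ with $\sigma(x,y)\in F(x,y)$ for all $(x,y)\in S$.

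Finally, define $\cT$ by $\cT\bigl((x,y),\cdot\,\bigr)=\sigma(x,y)$ when $d(x,y)<1$ and $\cT\bigl((x,y),\cdot\,\bigr)=\CQ(x,\cdot)\otimes\CQ(y,\cdot)$ when $d(x,y)=1$. Since $S$ and $\{d=1\}$ form a Borel partition of $\X\times\X$, since $\sigma$ is measurable, and since $(x,y)\mapsto\CQ(x,\cdot)\otimes\CQ(y,\cdot)$ is a measurable family of probability measures, $\cT$ is a Markov transition kernel on $\X\times\X$; by construction $\cT\bigl((x,y),\cdot\,\bigr)$ is a coupling of $\CQ(x,\cdot)$ and $\CQ(y,\cdot)$ for every $(x,y)$, and for $d(x,y)<1$ one has $(\cT d)(x,y)=\int d\,\dd\sigma(x,y)\le \alpha\, d(x,y)$. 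Thus the conclusion holds with $\tilde\alpha=\alpha$.

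The delicate point is the measurable selection: one must check that this ``optimal coupling'' multifunction really is closed-valued with Borel graph, which is precisely where the Feller hypothesis is used (to turn the marginal constraints into continuous, hence closed, conditions in $(x,y,\pi)$) and where the lower semi-continuity of $d$ is used twice --- to guarantee that the infimum defining $d$ on measures is attained, and to make $\pi\mapsto\int d\,\dd\pi$ lower semi-continuous so that $F$ has closed values. If one is content with a universally measurable kernel $\cT$ --- which suffices for every later use of this lemma, since $\cT$ is always integrated against some probability measure --- the Jankov--von Neumann theorem can be applied directly to the Borel graph; upgrading to a genuinely Borel selection requires a little more, e.g.\ exploiting that the optimal-coupling multifunction is upper hemicontinuous, a consequence of the tightness of the couplings $\CC(\mu_n,\nu_n)$ whenever $\mu_n\to\mu$ and $\nu_n\to\nu$ weakly.
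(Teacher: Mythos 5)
Your overall strategy --- select, measurably in $(x,y)$, a coupling $\pi\in\CC\bigl(\CQ(x,\cdot\,),\CQ(y,\cdot\,)\bigr)$ with $\int d\,\dd\pi\le\tilde\alpha\, d(x,y)$, and use the independent coupling where $d(x,y)=1$ --- is exactly the paper's. The gap is in the selection step itself. The Kuratowski--Ryll-Nardzewski theorem requires the closed-valued multifunction $F$ to be \emph{weakly measurable}, i.e.\ $F^{-1}(U)=\{(x,y)\,:\,F(x,y)\cap U\neq\emptyset\}$ must be Borel for every open $U\subset\CM(\X\times\X)$; knowing only that the graph of $F$ is Borel does not give this, since $F^{-1}(U)$ is then merely the projection of a Borel set, hence analytic and possibly non-Borel. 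So your main line (``Borel graph, hence KRN applies'') is not justified as written, and your fallback via Jankov--von Neumann produces only a universally measurable selection --- a weaker object than the Borel Markov kernel the lemma asserts (though, as you note, arguably enough for the way the lemma is consumed in Proposition~\ref{prop:contr}). The repair you gesture at --- upper hemicontinuity of the coupling multifunction via tightness --- is precisely the missing argument, and it is the actual content of the paper's proof: for \emph{closed} $U$ one shows $F^{-1}(U)$ is closed (if $(x_n,y_n)\to(x,y)$ and $\Gamma_n\in F(x_n,y_n)\cap U$, the Feller property makes $\{\Gamma_n\}$ tight, and any subsequential limit $\Gamma$ lies in $\CC\bigl(\CQ(x,\cdot\,),\CQ(y,\cdot\,)\bigr)\cap U$ with $\Gamma(d)\le\tilde\alpha\, d(x,y)$), and weak measurability then follows because every open subset of the Polish space $\CM(\X\times\X)$ is a countable union of closed sets and $F^{-1}$ commutes with unions. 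Note that this limiting step needs $\limsup_n d(x_n,y_n)\le d(x,y)$, i.e.\ upper semicontinuity of $d$ (the paper invokes continuity of $d$, which holds in the intended application); with only a lower semicontinuous $d$ your hemicontinuity claim would fail at the same point.

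Two smaller remarks. Taking $\tilde\alpha=\alpha$ via attainment of the infimum in \eref{e:liftd} is fine and marginally sharper than the paper, which fixes $\tilde\alpha\in(\alpha,1)$ and uses the slack to make $F$ nonempty on the closure of $\{d<1\}$, the set on which it runs the selection. Your verification that the marginal constraints are closed in $(x,y,\pi)$ by the Feller property, and that $F$ has closed values because $\pi\mapsto\int d\,\dd\pi$ is lower semicontinuous, is correct and coincides with the paper's; what is missing is only, but crucially, the passage from ``Borel graph'' to the weak measurability that a Borel selection theorem actually requires.
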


\begin{proof}
Denote as before by $\CM(\X \times \X)$ the set of probability measures on $\X \times \X$ endowed with the topology of weak convergence, so that it is again
a Polish space. Let $\tilde \alpha \in (\alpha, 1)$, where $\alpha$ is as in Definition~\ref{def:contr}. For every $(x,y) \in \X \times \X$, denote
\begin{equ}
F(x,y) =  
	\{ \Gamma \in \C\bigl(\CQ(x,\cdot\,),\CQ(y,\cdot\,)\bigr)\,:\, \Gamma(d) \le \tilde \alpha d(x,y)\} \;,
\end{equ}
and denote by $\Delta$ the closure in $\X \times \X$ of the set $\{d(x,y) < 1\}$. We know that $F(x,y)$ is non-empty by assumption whenever $d(x,y) < 1$.
The Feller property of $\CQ$ then ensures that this is also true for $(x,y) \in \Delta$.

The proof of the statement is complete as soon as we can show that there exists a \textit{measurable} map $\hat \cT \colon \X \times \X \to \cM(\X \times \X)$ such that 
$\hat \cT(x,y) \in F(x,y)$ for every $x,y \in \X$, since it then suffices to set for example
\begin{equ}
\cT(x,y;\cdot\,) = 
\left\{\begin{array}{cl}
	 \cT(x,y) & \text{if $(x,y) \in \Delta$,} \\
	\CQ(x,\cdot\,) \otimes \CQ(y,\cdot\,) & \text{otherwise.}
\end{array}\right.
\end{equ}
Since the set $F(x,y)$ is closed for every pair $(x,y)$ by the continuity of $d$, 
this follows from the Kuratowski, Ryll-Nardzewski selection theorem \cite{KR65,Wagner} provided we can show that, for every open set $U \subset \cM(\X \times \X)$,
the set $F^{-1}(U) = \{(x,y) \,:\, F(x,y) \cap U \neq \emptyset\}$ is measurable.

Since on a Polish space every open set is a countable union of closed sets and since $F^{-1}(U \cup V) = F^{-1}(U) \cup F^{-1}(V)$ (the same is not true 
in general for intersections!), the claim follows if we can show that $F^{-1}(U)$ is measurable for every closed set $U$. 
Under our assumptions, $F^{-1}(U)$ actually turns out to be closed if $U$ is closed. To see this, take a convergent sequence $(x_n, y_n) \in F^{-1}(U)$.
The definition of $F$ implies that there exist couplings $\Gamma_n \in \C\bigl(\CQ(x_n,\cdot\,),\CQ(y_n,\cdot\,)\bigr)$ with $\Gamma_n(d) \le \tilde \alpha d(x_n, y_n)$.
Since $\CQ$ is Feller, the sequence $\{\Gamma_n\}$ is tight, so that there exists a subsequence converging to a limit $\Gamma$. Since $\Gamma$ belongs
to $\C\bigl(\CQ(x,\cdot\,),\CQ(y,\cdot\,)\bigr)$ and since, by the continuity of $d$, we have $\Gamma(d) \le \tilde \alpha d(x,y)$, $(x,y) \in F^{-1}(U)$ as claimed.
\end{proof}

\begin{proof}[of Proposition~\ref{prop:contr}]
By assumption, there exists some $\alpha \in (0,1)$ such that $d (\cP_t(x,.),\cP_t(y,.)) \leq  \alpha d(x,y)$ for all
$x,y \in \X$ which satisfy $d(x,y)<1$. 
Consider the Markov operator $\cQ:=\cP_t$. 
Let $\cT$ be the Markov operator from Lemma~\ref{lem:measure} so that
if, for fixed $x,y \in \X$, we denote the corresponding chain starting at $(x,y)$ by $(X_n,Y_n)$, then we have 
$\E d(X_1,Y_1) \le \tilde\alpha d(x,y)$ whenever $d(x,y)<1$. Let $\Gamma_{x,y}$ be the law of $(X_n,Y_n),n \in \N_0$. Now we define
$$
V_n:=\tilde\alpha^{-n} d(X_n,Y_n)
$$
and $\tau:= \inf \{n \in \N: d(X_n,Y_n) \ge 1\}$.
Then $V_{n \wedge \tau}$, $n \in \N_0$ is a non-negative supermartingale and therefore
$$
\P\{\tau < \infty\} \le \P\{ \sup_{n \ge 0} V_{n \wedge \tau} \ge 1 \} \le d(x,y)\;,
$$
i.e. 
$$
\Gamma_{x,y} \{d(X_n,Y_n) \le \tilde\alpha^n \mbox{ for all } n \in \N_0\} \ge 1-d(x,y)\;.
$$
This shows that the second assumption in Theorem~\ref{thm:conv} is satisfied for the chain associated to $\CQ$.
\end{proof}

\section{Application of the spectral gap result to SDDEs}
\label{sec:applgap}
In this section, we apply the abstract results from the previous section to the 
problem of exponential convergence to an invariant measure for the type of stochastic delay equations
considered earlier. The main problem will turn out to be to find a distance-like function $d$ which is contracting.
In order to obtain an exponential convergence result, we will have to assume, just like in the case of Harris
chains \cite{MT,Harris} some Lyapunov structure. We therefore introduce the following assumption:

\begin{assumption}
There exists a continuous function $V \colon \CC \to \R_+$ such that $\lim_{\|X\| \to \infty} V(X) = +\infty$ and such that
there exist strictly positive constants $\CV$, $\gamma$ and $\KV$ such that the bound
\begin{equ}
\E V(X_t) \le \CV e^{-\gamma t} V(X_0) + \KV\;,
\end{equ}
holds for solutions to \eref{e:sdde2} with arbitrary initial conditions $X_0 \in \CC$. 
\end{assumption}

The distance-like function $d$ that we are going to use in this section is given by
\begin{equ}[e:defd]
d(X,Y) = 1 \wedge \delta^{-1} \|X - Y\|\;,
\end{equ}
for a suitable (small) constant $\delta$ to be determined later. We start by verifying that bounded sets are $d$-small
for every value of $\delta$ and we will then proceed to showing that under suitable assumptions,
it is possible to find $\delta > 0$ such that $d$ is also contracting.

\subsection{Bounded sets are $\mathbf{\it d}$-small}\label{sec:contrSDDE}

\begin{proposition}
Let the assumptions of Theorem \ref{thm:weakErgodicity} be satisfied, let $d$ be as in \eref{e:defd} and let $t \geq 2r$
be arbitrary. Then every bounded set is $d$-small for $\CP_t$.
\end{proposition}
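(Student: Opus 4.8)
The plan is to show that for any $R>0$, the ball $\mathrm{B}_R$ is $d$-small for $\CP_t$, i.e. that $d(\CP_t(\eta,\cdot\,),\CP_t(\widetilde\eta,\cdot\,)) \le 1-\eps$ uniformly over $\|\eta\|,\|\widetilde\eta\| \le R$, where $d$ is as in \eref{e:defd}. Since $d \le 1$ always, it suffices to exhibit a coupling of $\CP_t(\eta,\cdot\,)$ and $\CP_t(\widetilde\eta,\cdot\,)$ that puts a definite amount of mass $\eps$ on the event $\{\|X_t - \widetilde X_t\| \le \delta\}$. The natural candidate is the Girsanov-shift coupling already constructed in the proof of Theorem~\ref{thm:weakErgodicity}: run $X$ as a solution of \eref{e:sdde2} from $\eta$, and run $\widetilde X$ as the solution of the controlled equation \eref{eq:modifiedSDE} from $\widetilde\eta$ with the same Wiener process $W$, with $\lambda$ fixed as in Lemma~\ref{lem:contraction} (for $\gamma_0 = 1$, say). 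Then, using the right-inverse $g^{-1}$, the process $\widetilde X$ is, under a Girsanov change of measure with shift $v(s) = \lambda g(\widetilde X_s)^{-1}(X(s)-\widetilde X(s))$ stopped at $\tau = \inf\{t : \int_0^t |v(s)|^2\,ds \ge \eps_0^{-1}\|\eta-\widetilde\eta\|^2\}$, a genuine solution of \eref{e:sdde2} from $\widetilde\eta$. This gives a coupling of the two transition probabilities.

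The key step is the quantitative estimate. First, by Lemma~\ref{lem:contraction} we have $\E\bigl(\sup_{t\ge0} e^{t}\|Z_t\|\bigr)^8 \le (C\|Z_0\|)^8$ with $Z = X - \widetilde X$, so that (using $\|\eta - \widetilde\eta\| \le 2R$ and the uniform bound $\sup_\eta \|g^{-1}(\eta)\| =: M < \infty$) one gets
\begin{equ}
\E \int_0^\infty |v(s)|^2\,ds \le \lambda^2 M^2 \,\E \int_0^\infty \|Z_s\|^2\,ds \le \lambda^2 M^2 \,\E\Bigl(\sup_{s\ge0} e^{s}\|Z_s\|\Bigr)^2 \int_0^\infty e^{-2s}\,ds \le C' R^2\;,
\end{equ}
for a constant $C'$ depending only on $\lambda$, $M$ (and the $C$ from the lemma). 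By Chebyshev, $\P\{\tau = \infty\} = \P\bigl\{\int_0^\infty |v(s)|^2\,ds < \eps_0^{-1}\|\eta-\widetilde\eta\|^2\bigr\}$ — wait, this requires care: we want $\tau = \infty$, i.e. the accumulated cost stays below the threshold $\eps_0^{-1}\|\eta-\widetilde\eta\|^2$ forever. Since the threshold itself scales like $R^2$ while the expected total cost is also $O(R^2)$, choosing $\eps_0$ small makes the threshold large compared to the typical cost, and Chebyshev gives $\P\{\tau<\infty\} \le \eps_0 C' R^2 / \|\eta - \widetilde\eta\|^2 \cdot (\text{something})$ — this is exactly the argument sketched in the proof of Theorem~\ref{thm:weakErgodicity}, where it is noted that $\lim_{\eps_0\to0}\P\{\tau=\infty\}=1$, with the rate depending only on $R$ and not on the individual initial conditions. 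Fix $\eps_0$ small enough that $\P\{\tau=\infty\} \ge \tfrac12$, say, uniformly over $\|\eta\|,\|\widetilde\eta\| \le R$.

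Second, on the event $\{\tau=\infty\}$ we have $\overline X = \widetilde X$, and by Lemma~\ref{lem:contraction} again, $\|Z_t\| \le e^{-t}\sup_{s\ge0}e^{s}\|Z_s\|\to 0$; in particular, using Chebyshev on $\sup_{s\ge t}\|Z_s\| \le e^{-t}\sup_{s\ge0}e^s\|Z_s\|$, for $t \ge 2r$ we can make $\P\bigl(\{\tau=\infty\} \cap \{\|Z_t\| \le \delta\}\bigr) \ge \tfrac14$, say, by taking $\delta$ not too small relative to $R$ — more precisely $\P(\|Z_t\| > \delta) \le \delta^{-8}\E\|Z_t\|^8 \le \delta^{-8} e^{-8t}(2CR)^8$, which is $<\tfrac14$ once $t \ge 2r$ is large enough, but $t$ is given, so instead we absorb this by noting the statement only claims \emph{some} $\eps>0$: we do not need $\tfrac14$, we just need the coupling to place positive mass on $\{\|X_t - \widetilde X_t\| \le \delta\}$. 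Under $\widetilde\Gamma := \Law(X,\overline X)$, this mass is at least $\P\bigl(\{\tau = \infty\}\cap\{\|Z_t\|\le\delta\}\bigr) =: \eps(R,\delta,t) > 0$ (strictly positive, and bounded below uniformly over the ball by the uniformity of all constants in Lemma~\ref{lem:contraction} and in the Girsanov estimate). Hence $d(\CP_t(\eta,\cdot\,),\CP_t(\widetilde\eta,\cdot\,)) \le \widetilde\Gamma(d) \le 1 - \eps(R,\delta,t)$, which is the claim.

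The main obstacle is bookkeeping the uniformity: every constant — the $\lambda$ and $C$ in Lemma~\ref{lem:contraction}, the Girsanov threshold constant $\eps_0$, and the final $\eps$ — must be shown to depend only on $R$ (and the fixed data $t,\delta$), not on the individual points $\eta,\widetilde\eta$. This is exactly what makes a $d$-small set rather than a family of pointwise couplings, and it works because Lemma~\ref{lem:contraction} is already stated with constants independent of the initial conditions, so the bound $\|\eta-\widetilde\eta\|\le 2R$ is all that enters. A minor technical point is the measurability of $(\eta,\widetilde\eta)\mapsto\widetilde\Gamma_{\eta,\widetilde\eta}$, but this is handled exactly as in the proof of Theorem~\ref{thm:weakErgodicity}, using that $\CM(\Omega,\Omega)$ with the weak topology is Polish and that the construction is explicit.
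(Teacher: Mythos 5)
There is a genuine gap, in fact two. First, the pair $(X,\overline X)$ produced by the Girsanov construction is \emph{not} an element of $\CC\bigl(\CP_t(\eta,\cdot\,),\CP_t(\widetilde\eta,\cdot\,)\bigr)$: under the original measure $\P$ the law of $\overline X$ is only \emph{equivalent} to the law of a solution started from $\widetilde\eta$ (this is exactly how it is phrased in the proof of Theorem~\ref{thm:weakErgodicity}), i.e.\ $\Law(X,\overline X)$ lies in $\widetilde\cC$ but not in $\cC$. That weaker notion suffices for the uniqueness statement via Corollary~\ref{cor:coupling}, but the quantity you must bound here is the Wasserstein-type lift \eref{e:liftd}, whose infimum runs over genuine couplings only; so the final step $d(\CP_t(\eta,\cdot\,),\CP_t(\widetilde\eta,\cdot\,))\le\widetilde\Gamma(d)$ does not follow. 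Repairing this requires the density-corrected construction used in Section~\ref{sec:applgap} for the contraction property (the measure $\Pi$ built from $1\wedge\tilde\CD$ plus the product of the residuals), together with a uniform lower bound on the overlap of $\P$ and $\tilde\P$ over the ball --- considerably more work than you acknowledge.

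Second, even granting the coupling, your lower bound on $\P\bigl(\{\tau=\infty\}\cap\{\|X_t-\widetilde X_t\|\le\delta\}\bigr)$ is asserted rather than proved. Lemma~\ref{lem:contraction} gives $\E\|Z_t\|^8\le C^8e^{-8t}\|Z_0\|^8$ with $\|Z_0\|\le 2R$, so Chebyshev only yields something when $t\gtrsim\log(R/\delta)$; but the statement fixes an arbitrary $t\ge 2r$, and for, say, $t=2r$ and $R\gg\delta$ your estimate gives nothing, while the claimed ``strictly positive, uniformly bounded below'' $\eps(R,\delta,t)$ is left without justification. Filling this hole essentially forces you back to an irreducibility estimate of the type of Lemma~\ref{lem:support}, and that is precisely the paper's (much shorter) route: by Lemma~\ref{lem:support} one has $p:=\inf_{x\in\mathrm{B}_R}\CP_t(x,\mathrm{B}_{\delta/4})>0$; coupling the two solutions \emph{independently}, both land in $\mathrm{B}_{\delta/4}$ with probability at least $p^2$, on which event $d(X_t,Y_t)\le\tfrac12$, giving $d(\CP_t(x,\cdot\,),\CP_t(y,\cdot\,))\le 1-\tfrac12 p^2$ uniformly on the ball. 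No Girsanov shift, no binding construction, and no dependence on the ratio $R/\delta$ is needed.
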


\begin{proof}
Fix $t \geq 2r$. We show that every closed ball $\mathrm{B}_R \subset \C$ with center 0 and radius $R$ is $d$-small for $\cP_t$.
By Lemma \ref{lem:support}, we know that $p:=\inf_{x \in \mathrm{B}_R} \cP_t\big( x,\mathrm{B}_{\delta/4} \big) >0$. 
Let $x,y \in \mathrm{B}_R$ and let $X$ and $Y$ be solutions of \eref{e:sdde2} with initial conditions $x$ and $y$ respectively. 
We couple $X$ and $Y$ independently. Then
\begin{align*}
d(\cP_t(x,.),\cP_t(y,.)) &\leq \E\big(1 \wedge \big(\delta^{-1}\|X_t-Y_t\|\big)\big)\\  
&\le \P\big( \big\{X_t \notin 
\mathrm{B}_{\delta/4} \big\} \cup \big\{Y_t \notin \mathrm{B}_{\delta/4} \big\}\big)
+ \frac 12 \P\big\{ X_t \in \mathrm{B}_{\delta/4},Y_t \in \mathrm{B}_{\delta/4}\big\} \\
&\le 1-\frac 12 p^2
\end{align*}
for all $x,y \in \mathrm{B}_R$, so the claim follows.
\end{proof}

\subsection{The distance $\mathbf{\it d}$ is contracting}

Before we start, we give the following a priori estimate that shows that trajectories of \eref{e:sdde2} driven by 
\textit{the same} realisation of the noise cannot separate too rapidly. More precisely, we have:

\begin{proposition}\label{prop:apriori}
Let the assumptions of Theorem \ref{thm:weakErgodicity} be satisfied. There exists $\kappa > 0$ such that the bound
\begin{equ}
\E \|X_t - \tilde X_t\|^4 \le \e^{\kappa (1+t)^2} \|X_0 - \tilde X_0\|^4\;,
\end{equ}
holds for all $t \ge 0$ and any pair of initial conditions $X_0, \tilde X_0 \in \CC$.
\end{proposition}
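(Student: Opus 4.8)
The plan is to control $Z(t):=X(t)-\tilde X(t)$ through a Gr\"onwall argument on the fourth moment of its running supremum, using the one-sided Lipschitz hypothesis of Theorem~\ref{thm:weakErgodicity} to absorb simultaneously the drift and the quadratic variation of the martingale part. Since $X$ and $\tilde X$ are driven by \textit{the same} Wiener process, It\^o's formula applied to $|Z|^2$ gives, exactly as in the proof of Lemma~\ref{lem:contraction},
\[
\dd |Z(t)|^2 = \bigl(2\langle f(X_t)-f(\tilde X_t),\,Z(t)\rangle + \Norm{g(X_t)-g(\tilde X_t)}^2\bigr)\,\dd t + \dd M(t)\;,
\]
with $M(0)=0$ and $\dd M(t)=2\langle Z(t),(g(X_t)-g(\tilde X_t))\,\dd W(t)\rangle$. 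Writing $Z(t)=X_t(0)-\tilde X_t(0)$, applying the structural bound of Theorem~\ref{thm:weakErgodicity} with $x=X_t$, $y=\tilde X_t$, and using $2\langle a,b\rangle\le 2\langle a,b\rangle^+$ as well as $\Norm{g(x)-g(y)}^2\le K\|x-y\|^2$ (which follows from the same bound, the $f$-term being nonnegative), one gets the two differential inequalities
\[
\dd |Z(t)|^2 \le K\|Z_t\|^2\,\dd t + \dd M(t)\;,\qquad \dd\langle M\rangle_t \le 4|Z(t)|^2\,\Norm{g(X_t)-g(\tilde X_t)}^2\,\dd t \le 4K\|Z_t\|^4\,\dd t\;.
\]

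Next I would fix $T>0$, set $G(t)=\E\sup_{0\le u\le t}|Z(u)|^4$, and estimate $G$ on $[0,T]$. Integrating the first inequality, taking the supremum over $u\le t$ (note $\sup_{u\le t}M(u)\ge 0$ since $M(0)=0$), squaring, and taking expectations, Cauchy--Schwarz applied to $\int_0^t\|Z_s\|^2\,\dd s$ and Doob's $L^2$-inequality together with $\E M(t)^2=\E\langle M\rangle_t\le 4K\int_0^t\E\|Z_s\|^4\,\dd s$ yield a constant $C=C(K)$ with
\[
G(t)\le C\|Z_0\|^4 + C(1+t)\int_0^t \E\|Z_s\|^4\,\dd s\;.
\]
Since for $s\ge 0$ the window $[s-r,s]$ lies in $[-r,s]$ and $Z$ is deterministic on $[-r,0]$ with $\sup_{[-r,0]}|Z|=\|Z_0\|$, one has $\E\|Z_s\|^4\le \|Z_0\|^4+G(s)$, so that $G(t)\le C'(1+t)^2\|Z_0\|^4 + C(1+T)\int_0^t G(s)\,\dd s$ for $t\le T$. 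Gr\"onwall's inequality then gives $G(T)\le C'(1+T)^2\|Z_0\|^4\,\e^{C(1+T)T}$, whence $\E\|Z_T\|^4\le \|Z_0\|^4+G(T)\le \e^{\kappa(1+T)^2}\|Z_0\|^4$ for $\kappa$ large enough depending only on $K$, which is the claim (with $T$ renamed $t$). The quadratic-in-$t$ exponent, rather than a linear one, is forced by the Cauchy--Schwarz step on $\int_0^t\|Z_s\|^2\,\dd s$, whose integrand grows like $\|Z_s\|^2$; this appears unavoidable with this method.

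The one point requiring care — and the main, though routine, obstacle — is that the above manipulations presuppose finiteness of the fourth moments and that $M$ is a genuine martingale, neither of which is clear a priori. I would handle this by the standard localization: let $\tau_n=\inf\{t\ge 0:\|Z_t\|\ge n\}$ and carry out the entire argument with $t$ replaced by $t\wedge\tau_n$ throughout. On $[0,\tau_n]$ the process $\|Z_\cdot\|$ is bounded, so $\langle M\rangle_{\cdot\wedge\tau_n}$ is bounded and $M(\cdot\wedge\tau_n)$ is a square-integrable martingale, legitimizing every step; the constants $C,C'$ produced are independent of $n$. Since the hypotheses guarantee global existence of the solution (Remark~\ref{rem:global}), $\tau_n\uparrow\infty$ almost surely, and monotone convergence lets us pass to the limit $n\to\infty$. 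Apart from this localization, the only other slightly delicate point is the delay bookkeeping relating $\|Z_s\|$ (a supremum over $[s-r,s]$) to $\sup_{u\le s}|Z(u)|$, which we used above; the remainder is routine moment estimation.
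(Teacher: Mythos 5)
Your proof is correct and follows essentially the same route as the paper's: It\^o's formula for $|Z(t)|^2$, the one-sided Lipschitz bound to control both the drift and the quadratic variation, integration plus Cauchy--Schwarz (which produces the factor of $t$ responsible for the quadratic exponent), a maximal inequality for the martingale term (the paper invokes Burkholder--Davis--Gundy where you use Doob's $L^2$ inequality, an immaterial difference), and Gronwall. The explicit localization by the stopping times $\tau_n$ is a standard detail the paper suppresses and does not alter the argument.
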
 

\begin{proof}
The proof is similar to the argument used in the proof of Theorem~\ref{thm:weakErgodicity}. Setting $Z(t) = X(t) - \tilde X(t)$, 
we have the bound
\begin{equ}
    \dd |Z(t)|^2 = 2\scal{f(X_t)- f(\tilde X_t),Z(t)}\,\dd t +
    \Norm{g(X_t)-g(\tilde X_t)}^2 \,\dd t + \dd M(t) \leq K \|Z_t\|^2\,\dd t  + \dd M(t) \;,
\end{equ}
where $M$ is a martingale with quadratic variation process bounded by $C\int_0^t \|Z_s\|^4\,\dd s$.
Defining $M^*(t) = \sup_{s \le t} M(s)$, we thus obtain the bound
\begin{equ}
\|Z_t\|^2 \le \|Z_0\|^2 + K \int_0^t \|Z_s\|^2\,\dd s + M^*(t)\;,
\end{equ}
so that 
\begin{equs}
\E \|Z_t\|^4 &\le 3 \E \Bigl(\|Z_0\|^4 + K^2 \Bigl(\int_0^t \|Z_s\|^2\,\dd s\Bigr)^2 + \bigl(M^*(t)\bigr)^2\Bigr)\\
&\le 3 \Bigl(\|Z_0\|^4 + K^2 t \int_0^t \E \|Z_s\|^4\,\dd s + C \int_0^t \E \|Z_s\|^4\,\dd s\Bigr)\;,
\end{equs}
where we used the Burkholder-Davis-Gundy inequality \cite{RY} in order to bound the expectation of $(M^*)^2$.
The claim follows from Gronwall's lemma.
\end{proof}

In this subsection, we show one possible way of verifying that $d$ is
contracting that is suited to our problem.  This is by far not the
only one. One can check for example that the procedure followed in
\cite{Gap} allows to construct a contracting distance for the
degenerate 2D stochastic Navier-Stokes equations by using a gradient
bound on the semigroup.  A general version of this argument is
presented in Section~\ref{sec:contractSPDE} below.  For the problem at
hand, it seems however more appropriate and technically
straightforward to consider a ``binding construction'' in the
terminology of \cite{MasYou,H,MatNS}.

We fix two initial conditions $X_0, \tilde X_0 \in \C$ and consider the construction from Section~\ref{sec:uniqueness}. We fix some $\gamma_0 > 0$ and
choose $\lambda$ sufficiently large so that  the conclusion of Lemma~\ref{lem:contraction} holds.
As in the proof of Theorem~\ref{thm:weakErgodicity},
we also introduce the stopping time $\tau = \inf \{t > 0 \,:\, \int_0^t |v(s)|^2\, \dd s \ge \eps^{-1} \|X_0 - \tilde X_0\|^2\}$, where $v$ is as in the 
proof of Theorem~\ref{thm:weakErgodicity}. (Note that the value of $\eps$ is not necessarily that from Section~\ref{sec:uniqueness}, but will be determined later.)
We also define
$\tilde v$ by $\tilde v(s) = v(s) \one_{\tau > s}$.

This defines a map $\Psi$ from $\Omega:=\CC([0,\infty),\R^m)$ to itself by $\Psi(w) = w + \int_0^\cdot \tilde v(s)\,\dd s$ (the map $\Psi$ furthermore depends
on the initial conditions $X_0$ and $\tilde X_0$, but we suppress this dependence from the notation). The image $\tilde \P$
of Wiener measure $\P$ under $\Psi$ has a density $\CD(w) = d\tilde \P / d\P$.

The aim of introducing the cutoff is that if we define $\tilde \CD(w) = 1/\CD(w)$,
we obtain ``for free'' bounds of the type
\begin{equ}
\int (1-\CD(w))^2\,\P(\dd w) \le C\eps^{-1} \|X_0 - \tilde X_0\|^2\;,\qquad
\int (1-\tilde \CD(w))^2\,\tilde \P(\dd w) \le C\eps^{-1} \|X_0 - \tilde X_0\|^2\;,
\end{equ}
for some constant $C>0$, provided that we restrict ourselves to pairs of initial conditions such that
\begin{equ}[e:boundX0Y0]
\|X_0 - \tilde X_0\|^2 \le \eps\;.
\end{equ}
Had we not introduced the cut-off, we would need to get exponential integrability of $v$ first.

The map $\Psi$ allows to construct, for any two initial conditions $X_0$ and $\tilde X_0$, 
a coupling for $\P$ with itself in the following way. Define the map $\tilde \Psi \colon \Omega \to \Omega\times \Omega$
by $\tilde \Psi(w) = (w,\Psi(w))$, denote by $\pi^i$ the projection onto the $i$th component of $\Omega \times \Omega$,
and set
\begin{equs}
\Pi_0(\dd w_1,\dd w_2) &= (1\wedge \tilde \CD(w_2)) \bigl(\tilde \Psi\push\P\bigr)(\dd w_1, \dd w_2)\;, \\
\Pi(\dd w_1, \dd w_2) &= \Pi_0(\dd w_1, \dd w_2) + Z^{-1} (\P - \pi^1\push\Pi_0)(\dd w_1)(\P - \pi^2\push\Pi_0)(\dd w_2)\;,
\end{equs}
\begin{wrapfigure}{r}{4cm}
\includegraphics{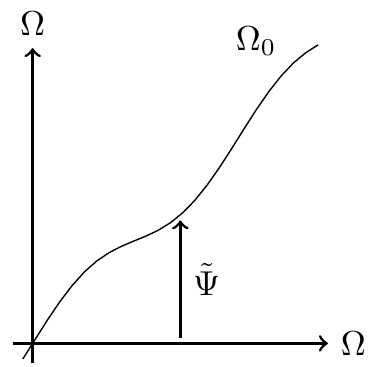}
\end{wrapfigure}
where $Z = 1 - \Pi_0(\Omega\times \Omega) = {1\over 2}\|\P - \tilde \P\|_{\TV}$ is a suitable constant. One can check that $\Pi$
as defined above is a coupling for $\P$ and $\P$. Furthermore, it is designed in such a way that it maximises
the mass of the set $\Omega_0 = \{(w,w')\,:\, w' = \Psi(w)\}$. We claim that this coupling is designed in such a way 
that its image under the product solution map of \eref{e:sdde2} allows to verify that $d$ as in \eref{e:defd} is contracting 
for some sufficiently small value of $\delta > 0$ to be determined later. Note that, since 
the bound \eref{e:contract} only needs to be checked for pairs of initial conditions with $d(X_0,\tilde X_0) < 1$, the constraint
\eref{e:boundX0Y0} is satisfied provided that we make sure that $\delta^2 \le \eps$.

In order to see that this is indeed the case, we fix a terminal time $t$ and we break the space 
$\Omega \times \Omega = \Omega_1 \cup \Omega_2 \cup \Omega_3$ into three parts:
\begin{equs}
\Omega_1 &= \bigl\{(w,w')\,:\, w' = \Psi(w)\;\&\; \tau(w) \ge t\bigr\} \;,\\
\Omega_2 &= \bigl\{(w,w')\,:\, w' = \Psi(w)\;\&\; \tau(w) < t \bigr\} \;,\\
\Omega_3 &= \bigl\{(w,w')\,:\, w' \neq \Psi(w)\bigr\} \;.
\end{equs}
Here, we made use of the stopping time $\tau$ defined at the beginning of this section.
We consider the set $\Omega_1$ as being the set of ``good'' realisations and we will show that $\Omega_1$ has
high probability. The contributions from the other two sets will be considered as error terms.

Consider now a pair $(X_0, \tilde X_0)$ of initial conditions such that $d(X_0, \tilde X_0) < 1$, which is to say that
$\|X_0 - \tilde X_0\| < \delta$. Denote by $X_t$ and $\tilde X_t$ the solutions to \eref{e:sdde2} driven by the noise realisations
$w$ and $w'$ respectively. We then have
\begin{equs}
\int_{\Omega_1} d(X_t(w), \tilde X_t(w'))\, \Pi(\dd w, \dd w') &\le \delta^{-1} \int_{\Omega_1} \|X_t(w) - \tilde X_t(w')\|\, \Pi(\dd w, \dd w') \\
&\le  \delta^{-1} \int_{\Omega} \|X_t(w) - \tilde X_t(\Psi(w))\|\, \P(\dd w) \le C\delta^{-1} \e^{-\gamma_0 t} \|X_0 - \tilde X_0\| \\
&= C\e^{-\gamma_0 t} d(X_0, \tilde X_0)\;,
\end{equs}
where we made use of the bounds obtained in Lemma \ref{lem:contraction}. Regarding the integral over $\Omega_2$,
we combine Lemma \ref{lem:contraction}, Proposition~\ref{prop:apriori}  and the strong Markov property
to conclude that
\begin{equs}
\int_{\Omega_2} d(X_t(w), \tilde X_t(w'))\, \Pi(\dd w, \dd w') &\le \delta^{-1} \int_{\tau < t} \|X_t(w) - \tilde X_t(\Psi(w))\|\, \P(\dd w) \\
&\le \delta^{-1} \Bigl(\int_{\tau < t} \|X_t(w) - \tilde X_t(\Psi(w))\|^2\, \P(\dd w)\Bigr)^{1/2} \sqrt{\P(\tau < t)}\\
&\le \delta^{-1} \E \Bigl(C \e^{-\gamma_0 \tau} \e^{\kappa (1+t-\tau)^2}\Bigr)\,\|X_0 - \tilde X_0\| \sqrt{\P(\tau < t)}\\\
&\le C\e^{\kappa (1+t)^2} d(X_0, \tilde X_0)\sqrt{\P(\tau < t)}\;.
\end{equs}
At this stage, we combine Lemma \ref{lem:contraction} with Chebychev to conclude that 
\begin{equ}
\P(\tau < t) \le \P \Bigl(\int_0^\infty |v(s)|^2\, \dd s \ge \eps^{-1} \|X_0 - \tilde X_0\|^2\Bigr) \le C\eps\;,
\end{equ}
for some constant $C$ independent of $t$ and the pair $(X_0, \tilde X_0)$. Finally, we obtain the bound
\begin{equs}
\int_{\Omega_3} d(X_t(w), \tilde X_t(w'))\, \Pi(\dd w, \dd w') &\le \Pi(\Omega_3) = \int_\Omega (1 - 1\wedge \tilde D(w))\,\tilde \P(\dd w) \\
&= \int_\Omega \bigl(0 \vee (1-\tilde D(w)) \bigr)\,\tilde \P(\dd w) \le \Bigl(\int_\Omega (1-\tilde D(w))^2\,\tilde \P(\dd w) \Bigr)^{1/2} \\
&\le C\eps^{-1/2} \|X_0 - \tilde X_0\| \le C\delta  \eps^{-1/2}  d(X_0, \tilde X_0)\;.
\end{equs}
The required bound follows by first taking $\eps$ small enough and then taking $\delta$ small enough.

\subsection{Construction of contracting distances for SPDEs}
\label{sec:contractSPDE}

Finally, we want to show how the existence of a contracting distance for a Markov semigroup
$\CP_t$ can be verified in the case of stochastic PDEs. This is very similar to the calculation performed in \cite{Gap},
but it has the advantage of not being specific to the Navier-Stokes equations. Recall that \cite{Ergodic} yields
conditions under which the Markov semigroup (over some separable Hilbert space $\CH$) 
generated by a class of stochastic PDEs satisfies the following gradient bound
for every function $\phi \in \CC^1(\CH, \R)$:
\begin{equ}[e:gradient]
\|D \CP_t \phi(X)\| \le W(X) \Bigl(\e^{-\tilde \gamma t} \sqrt{\bigl(\CP_t \|D\phi\|^2\bigr)(X)} + C\|\phi\|_\infty \Bigr)\;.
\end{equ}
Here, $W \colon \CH \to \R_+$ is some continuous function that controls the regularising properties of $\CP_t$ and
$\tilde \gamma, C$ are some strictly positive constants. It turns out that if the semigroup $\CP_t$ has sufficiently good
dissipativity properties with respect to $W$, then one can find a contracting distance function for it. 

Before we state the result, let us define a family of ``weighted metrics'' $\rho_p$ on $\CH$ by
\begin{equ}
\rho_p(X,Y) = \inf_{\gamma \colon X \to Y} \int_0^1 W^p(\gamma(t))\,\|\dot\gamma(t)\|\,\dd t\;,
\end{equ}
where the infimum runs over all Lipschitz continuous paths $\gamma \colon [0,1] \to \CH$ connecting $X$ to $Y$.
With this notation at hand, we have:

\begin{proposition}
Let $\{\CP_t\}_{t \ge 0}$ be a Markov semigroup over a separable Hilbert space $\CH$ satisfying the bound \eref{e:gradient}
for some continuous function $W\colon \CH \to [1,\infty)$. Assume furthermore that there exists $p > 1$, a time $t_\star > 0$ and
a constant $\tilde C > 0$ such that the bound
\begin{equ}[e:superLyap]
\CP_t W^{2p} \le \tilde C W^{2p-2}\;,
\end{equ}
holds for every $t \ge t_\star$. (In other words, $W$ is a kind of super-Lyapunov function for $\CP_t$.) Then, there exists
$\delta > 0$ and $T>0$ such that the metric $d(X,Y) = 1 \wedge \delta^{-1} \rho_p(X,Y)$ is contracting for $\CP_T$.
\end{proposition}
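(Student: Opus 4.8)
The plan is to establish the contraction property \eref{e:contractd} for $\CP_T$, with $T$ and $\delta$ to be chosen at the end, directly from the Kantorovich--Rubinstein duality. Since $W \ge 1$ is continuous, the length functional $\rho_p$ is a genuine metric that is finite and induces the norm topology (because $\|X - Y\| \le \rho_p(X,Y)$ and, $W$ being continuous, $\rho_p$ is bounded by a multiple of $\|X - Y\|$ on bounded sets), hence so is the distance-like function $d = 1 \wedge \delta^{-1}\rho_p$, and the lifted functional \eref{e:liftd} is exactly the associated Wasserstein-$1$ distance, so that
\begin{equ}
d\bigl(\CP_T(X,\cdot\,),\CP_T(Y,\cdot\,)\bigr) = \sup\bigl\{\CP_T\phi(X) - \CP_T\phi(Y)\ :\ \text{$\phi$ is $1$-Lipschitz for $d$}\bigr\}\;.
\end{equ}
Any such $\phi$ has oscillation at most $1$ (as $d \le 1$), so after subtracting a constant we may take $\|\phi\|_\infty \le 1$; furthermore, comparing $\phi$ along short straight segments based at an arbitrary point $Z$ and letting their length tend to $0$ shows that, wherever $\phi$ is differentiable, $\|D\phi(Z)\| \le \delta^{-1}W^p(Z)$. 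I would therefore reduce, by a standard approximation argument as in \cite{Gap}, to bounded $\CC^1$ test functions obeying this pointwise gradient bound, for which one may then invoke \eref{e:gradient}.

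For the core estimate, fix $X, Y$ with $d(X,Y) < 1$, i.e. $\rho_p(X,Y) < \delta$, fix $t \ge t_\star$, and pick an almost length-minimising Lipschitz path $\gamma \colon [0,1] \to \CH$ from $X$ to $Y$, so that $\int_0^1 W^p(\gamma(s))\,\|\dot\gamma(s)\|\,\dd s \le \rho_p(X,Y) + \eps'$. Combining $\|D\phi\| \le \delta^{-1}W^p$ with the super-Lyapunov bound \eref{e:superLyap} gives $\bigl(\CP_t\|D\phi\|^2\bigr)(Z) \le \delta^{-2}\tilde C\,W^{2p-2}(Z)$, so that \eref{e:gradient} and $\|\phi\|_\infty \le 1$ yield $\|D\CP_t\phi(Z)\| \le e^{-\tilde\gamma t}\delta^{-1}\sqrt{\tilde C}\,W^p(Z) + C\,W(Z)$. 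Since $\CP_t\phi \in \CC^1$, integrating this along $\gamma$ and using $W \le W^p$ (valid because $W \ge 1$) gives
\begin{equs}
\bigl|\CP_t\phi(X) - \CP_t\phi(Y)\bigr| &\le \int_0^1 \|D\CP_t\phi(\gamma(s))\|\,\|\dot\gamma(s)\|\,\dd s \\
&\le \bigl(e^{-\tilde\gamma t}\delta^{-1}\sqrt{\tilde C} + C\bigr)\int_0^1 W^p(\gamma(s))\,\|\dot\gamma(s)\|\,\dd s \\
&\le \bigl(e^{-\tilde\gamma t}\delta^{-1}\sqrt{\tilde C} + C\bigr)\bigl(\rho_p(X,Y) + \eps'\bigr)\;.
\end{equs}
Letting $\eps' \downarrow 0$, taking the supremum over $\phi$, and using that $d(X,Y) = \delta^{-1}\rho_p(X,Y)$ on the set $\{\rho_p(X,Y) < \delta\}$, this becomes
\begin{equ}
d\bigl(\CP_t(X,\cdot\,),\CP_t(Y,\cdot\,)\bigr) \le \bigl(e^{-\tilde\gamma t}\sqrt{\tilde C} + C\delta\bigr)\,d(X,Y)\;.
\end{equ}
It then suffices to fix $\delta$ so small that $C\delta \le \tfrac13$ and afterwards $T \ge t_\star$ so large that $e^{-\tilde\gamma T}\sqrt{\tilde C} \le \tfrac13$; with these choices $d$ is contracting for $\CP_T$ with $\alpha = \tfrac23$.

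I expect the only genuinely delicate point to be the reduction to $\CC^1$ test functions: the gradient bound \eref{e:gradient} is available only for $\phi \in \CC^1(\CH,\R)$, whereas the Kantorovich supremum naturally ranges over merely $d$-Lipschitz functions, so one must approximate a bounded $d$-Lipschitz $\phi$ by $\CC^1$ functions $\phi_n$ whose sup-norms and whose \emph{pointwise} gradient bounds $\|D\phi_n(Z)\| \le (1 + o(1))\,\delta^{-1}W^p(Z)$ are asymptotically retained --- an infinite-dimensional smoothing step (for instance a Moreau--Yosida regularisation followed by a mollification exploiting the smooth norm of $\CH$), carried out in an analogous setting in \cite{Gap}; the spurious factors $1 + o(1)$ do not affect the conclusion after passing to the limit. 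The remaining ingredients --- finiteness of $\rho_p$ and its topological equivalence with the norm, existence of almost length-minimising paths, and the identity $\tfrac{\dd}{\dd s}\CP_t\phi(\gamma(s)) = \scal{D\CP_t\phi(\gamma(s)),\dot\gamma(s)}$ --- are routine.
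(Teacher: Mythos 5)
Your argument is correct and takes essentially the same route as the paper: Monge--Kantorovich duality, the pointwise bound $\|D\phi\| \le \delta^{-1}W^p$ for $d$-Lipschitz test functions, the combination of \eref{e:gradient} with \eref{e:superLyap} to bound $\|D\CP_T\phi\|$ by a multiple of $W^p$ (using $W \le W^p$), and integration along near-minimising paths, followed by taking $T$ large and $\delta$ small. The only difference is that you make explicit the approximation reducing arbitrary $d$-Lipschitz test functions to $\CC^1$ ones, a point the paper subsumes into its appeal to duality without further comment.
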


\begin{proof}
By Monge-Kantorowitch duality, it is sufficient to show that there exist $T>0$ and $\delta>0$ such that the bound
\begin{equ}[e:wanted]
|\CP_T \phi(X) - \CP_T \phi(Y)| \le {\rho_p(X,Y) \over 2\delta}\;,
\end{equ} 
holds for every $\CC^1$ function $\phi\colon \CH \to \R$ which has Lipschitz constant $1$ with respect to $d$.
Note now that such a function $\phi$ satisfies
\begin{equ}
|\phi(X)| \le {1\over 2}\;,\qquad \|D\phi(X)\| \le {W^p(X)\over \delta}\;.
\end{equ}
In particular, it follows from the gradient bound \eref{e:gradient} combined with \eref{e:superLyap} that
for $T \ge t_\star$, one has
\begin{equ}
\|D\CP_T \phi(X)\| \le W(X) \Bigl(\delta^{-1} e^{-\tilde \gamma T} \sqrt{\tilde C} W^{p-1}(X) + {C\over 2} \Bigr)\;.
\end{equ}
Choosing $T$ sufficiently large and $\delta$ sufficiently small, we see that it is possible to ensure that
\begin{equ}
\|D\CP_T \phi(X)\| \le {W^p (X) \over 2 \delta}\;.
\end{equ}
Since, on the other hand, for any path $\gamma$ connecting $X$ to $Y$ we have
\begin{equ}
|\CP_T \phi(X) - \CP_T \phi(Y)| \le \int_0^1 \|D\CP_T\phi(\gamma(s))\| \,\|\dot \gamma(s)\|\,\dd s\;,
\end{equ}
the requested bound \eref{e:wanted} follows at once.
\end{proof}

\end{document}